\newtheorem{theorem}{Theorem}
\newtheorem{lemma}[theorem]{Lemma}
\newtheorem{claimn}[theorem]{Claim}
\newtheorem{corollary}[theorem]{Corollary}
\newcommand*\patchAmsMathEnvironmentForLineno[1]{%
  \expandafter\let\csname old#1\expandafter\endcsname\csname #1\endcsname
  \expandafter\let\csname oldend#1\expandafter\endcsname\csname end#1\endcsname
  \renewenvironment{#1}%
     {\linenomath\csname old#1\endcsname}%
     {\csname oldend#1\endcsname\endlinenomath}}%
\newcommand*\patchBothAmsMathEnvironmentsForLineno[1]{%
  \patchAmsMathEnvironmentForLineno{#1}%
  \patchAmsMathEnvironmentForLineno{#1*}}%
\newcommand{\Ss}{{\mathcal S}}
\newcommand{\ntt}{n^t}
\newcommand{\nss}{n^s}
\newcommand{\nt}[1]{n^t_#1} 
\newcommand{\ns}[1]{n^s_#1} 
\newcommand{\Sum}[1]{n^s_#1+n^t_#1} 
\newcommand{\ourURL}{\url{http://orion.math.iastate.edu/lidicky/pub/9cyc/}}
\title{$3$-coloring triangle-free planar graphs with a precolored $9$-cycle}
\author{
Ilkyoo Choi$^{1}$\thanks{Department of Mathematics, Hankuk University of Foreign Studies, Yongin-si, Gyeonggi-do, Republic of Korea, E-mail: {\tt mailto:ilkyoo@hufs.ac.kr}}
\and
Jan Ekstein$^{2}$\thanks{University of West Bohemia, Czech Republic, E-mail: {\tt ekstein@kma.zcu.cz}}
\and
P\v{r}emysl Holub$^{2}$\thanks{University of West Bohemia, Czech Republic, E-mail: {\tt holubpre@kma.zcu.cz}}
\and
Bernard Lidick\'{y}$^{3}$\thanks{Iowa State University, USA, E-mail: {\tt lidicky@iastate.edu}.}
}
\date{\today}
\date{\today}
\begin{document}
\maketitle

\begin{abstract}
Given a triangle-free planar graph $G$ and a $9$-cycle $C$ in $G$, we characterize situations where a $3$-coloring of $C$ does not extend to a proper $3$-coloring of $G$. 
This extends previous results when $C$ is a cycle of length at most $8$.
\end{abstract}

\section{Introduction}
Given a graph $G$, let $V(G)$ and $E(G)$ denote the vertex set and the edge set of $G$, respectively. 
We will also use $|G|$ for the size of $E(G)$. 
A \emph{proper $k$-coloring} of a graph $G$ is a function $\varphi:V(G) \rightarrow \{1,2,\ldots, k\}$
such that $\varphi(u)\neq\varphi(v)$ for each edge $uv\in E(G)$.
A graph $G$ is \emph{$k$-colorable} if there exists a proper $k$-coloring of $G$, and the minimum $k$ where $G$ is $k$-colorable is the {\it chromatic number} of $G$.

Garey and Johnson~\cite{1979GaJo} proved that deciding if a graph is $k$-colorable is NP-complete even when $k=3$. 
Moreover, deciding if a graph is $3$-colorable is still NP-complete when restricted to planar graphs~\cite{1980Da}.
Therefore, even though planar graphs are $4$-colorable by the celebrated Four Color Theorem~\cite{1977ApHa,1977ApHaKo,1997RoSaSeTh}, finding sufficient conditions for a planar graph to be $3$-colorable has been an active area of research. 
A landmark result in this area is Gr{\"{o}}tzsch's Theorem~\cite{grotzsch1959}, which is the following:

\begin{theorem}[\cite{grotzsch1959}]\label{thm-grotzsch}
Every triangle-free planar graph is $3$-colorable. 
\end{theorem}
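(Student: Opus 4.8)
The plan is to derive Theorem~\ref{thm-grotzsch} from a stronger statement proved by induction on $|V(G)|$, following the method introduced by Thomassen. First I would dispose of the low-connectivity cases: it suffices to treat $2$-connected graphs, since if $v$ is a cut vertex we $3$-color the (smaller, triangle-free, planar) blocks separately and permute colors so that the color of $v$ agrees across them; likewise a vertex of degree at most $2$ can be deleted and its color recovered at the end. So we may assume $G$ is $2$-connected, hence plane with every face bounded by a cycle, and we write $C$ for the outer cycle.

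The engine is the following strengthening, proved by induction: if $G$ is a triangle-free plane graph whose outer face is bounded by a cycle $C$ with $|C|$ at most a small absolute constant, then every proper $3$-coloring of $C$ extends to a proper $3$-coloring of $G$, with the possible exception of a short explicit list of small configurations. The induction step branches on the local structure near $C$: (i) if $C$ has a chord, or $G$ has a short separating cycle, cut $G$ along it into two smaller plane graphs, $3$-color each by induction so that the colorings agree on the shared cycle, and glue; (ii) if an internal face shares a long subpath with $C$, delete or contract a few vertices to pass to a smaller instance; (iii) when neither reduction applies, $C$ is chordless and the faces meeting $C$ are short, which pins down the structure enough that one can either color a degree-$2$ vertex of $C$ last, or identify two suitably chosen non-adjacent vertices of $C$ — taking care that the identification creates no triangle — and apply induction to the resulting smaller graph.

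Grötzsch's theorem then follows by minimality. Let $G$ be a counterexample with fewest vertices; by the first paragraph it is $2$-connected with minimum degree at least $3$. A short separating cycle would let us $3$-color the ``inside'' and ``outside'' pieces and merge them via the strengthened statement, so $G$ has none; hence every $4$- and $5$-cycle of $G$ bounds a face. If the outer cycle is short we precolor it, avoiding the finitely many exceptional configurations, and extend; otherwise an Euler-formula count exploiting triangle-freeness yields a short separating cycle or a reducible configuration near the outer face, contradicting minimality. The main obstacle is calibrating the strengthened statement: its hypotheses (the bound on $|C|$, whether a subpath of $C$ may be precolored, which short cycles must be facial) must be weak enough to survive every splitting and identification step, while the list of genuine exceptions — precolorings that truly do not extend — must be identified \emph{exactly}, since a missing exception collapses the induction and a superfluous one makes it too weak to apply. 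Verifying that the identifications in step (iii) never introduce a triangle, and that the two glued sides in step (i) agree on their common cycle, is where essentially all the care is needed.
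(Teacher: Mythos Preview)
The paper does not prove Theorem~\ref{thm-grotzsch}; it is quoted as a background result with a citation to Gr\"otzsch~\cite{grotzsch1959}, and the only remark the paper makes about its proof is that Gr\"otzsch in fact showed every proper $3$-coloring of a facial $4$- or $5$-cycle extends. So there is no ``paper's own proof'' to compare your proposal against.

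As for the proposal itself: what you have written is a plausible \emph{outline} of the Thomassen-style short proof, but it is not a proof. The crucial inductive strengthening is left entirely unspecified (``$|C|$ at most a small absolute constant'', ``a short explicit list of small configurations''), and the whole difficulty of Gr\"otzsch's theorem lies precisely in pinning down that statement so that every reduction step preserves its hypotheses. In the actual argument the strengthening is concrete: every precoloring of an outer $4$- or $5$-cycle extends, and in Thomassen's version one works with a precolored path of length~$2$ on the outer face together with a list assignment. Your step~(iii) in particular---``identify two suitably chosen non-adjacent vertices of $C$''---is where the real work happens, and nothing you wrote explains which vertices, why no triangle appears, or why the resulting graph still satisfies the inductive hypothesis. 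Similarly, the ``Euler-formula count'' in your final paragraph is asserted rather than carried out. If you intend this as a proof rather than a sketch of where a proof might be found, you would need to state the exact inductive hypothesis and verify each reduction against it.
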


We direct the readers to a nice survey by Borodin~\cite{2013Bo} for more results and conjectures regarding $3$-colorings of planar graphs. 

A graph $G$ is \emph{$k$-critical} if it is not $(k-1)$-colorable but every proper subgraph of $G$ is $(k-1)$-colorable. 
Critical graphs are important since they are (in a certain sense) the minimal obstacles in reducing the chromatic number of a graph. 
Numerous coloring algorithms are based on detecting critical subgraphs.
Despite its importance, there is no known characterization of $k$-critical graphs when $k\geq 4$. 
On the other hand, there has been some success regarding $4$-critical planar graphs. 
Extending Theorem~\ref{thm-grotzsch}, the Gr\"unbaum--Aksenov Theorem~\cite{1974Ak,1997Bo,1963Gr} states that a planar graph with at most three triangles is $3$-colorable, and we know that there are infinitely many $4$-critical planar graphs with four triangles. 
Borodin, Dvo\v r\'ak, Kostochka, Lidick\'y, and Yancey~\cite{00BoDvKoLiYa} were able to characterize all $4$-critical planar graphs with four triangles.

Given a graph $G$ and a proper subgraph $C$ of $G$, we say $G$ is \emph{$C$-critical for $k$-coloring} if for every proper subgraph $H$ of $G$ where $C\subseteq H$, there exists a proper $k$-coloring of $C$ that extends to a proper $k$-coloring of $H$, but does not extend to a proper $k$-coloring of $G$.
Roughly speaking, a $C$-critical graph for $k$-coloring is a minimal obstacle when trying to extend a proper $k$-coloring of $C$ to a proper $k$-coloring of the entire graph. 
Note that $(k+1)$-critical graphs are exactly the $C$-critical graphs for $k$-coloring with $C$ being the empty graph.

In the proof of Theorem~\ref{thm-grotzsch}, Gr\"otzsch actually proved that any proper coloring of a $4$-cycle or a $5$-cycle extends to a proper $3$-coloring of a triangle-free planar graph. 
This implies that there are no triangle-free planar graphs that are $C$-critical for $3$-coloring when $C$ is a face of length $4$ or $5$. 
This sparked the interest of characterizing triangle-free planar graphs that are $C$-critical for $3$-coloring when $C$ is a face of longer length. 
Since we deal with $3$-coloring triangle-free planar graphs in this paper, from now on, we will write ``$C$-critical'' instead of ``$C$-critical for $3$-coloring'' for the sake of simplicity.

The investigation was first done on planar graphs with girth $5$.
Walls~\cite{1999Wa} and Thomassen~\cite{2003Th} independently characterized $C$-critical planar graphs with girth $5$ when $C$ is a face of length at most $11$. 
The case when $C$ is a $12$-face was initiated in~\cite{2003Th}, but a complete characterization was given by Dvo\v r\'ak and Kawarabayashi in~\cite{00DvKa}.
Moreover, a recursive approach to identify all $C$-critical planar graphs with girth $5$ when $C$ is a face of any given length is given in~\cite{00DvKa}.
Dvo\v r\'ak and Lidick\'y~\cite{14DvLiSurf} implemented the algorithm from~\cite{00DvKa} and used a computer to generate all $C$-critical graphs with girth $5$ when $C$ is a face of length at most $16$.
The generated graphs were used to reveal some structure of $4$-critical graphs on surfaces without short contractible cycles.
It would be computationally feasible to generate graphs with girth $5$ even when $C$ has length greater than 16.

The situation for planar graphs with girth $4$, which are triangle-free planar graphs, is more complicated since the list of $C$-critical graphs is not finite when $C$ has size at least 6.
We already mentioned that there are no $C$-critical triangle-free planar graphs when $C$ is a face of length $4$ or $5$. 
An alternative proof of the case when $C$ is a $5$-face was given by Aksenov~\cite{1974Ak}.
Gimbel and Thomassen~\cite{1997GiTh} not only showed that there exists a $C$-critical triangle-free planar graph when $C$ is a $6$-face, but also characterized all of them. 
A \emph{$k^-$-cycle, $k^+$-cycle} is a cycle of length at most $k$, at least $k$, respectively.  A cycle $C$ in a graph $G$ is \emph{separating} if $G-C$ has more connected components than $G$.
\begin{theorem}[Gimbel and Thomassen~\cite{1997GiTh}]\label{thm-gimbel}
Let $G$ be a connected triangle-free plane graph with outer face bounded by a $6^-$-cycle $C=c_1c_2\cdots$.
The graph $G$ is $C$-critical if and only if $C$ is a $6$-cycle, all internal faces of $G$ have length exactly four
and $G$ contains no separating $4$-cycles.  Furthermore, if $\varphi$ is a $3$-coloring of $C$
that does not extend to a $3$-coloring of $G$, then $\varphi(c_1)=\varphi(c_4)$, $\varphi(c_2)=\varphi(c_5)$, and $\varphi(c_3)=\varphi(c_6)$.
\end{theorem}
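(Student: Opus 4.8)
The plan is to prove the equivalence together with the ``Furthermore'' clause, by induction on $|V(G)|+|E(G)|$. Two reductions come first. A cut vertex of $G$ would split off a block meeting $C$ in a single vertex, over which every colour of that vertex extends by Theorem~\ref{thm-grotzsch}; so a $C$-critical $G$ is $2$-connected and all its internal faces are bounded by cycles. And since, as recalled in the introduction, every proper $3$-colouring of a $4$- or $5$-cycle extends to any triangle-free plane graph it bounds, no such graph is $C$-critical when $|C|\le 5$; hence we may assume $C=c_1\cdots c_6$ is a $6$-cycle (and, after peeling off a chord of $C$ if present, that $C$ is induced).

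Two ingredients then drive the argument. The first is a \emph{winding argument}: given a proper $3$-colouring $\psi$ of $G$, identify colours with $\mathbb Z_3$ and, for each oriented edge $uv$, let $\delta(uv)\in\{-1,+1\}$ be the representative of $\psi(v)-\psi(u)\bmod 3$, so $\delta(vu)=-\delta(uv)$. If every internal face of $G$ is a $4$-cycle, then around each such face the sum of its four $\delta$-values lies in $\{-4,-2,0,2,4\}$ and is divisible by $3$, hence equals $0$; since the boundary of the outer face of a plane graph is the signed sum of the bounded-face boundaries, the sum of $\delta$ around $C$ is $0$ as well. But a proper $3$-colouring of $C$ in which $\{c_1,c_4\},\{c_2,c_5\},\{c_3,c_6\}$ are all monochromatic is, up to renaming colours, $1,2,3,1,2,3$, for which this sum is $\pm 6\ne 0$; so such a colouring does not extend to $G$. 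The second ingredient is a \emph{rigidity} observation: if $G$ has outer face a $6$-cycle, all internal faces $4$-cycles, and no separating $4$-cycle, then $G$ has no proper subgraph $H\supseteq C$ with these same properties --- for any face $Q$ of such an $H$ is a $4$-cycle of $G$ whose interior, if it contained a vertex of $G$, would make $Q$ a separating $4$-cycle of $G$ (as $C$ lies outside $Q$); so every face of $H$ is already a face of $G$, whence $H=G$. Combined with the induction hypothesis (that $C$-criticality implies the structural properties, for smaller graphs), it follows that such a $G$ has no proper $C$-critical subgraph.

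The heart of the proof is the ``Furthermore'' clause, which is exactly the contrapositive of the following: if $C=c_1\cdots c_6$, all internal faces of $G$ are $4$-cycles, and $G$ has no separating $4$-cycle, then every proper $3$-colouring $\varphi$ of $C$ in which some opposite pair $\{c_i,c_{i+3}\}$ is non-monochromatic extends to $G$. I would prove this by the same induction, working in the enlarged class where the outer face is allowed to be a $4$-cycle (trivial by Theorem~\ref{thm-grotzsch}) or a $6$-cycle. Using $2$-connectedness and the absence of separating $4$-cycles, one finds a reducible configuration touching $C$ --- the unique internal $4$-face on a prescribed edge of $C$, or a degree-$2$ vertex of $C$ --- and deletes or contracts it to obtain a smaller graph $G'$ in the same class whose outer cycle has length $4$ or $6$; the no-separating-$4$-cycle hypothesis is exactly what prevents the reduction from producing a short separating cycle or a non-quadrilateral face in $G'$. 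One then transfers $\varphi$ to a colouring of the outer cycle of $G'$ that still avoids the all-monochromatic pattern, applies the induction hypothesis, and lifts the resulting colouring across the removed configuration. Arranging the reduction so that the ``some opposite pair non-monochromatic'' condition survives, and running the case analysis over which configuration occurs, is where essentially all of the difficulty lies; this is the main obstacle.

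It remains to assemble the equivalence. Suppose first that $C$ is a $6$-cycle, all internal faces of $G$ are $4$-cycles, and $G$ has no separating $4$-cycle. By the winding argument $1,2,3,1,2,3$ does not extend to $G$, so some colouring of $C$ fails to extend; and for any $e\in E(G)\setminus E(C)$ the graph $G-e$ is a proper subgraph of $G$, hence (by rigidity together with the induction hypothesis) contains no $C$-critical subgraph, so every colouring of $C$ --- in particular one failing on $G$ --- extends to $G-e$. As every proper subgraph of $G$ containing $C$ lies in such a $G-e$, it follows that $G$ is $C$-critical. Conversely, let $G$ be $C$-critical; then $G$ is $2$-connected with $C$ a $6$-cycle. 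A separating $4$-cycle $D$ would make the subgraph of $G$ drawn on and outside $D$ a proper subgraph containing $C$ to which a colouring of $C$ extends exactly when it extends to $G$ (every $3$-colouring of the $4$-cycle $D$ extends into its interior by Theorem~\ref{thm-grotzsch}), contradicting criticality. And if some internal face $F$ of $G$ had length $\ge 5$, then a $C$-critical subgraph $H$ of $G$ --- which exists since some colouring of $C$ fails to extend to $G$ --- would, by the induction hypothesis, have all internal faces $4$-cycles; but the face of $H$ containing the interior of $F$ would then be a $4$-cycle of $G$ with a vertex of $F$ strictly inside it, i.e.\ a separating $4$-cycle of $G$, a contradiction. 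Hence all internal faces of $G$ are $4$-cycles and $G$ has no separating $4$-cycle, completing the induction.
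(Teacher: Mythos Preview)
The paper does not prove this theorem: it is quoted as a result of Gimbel and Thomassen, with a remark that Dvo\v{r}\'ak and Lidick\'y later gave a simpler flow-based proof. So there is no in-paper argument to compare against; I evaluate your proposal on its own terms and against the flow framework the paper uses for the longer-cycle cases.

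Your winding argument is correct and is exactly the flow/coloring correspondence the paper sets up in Section~2: your $\delta$ is the orientation of the dual edge, and ``sum of $\delta$ around a $4$-face is $0$'' is the statement $q(f)=0$ for $4$-faces. This cleanly shows that the pattern $1,2,3,1,2,3$ cannot extend to a quadrangulation of the disk. The rigidity observation and the way you assemble the forward implication from it are also fine.

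There are, however, two genuine gaps.

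\textbf{The extension step is not proved.} You explicitly stop at ``this is the main obstacle'' for the claim that every $3$-colouring of $C$ with a non-monochromatic opposite pair extends. That claim is most of the content of the theorem. Without it, the ``Furthermore'' clause is missing, and so is the forward implication, since your rigidity argument for ``$G-e$ has no $C$-critical subgraph'' invokes the induction hypothesis, which in turn rests on this unproved step at smaller sizes. A sketch of a reduction scheme is not a proof here: producing a reducible configuration adjacent to $C$ in an arbitrary quadrangulation of a hexagon with no separating $4$-cycle, and showing that the ``some opposite pair non-monochromatic'' condition survives the reduction, is real work that must be carried out.

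\textbf{The ``all internal faces are $4$-cycles'' step is circular.} In the converse direction you write: take a $C$-critical subgraph $H$ of $G$ and apply the induction hypothesis to conclude $H$ has only $4$-faces. But $G$ is itself $C$-critical, so the minimal obstruction $H$ for a non-extending colouring may very well be $G$, and then the induction hypothesis does not apply. Nothing in your argument forces $H\subsetneq G$; your phrase ``the face of $H$ containing the interior of $F$'' presupposes $H\neq G$ without justification. In the paper's language this step is the assertion $\mathcal{S}_6=\{\emptyset\}$, which is cited from Dvo\v{r}\'ak--Kr\'al'--Thomas; it requires an independent argument (e.g.\ via Lemma~\ref{lemma5}, or via a direct structural/counting argument) and cannot be bootstrapped from the induction the way you attempt.
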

Aksenov, Borodin, and Glebov~\cite{2003AkBoGl} independently proved the case when $C$ is a $6$-face using the discharging method, and also characterized all $C$-critical triangle-free planar graphs when $C$ is a $7$-face in~\cite{2004AkBoGl}.
The case where $C$ is a $7$-face was used in~\cite{00BoDvKoLiYa}.

\begin{figure}
\center{\includegraphics{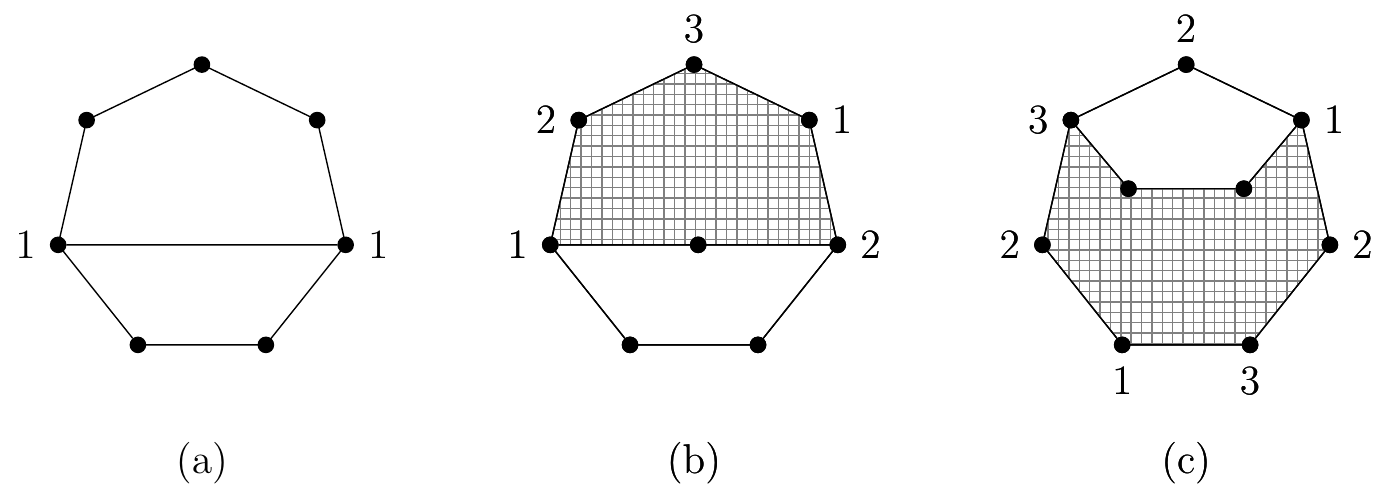}}
\caption{Critical graphs with a precolored $7$-face.}\label{fig-7cyc}
\end{figure}

\begin{theorem}[Aksenov, Borodin, and Glebov~\cite{2004AkBoGl}]\label{thm:7cycleprecise}
Let $G$ be a connected triangle-free plane graph with outer face bounded by a $7$-cycle $C=c_1\cdots c_7$.  The graph $G$ is $C$-critical
and $\psi$ is a $3$-coloring of $C$ that does not extend to a $3$-coloring of $G$ if and only if $G$ contains no separating $5^-$-cycles 
and one of the following propositions is satisfied up to relabelling of vertices (see Figure~\ref{fig-7cyc} for an illustration).
\begin{itemize}
\item[\textrm(a)] The graph $G$ consists of $C$ and the edge $c_1c_5$, and $\psi(c_1)=\psi(c_5)$.
\item[\textrm(b)] The graph $G$ contains a vertex $v$ adjacent to $c_1$ and $c_4$, the cycle $c_1c_2c_3c_4v$ bounds a $5$-face and every face drawn inside
the $6$-cycle $vc_4c_5c_6c_7c_1$ has length four; furthermore, $\psi(c_4)=\psi(c_7)$ and $\psi(c_5)=\psi(c_1)$.
\item[\textrm(c)] The graph $G$ contains a path $c_1uvc_3$ with $u,v\not\in V(C)$, the cycle $c_1c_2c_3vu$ bounds a $5$-face and every face drawn inside
the $8$-cycle $uvc_3c_4c_5c_6c_7c_1$ has length four; furthermore, $\psi(c_3)=\psi(c_6)$, $\psi(c_2)=\psi(c_4)=\psi(c_7)$, and $\psi(c_1)=\psi(c_5)$.
\end{itemize}
\end{theorem}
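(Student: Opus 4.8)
The plan is to prove both implications. For the easy (``if'') direction I would verify directly that each of (a), (b), (c) is $C$-critical with the stated non-extending $\psi$. In (a) the only proper subgraph of $G$ containing $C$ is $C$ itself; a $3$-coloring of $C$ with $\psi(c_1)=\psi(c_5)$ does not extend because of the chord $c_1c_5$, and such colorings of $C_7$ exist, so $G$ is $C$-critical. In (b) and (c) one uses the strengthening of Gr\"otzsch's theorem recorded after Theorem~\ref{thm-grotzsch} (every $3$-coloring of a $4$- or $5$-cycle bounding a face extends) to reduce the extension problem on $G$ to the quadrangulated $6$-cycle $vc_4c_5c_6c_7c_1$, where Theorem~\ref{thm-gimbel} identifies exactly the period-$3$ non-extending colorings, and to the quadrangulated $8$-cycle $uvc_3c_4c_5c_6c_7c_1$, whose non-extending colorings are governed by a short extension of Theorem~\ref{thm-gimbel} to quadrangulated $8$-cycles; criticality follows since deleting any interior edge or vertex frees up one of the $4$-, $5$-, $6$-cycles involved so that the relevant precoloring of $C$ extends.

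For the ``only if'' direction, assume $G$ is $C$-critical with outer $7$-cycle $C=c_1\cdots c_7$ and $\psi$ a $3$-coloring of $C$ that does not extend. First I would carry out the standard normalizations: $G$ may be taken $2$-connected (block argument); $G$ has no separating $5^-$-cycle (its triangle-free planar interior absorbs every $3$-coloring of the cycle by the Gr\"otzsch strengthening, so that interior could be deleted without affecting which precolorings of $C$ extend, contradicting $C$-criticality); $G$ has no interior vertex of degree at most $2$ (delete or suppress it); and $G\neq C$. The quantitative engine is the Euler-plus-discharging estimate $\sum_{f}(\mathrm{len}(f)-4)\le \mathrm{len}(C)-6$, summed over internal faces $f$, valid for every $C$-critical triangle-free plane graph (this is the technical core of~\cite{2004AkBoGl} and already underlies Theorem~\ref{thm-gimbel}). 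With $\mathrm{len}(C)=7$ this gives $\sum_{f}(\mathrm{len}(f)-4)\le 1$, so at most one internal face has length $5$ and all others have length $4$; since $7$ is odd, the parity identity $2|E(G)|=7+\sum_{f\neq C}\mathrm{len}(f)$ rules out the all-quadrilateral case, so there is exactly one $5$-face $F$.

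It then remains to determine how $F$ lies in the disk. Let $P=\partial F\cap C$; using that $G$ has no separating $5^-$-cycle and that every internal face other than $F$ is a quadrilateral, one shows $P$ is a nonempty path of some edge-length $k$ and that $\partial F$ meets $C$ only along $P$ (if $\partial F$ were disjoint from $C$, met it in a single vertex, or met it in a disconnected set, then propagating $\psi$ across the surrounding quadrangulated region and using the flexibility of proper $3$-colorings of the $5$-cycle $\partial F$ would show every $3$-coloring of $C$ extends to $G$, a contradiction). Then $F$ together with $P$ splits the disk into $F$ and a region bounded by a cycle of length $12-2k$ all of whose internal faces are quadrilaterals, and triangle-freeness forces $1\le k\le 4$. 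The cases $k=4,3,2$ yield exactly configurations (a), (b), (c): for $k=4$ the off-$C$ arc of $\partial F$ is a chord, equivalent to $c_1c_5$ after relabelling, and the complementary $4$-cycle region has no separating $4$-cycle hence is a single face; for $k=3$ the off-$C$ arc is a $2$-path through one new vertex $v$ with the complementary $6$-cycle region quadrangulated; for $k=2$ it is a $3$-path through two new vertices $u,v$ with the complementary $8$-cycle region quadrangulated. The case $k=1$ is excluded: there the complementary region is a quadrangulated $10$-gon, and by adjusting the colors on the (at least three) interior vertices of $\partial F$ one can always give its boundary coloring a winding compatible with a quadrangulation, so every $3$-coloring of $C$ extends and $G$ is not $C$-critical. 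Finally, in cases (a)--(c) one reads off when $\psi$ fails to extend: exactly when $\psi$ forces the unique bad coloring on the relevant quadrangulated $4$-, $6$-, or $8$-cycle; translating this via Theorem~\ref{thm-gimbel} and the adjacencies along $C$ gives the stated equalities among the $\psi(c_i)$ (for instance in (b) the bad coloring of $vc_4c_5c_6c_7c_1$ forces $\psi(c_4)=\psi(c_7)$ and $\psi(c_5)=\psi(c_1)$, and conversely these force the bad coloring since $\psi(c_4)\neq\psi(c_5)$).

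The main obstacle is the discharging estimate $\sum_{f}(\mathrm{len}(f)-4)\le\mathrm{len}(C)-6$ for $C$-critical triangle-free plane graphs: proving it requires exploiting $C$-criticality to forbid reducible low-degree configurations and to control chords of $C$ and vertices of small degree on $C$, and is exactly the part of~\cite{2004AkBoGl} that would have to be reproduced or invoked. The secondary difficulty is the endgame: one must establish the extension statements for quadrangulated $8$- and $10$-gons (small analogues of Theorem~\ref{thm-gimbel}) needed to handle case (c) and to discard $k=1$ and the degenerate placements of $F$, and then carefully bookkeep the several dihedral-symmetry classes of $3$-colorings of $C_7$ against the no-separating-short-cycle condition so that the list (a)--(c) is shown to be exhaustive.
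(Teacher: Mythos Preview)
The paper does not actually prove Theorem~\ref{thm:7cycleprecise}; it is quoted as a result of Aksenov, Borodin, and Glebov~\cite{2004AkBoGl}, and the surrounding text notes that Dvo\v{r}\'ak and Lidick\'y~\cite{00DvLi} later reproved it via the flow--coloring correspondence underlying Lemma~\ref{lemma5}. So there is no in-paper proof to compare against directly; one can only compare your outline to (i) the original discharging argument and (ii) the flow method this paper is built on.

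Your plan is essentially the original route: pin down the face multiset, then case-split on how the unique $5$-face meets $C$. Two comments. First, the inequality $\sum_f(|f|-4)\le |C|-6$ you invoke is not a routine Euler-plus-discharging fact; in the language of this paper it is exactly the statement $\mathcal S_7=\{\{5\}\}$ quoted from~\cite{trfree4}, and proving it from scratch is, as you acknowledge, the real work. Presenting it as ``the technical core of~\cite{2004AkBoGl}'' is fair, but it is not as elementary as the phrasing suggests. Second, your endgame needs the extension theory for quadrangulated $8$- and $10$-gons to handle case~(c) and to exclude $k=1$ and the disjoint\,/\,one-vertex\,/\,disconnected placements of the $5$-face. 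The $8$-gon case is available here as Theorem~\ref{thm:8cycleprecise}(a), but the $10$-gon step and the annular cases are genuine additional lemmas that you only gesture at; these are exactly where a sketch like this tends to hide a gap.

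The flow method of~\cite{00DvLi} takes a different shape once $S(G)=\{5\}$ is known: for a non-extending $\psi$ one applies Lemma~\ref{lemma5} to the unique $\psi$-balanced layout, obtains a short cut $K_0$ separating the $5$-face from the rest, and the linear constraints on $|K_0|$ and the source/sink counts on the two arcs of $C$ force the $5$-face to share a path of length at least~$2$ with $C$ and simultaneously determine the bad colorings. This bypasses the separate quadrangulated-polygon lemmas and the ad hoc exclusion of $k\le 1$, at the cost of importing the flow machinery. Your approach is more self-contained in spirit but front-loads more case analysis; the flow approach is more uniform and is what this paper would use.
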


Dvo\v r\'ak and Lidick\'y~\cite{00DvLi} used a correspondence of nowhere-zero flows and colorings to give simpler proofs of the case when $C$ is either a $6$-face or a $7$-face, and also characterized $C$-critical triangle-free planar graphs when $C$ is an $8$-face.
For a plane graph $G$, let $S(G)$ denote the set of multisets of  lengths of internal faces of $G$ with length at least $5$.

\begin{theorem}[Dvo\v r\'ak and Lidick\'y~\cite{00DvLi}]\label{thm:8cycleprecise}
Let $G$ be a connected triangle-free plane graph with outer face bounded by an $8$-cycle $C$.
The graph $G$ is $C$-critical
if and only if $G$ contains no separating $5^-$-cycles, the interior of every non-facial $6$-cycle contains only $4$-faces, and one of the following propositions is satisfied (see Figure~\ref{fig-8cycle} for an illustration).
\begin{itemize}
 \item[\textrm{(a)}] $S(G)=\emptyset$.
 \item[\textrm{(b)}] $S(G)=\{6\}$ and the $6$-face of $G$ intersects $C$ in a path of length at least one.
 \item[\textrm{(c)}] $S(G)=\{5,5\}$ and each of the $5$-faces of $G$ intersects $C$ in a path of length at least two.
 \item[\textrm{(d)}] $S(G)=\{5,5\}$ and the vertices of $C$ and the $5$-faces $f_1$ and $f_2$ of $G$ can be labelled
 in clockwise order along their boundaries so that $C=c_1c_2\cdots c_8$, $f_1=c_1v_1zv_2v_3$, and $f_2=zw_1c_5w_2w_3$
 (where $w_1$ can be equal to $v_1$, $v_1$ can be equal to $c_2$, etc).
\end{itemize}
\end{theorem}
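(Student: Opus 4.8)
The plan is to prove both directions, with essentially all the work in the forward direction. For the converse, for each of the configurations (a)--(d) one first checks that the $3$-coloring $\psi$ of $C$ determined by the stated identifications (an ``antipodal''-type pattern, as in Theorem~\ref{thm-gimbel}) does not extend to a proper $3$-coloring of $G$; this is a short computation, cleanest through the flow/tension reformulation. One then checks minimality: for an arbitrary proper subgraph $H$ with $C\subseteq H$, since $H$ lies inside $G-e$ or $G-v$ for some interior edge $e$ or vertex $v$, it suffices to exhibit \emph{some} $3$-coloring of $C$ that extends to $G-e$ (resp.\ $G-v$) but not to $G$, and this follows from Theorem~\ref{thm-grotzsch} together with the $6$-face and $7$-face characterizations (Theorems~\ref{thm-gimbel} and~\ref{thm:7cycleprecise}) applied to the slightly smaller instance. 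This direction is a finite, if slightly fiddly, verification.

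For the forward direction, let $G$ be $C$-critical with outer face $C=c_1\cdots c_8$. First I would carry out the standard reductions. As $G$ is triangle-free it has no separating $3$-cycle; a separating cycle of length at most $5$, or a chord of $C$, splits $G$ along that cycle or chord into triangle-free plane graphs with short outer cycles, and applying Theorem~\ref{thm-grotzsch} and Theorems~\ref{thm-gimbel}, \ref{thm:7cycleprecise} to these pieces either contradicts the criticality of $G$ or constrains $G$ to one of the listed configurations; in particular we may assume $G$ has no separating $5^-$-cycle. The same argument applied to the disk bounded by a non-facial $6$-cycle shows its interior is quadrangulated, which is the hypothesis stated in the theorem. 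After these steps $G$ is $2$-connected, $C$ is an induced cycle, and every internal face is a cycle of length at least $4$.

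The heart of the proof is a discharging argument controlling $S(G)$. Following Dvo\v r\'ak and Lidick\'y, I would pass to the dual and reinterpret ``$G$ is $C$-critical'' as a minimality statement about nowhere-zero $\mathbb{Z}_3$-flows with prescribed boundary at the dual vertex of the outer face; the absence of separating short cycles and criticality then force the dual configuration to be ``tight'' everywhere. Assigning to each internal face $f$ the charge $\ell(f)-4\ge 0$ and redistributing it toward the outer face along edges and through incident vertices according to rules dictated by that tightness, one shows the total charge left on internal faces of length $\ge 5$ is so small that $S(G)\in\{\emptyset,\{6\},\{5,5\}\}$ --- ruling out configurations such as $\{6,6\}$ or $\{5,5,5,5\}$ that the bare Euler-formula bound would still allow. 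A final local analysis treats the three survivors: $S(G)=\emptyset$ is case~(a); if $S(G)=\{6\}$ then criticality forces the $6$-face to meet $C$ in a path (otherwise the two sides could be $3$-colored independently and merged), which is case~(b); and if $S(G)=\{5,5\}$ one determines how each $5$-face meets $C$, separating the generic case~(c), where both $5$-faces meet $C$ in a path of length at least two, from the degenerate case~(d), where the two $5$-faces share the single vertex $z$, together with the list of permitted coincidences among the $v_i$, $w_i$ and $c_j$.

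The step I expect to be the main obstacle is exactly this $S(G)=\{5,5\}$ analysis: one must rule out the two $5$-faces being too far apart or meeting $C$ in too short a path, and then enumerate every degenerate overlap allowed in case~(d). Each reduction here has to respect the precoloring of $C$ and simultaneously preserve the ``no separating $5^-$-cycle'' invariant, and it is at this point --- rather than in the face-counting --- that the criticality hypothesis and the flow reformulation do the decisive work, since face lengths alone neither distinguish (c) from (d) nor pin down the location of the $5$-faces relative to $C$.
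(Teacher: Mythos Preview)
This theorem is not proved in the present paper; it is quoted from Dvo\v r\'ak and Lidick\'y~\cite{00DvLi}, so there is no proof here to compare against directly. What the paper does tell you, however, is exactly what method \cite{00DvLi} uses, and it is not the one you outline.

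The actual argument does not proceed by discharging. The possible multisets $S(G)$ are read off from the general result of Dvo\v r\'ak, Kr\'a\v{l}, and Thomas~\cite{trfree4}, which gives $\mathcal S_8=\{\emptyset,\{6\},\{5,5\}\}$ outright; no charge redistribution is needed (or used) to rule out $\{8\}$, $\{6,6\}$, etc. The substantive work is then done, for each admissible $S(G)$ and each non-extending $3$-coloring $\psi$ of $C$, by enumerating the $\psi$-balanced layouts $q$ and applying Lemma~\ref{lemma5}: each layout produces a cut path $K_0$ with endpoints on $C$, and the inequality $|n^s+m-n^t|>|K_0|$ together with Lemma~\ref{lem:inside} on the two sides of the cut yields a small system of integer constraints whose solutions are the listed configurations. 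In the $\{5,5\}$ case there are two layouts (one for each choice of which $5$-face gets $q$-value $+3$), hence two cuts, and the interaction of these two cuts is what separates (c) from (d); this is precisely the template the present paper follows for the $9$-cycle in Lemmas~\ref{l56b}, \ref{l5}, and~\ref{lall5}.

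Your proposal correctly identifies the flow reformulation as relevant, but then veers into a discharging scheme (``assigning to each internal face $f$ the charge $\ell(f)-4$ and redistributing\ldots'') that is neither in \cite{00DvLi} nor necessary, and your ``final local analysis'' never invokes the cut lemma, which is the actual engine. In particular, your explanation of how criticality forces the $5$-faces to meet $C$ in paths of length $\ge 2$, or to share a vertex $z$ as in (d), is where the multiple-layout/cut mechanism does the real work, and your sketch does not supply a substitute for it.
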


\begin{figure}
\center{\includegraphics[scale=1]{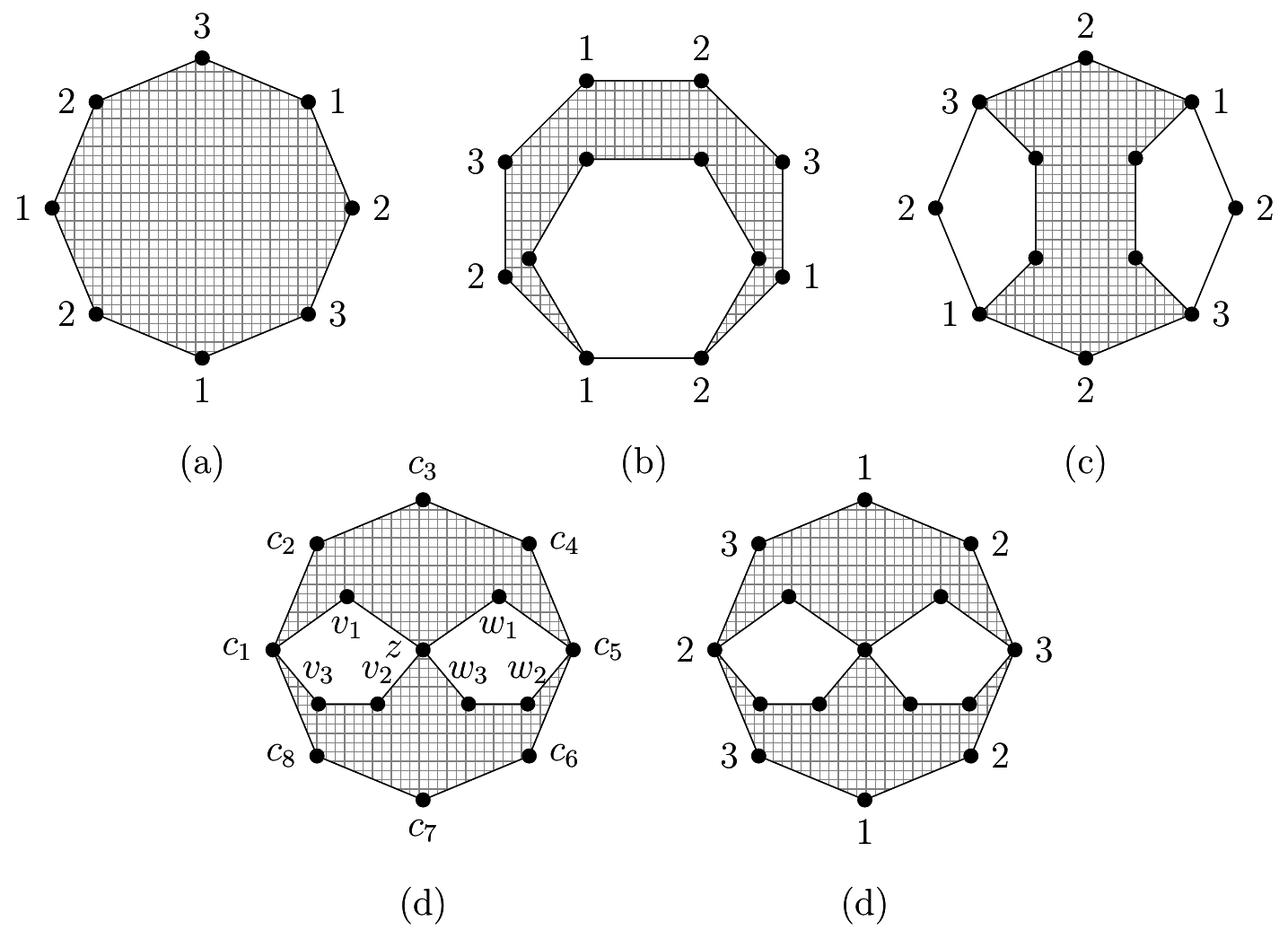}}
\caption{Graphs described by Theorem~\ref{thm:8cycleprecise} and examples of $3$-colorings of $C$ that do not extend.}\label{fig-8cycle}
\end{figure}

Theorem~\ref{thm:8cycleprecise} has the following corollary that was not explicitly stated in~\cite{00DvLi}.

\begin{corollary}[\cite{00DvLi}]\label{corr:8cycleprecise}
Let $G$ be a triangle-free plane graph and let $v$ be a vertex of degree 4 in $G$. Then there exists a proper $3$-coloring of $G$ where all neighbors of $v$ are colored the same.
\end{corollary}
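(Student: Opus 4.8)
The plan is to reduce to Theorem~\ref{thm:8cycleprecise} by ``cutting open'' the vertex $v$ into a precoloured $8$-cycle. Let $u_1,u_2,u_3,u_4$ be the neighbours of $v$ listed in the rotation around $v$, and for $i\in\{1,2,3,4\}$ (indices modulo $4$) let $f_i$ be the face of $G$ incident with the path $u_i v u_{i+1}$. Since $G$ is triangle-free, every $f_i$ has length at least $4$, and no two of $u_1,\dots,u_4$ are adjacent, as an edge $u_iu_j$ would create a triangle $vu_iu_j$. Let $G'$ be obtained from $G-v$ by adding a cycle $C=c_1c_2\cdots c_8$ with $c_1=u_1$, $c_3=u_2$, $c_5=u_3$, $c_7=u_4$, where $c_2,c_4,c_6,c_8$ are new vertices of degree~$2$, drawn one in each of $f_1,f_2,f_3,f_4$. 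A short check shows that $G'$ is again a triangle-free plane graph (the only new edges lie on $C$, and a triangle through one of the degree-$2$ vertices $c_{2k}$ would force an edge between two neighbours of $v$), and that the region formerly occupied by $v$ is now a face bounded exactly by $C$, so we may redraw $G'$ with $C$ as its outer face. A proper $3$-colouring of $C$ with $\varphi(c_1)=\varphi(c_3)=\varphi(c_5)=\varphi(c_7)$ that extends to $G'$ restricts to a proper $3$-colouring of $G-v$ in which all four neighbours of $v$ receive the same colour, and colouring $v$ with one of the remaining two colours then colours $G$ as required; conversely a colouring of $G$ of the desired form restricts to such a colouring of $C$. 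So it suffices to exhibit some such colouring of $C$ that extends to $G'$.

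Suppose no such colouring extends. In particular the colouring $\psi$ of $C$ with $\psi(c_1)=\psi(c_3)=\psi(c_5)=\psi(c_7)=1$ and $\psi(c_2)=\psi(c_4)=\psi(c_6)=\psi(c_8)=2$ does not extend to $G'$, so we may pick $G_1\subseteq G'$ with $C\subseteq G_1$ minimal subject to $\psi$ not extending to $G_1$. Then $\psi$ extends to every proper subgraph of $G_1$ containing $C$, so $G_1$ is connected, has outer face bounded by $C$, and is $C$-critical; Theorem~\ref{thm:8cycleprecise} then says that $G_1$ has no separating $5^-$-cycle, that the interior of every non-facial $6$-cycle of $G_1$ consists of $4$-faces, and that $G_1$ is of one of the types (a)--(d). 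From $G_1\subseteq G'$ we also retain that $G_1$ has no chord of $C$ between two of $c_1,c_3,c_5,c_7$, and that $c_2,c_4,c_6,c_8$ still have degree~$2$ in $G_1$. If $G_1$ is of type (a) or (b), we finish quickly: then every face of $G_1$ has even length (an outer $8$-face, and inside only $4$-faces and at most one $6$-face), so $G_1$ is bipartite; since $C$ alternates between the two colour classes, $\{c_1,c_3,c_5,c_7\}$ lies in one class and $\{c_2,c_4,c_6,c_8\}$ in the other, and one of the two proper $2$-colourings induced by the bipartition is exactly $\psi$ --- contradicting that $\psi$ does not extend to $G_1$.

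The remaining task, which I expect to be the crux, is to rule out types (c) and (d), where $G_1$ has exactly two $5$-faces and is not bipartite. Using that $c_2,c_4,c_6,c_8$ have degree~$2$ and that $c_1,c_3,c_5,c_7$ are pairwise non-adjacent, one checks that a $5$-face of $G_1$ cannot meet $C$ in a path of length $3$ or $4$; hence in type (c) each $5$-face meets $C$ exactly in a path $c_{2j-1}c_{2j}c_{2j+1}$ and closes up through two interior vertices, and in type (d) the two $5$-faces share the distinguished vertex $z$ and touch $C$ only at $c_1$ and $c_5$ (or degenerate to the previous shape). I would then first replace $G$ by a counterexample with $|E(G)|$ minimum: a routine argument that glues along a shortest separating cycle and invokes Gr\"otzsch's extension theorem for $4$- and $5$-cycles shows that such a $G$ has no separating $5^-$-cycle. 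Reinserting $v$ turns a $5$-face of $G_1$ of the above shape into a $5$-cycle of $G$ through two consecutive edges at $v$, which is then forced to bound $f_j$, so that $f_j$ is a $5$-face with a prescribed local structure; and every further edge of $G_1$ in the region between $C$ and the two $5$-faces combines with $v$ into a short cycle through $v$ that is either facial or a separating $4^-$- or $5$-cycle, the latter being excluded. Tracing these cycles pins $G_1$ down to a short list of configurations, and for each I would show that either $G_1$ cannot be drawn inside $G'$ or $\psi$ already extends to $G_1$, in both cases contradicting the choice of $G_1$, and hence the existence of a counterexample. The main obstacle is this finite case analysis, together with the careful bookkeeping of short separating cycles as one passes among $G$, $G'$, and $G_1$ --- in particular, checking that the minimal-counterexample reduction survives the degenerate case in which $v$ itself lies on a separating cycle.
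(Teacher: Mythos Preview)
Your construction --- replacing $v$ by an $8$-cycle $C$ to form $G'$ and reducing to the extendability of the two-colouring of $C$ --- is exactly what the paper sketches; the paper records nothing beyond this construction and a pointer to~\cite{00DvLi}, so you have already written more than the paper does here. Your bipartite argument disposing of types~(a) and~(b) of Theorem~\ref{thm:8cycleprecise} is correct.

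The gap is in types~(c) and~(d). Your proposed route --- pass to a minimal counterexample, strip separating $5^-$-cycles, reinsert $v$, and run a case analysis on short cycles through $v$ --- is far heavier than needed, and you yourself flag that the reduction is delicate when $v$ lies on the separating cycle. What you are missing is a short parity argument that handles all four types at once. In $G_1$ each $c_{2j}$ has degree~$2$, so the path $K_0$ produced by Lemma~\ref{lemma5} (for any $\psi$-balanced layout; $K_0$ cannot be a cycle, since then $|m|\le 3<4\le|K_0|$) must have both endpoints among $c_1,c_3,c_5,c_7$. Hence each arc $X$ of $C$ determined by $K_0$ has even length, and because source and sink edges of the two-colouring $\psi$ alternate along $C$, one gets $n^s_X=n^t_X$. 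Lemma~\ref{lemma5} then forces $|K_0|<|m_X|\le 3$, so $|K_0|=2$ (as $C$ is chordless in $G_1$). But the region bounded by $X\cup K_0$ contains exactly one $5$-face (otherwise $m_X=0$), so its boundary length $|X|+|K_0|$ is odd --- impossible with $|X|$ even and $|K_0|=2$. This disposes of (c) and (d) immediately, and with $m_X=0$ the same trick re-proves (a) and (b) without the bipartite detour; the minimal-counterexample programme is not needed at all.
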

The corollary can be proven by splitting $v$ into four vertices of degree 2 that are in one $8$-face $F$ and precoloring $F$ by two colors, see Figure~\ref{fig-split}.

\begin{figure}
\center{\includegraphics[scale=1]{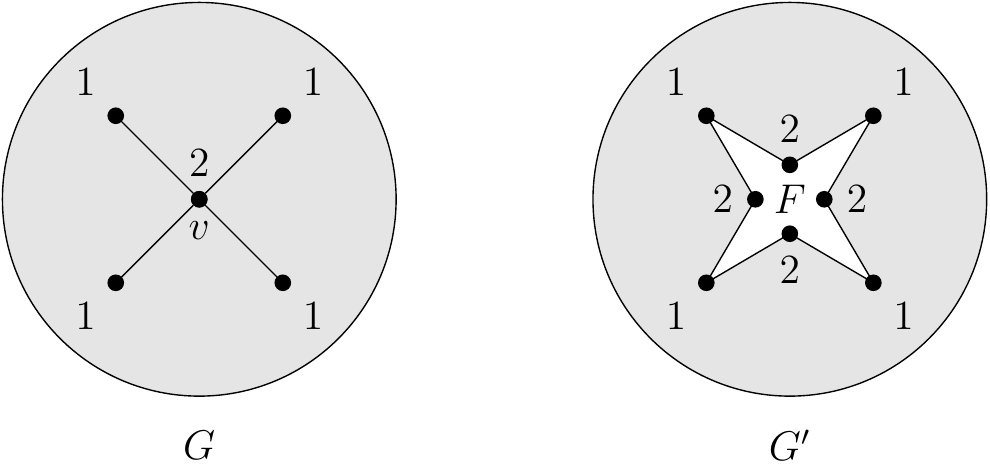}}
\caption{The coloring of a graph $G$, where all neighbors of a 4-vertex  $v$ have the same color, can be obtained by extending a precoloring of an $8$-face $F$ in $G'$, where $G'$ is obtained from $G$ by splitting $v$ into four vertices of degree 2.}\label{fig-split}
\end{figure}

In this paper, we push the project further and characterize all $C$-critical triangle-free planar graphs when $C$ is a $9$-face.

\begin{theorem}\label{thm:9cycleprecise}
Let $G$ be a connected triangle-free plane graph with outer face bounded by a $9$-cycle $C$.  The graph $G$ is $C$-critical for $3$-coloring
if and only if  
for every non-facial $8^-$-cycle of $K$ the subgraph of $G$ drawn in the closed disk bounded by $K$ is $K$-critical
and one of the following propositions is satisfied 
(see Figure~\ref{fig-all-critical-graphs} for an illustration).
\begin{enumerate}[$(a)$]
\item $S(G)=\{5\}$ and the $5$-face of $G$ intersects $C$ in a path of length at least two.
\item $S(G)=\{7\}$.
\item $S(G)=\{5, 6\}$ and the $5$-face and $6$-face of $G$ intersects $C$ in a path of length at least two and one, respectively.
\item $S(G)=\{5, 6\}$ and $G$ is depicted as $(d1)$ or $(d2)$ in Figure~\ref{fig-all-critical-graphs}.
\item $S(G)=\{5, 5, 5\}$ and $G$ is depicted as $(Bij)$ in Figure~\ref{fig-all-critical-graphs} for all $i,j$.
\item $G$ contains a chord.
\end{enumerate}
\end{theorem}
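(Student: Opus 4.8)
The plan is to prove the stated equivalence in both directions, with the ``only if'' implication — that every $C$-critical graph occurs in the list — carrying the bulk of the work. I would follow the strategy of Dvo\v r\'ak and Lidick\'y for the $8$-face case: use the correspondence between $3$-colorings of triangle-free plane graphs and nowhere-zero flows, so that ``a precoloring of $C$ extends to $G$'' becomes a question about extending a prescribed boundary flow, which is better suited to discharging and to local reductions.

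\emph{Preliminary reductions.} First I would record the structural consequences of $C$-criticality: $G$ is $2$-connected, every internal vertex has degree at least $3$, and for every non-facial cycle $K$ of length at most $8$ the subgraph $G_K$ drawn in the closed disk bounded by $K$ is $K$-critical — otherwise $G_K$ could be replaced by a strictly smaller $K$-critical graph (or deleted, when no such graph exists), contradicting minimality of $G$. Since no triangle-free plane graph is $K$-critical when $K$ has length $4$ or $5$, this already forbids separating $5^-$-cycles, and on non-facial $6$-, $7$-, and $8$-cycles it lets me quote Theorems~\ref{thm-gimbel}, \ref{thm:7cycleprecise}, and \ref{thm:8cycleprecise}. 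If $C$ has a chord, triangle-freeness forces the two regions it bounds to have boundary lengths $\{4,7\}$ or $\{5,6\}$; the non-facial cycle condition then forces the shorter side to be a single face while the longer side is critical for a $6$-, $7$-, or $8$-cycle, so the quoted theorems describe $G$ and we land in case $(f)$ (and conversely every such $G$ is $C$-critical). Where it is available, suppressing a degree-$2$ vertex of $C$ with non-adjacent $C$-neighbours produces a triangle-free plane graph with an $8$-cycle outer face whose criticality is equivalent to that of $G$; Theorem~\ref{thm:8cycleprecise} applied there, followed by re-insertion of the vertex, yields cases $(a)$, $(b)$, $(c)$ and some configurations of $(d)$ and $(e)$. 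The genuinely new analysis is thus confined to graphs in which $C$ is an induced $9$-cycle, $G$ has no separating $5^-$-cycle, and every vertex of $C$ has an internal neighbour.

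\emph{Bounding and locating the large faces.} A parity count of the edges lying strictly inside $C$ shows that $\sum_f(\ell(f)-4)$, summed over the internal faces $f$ of $G$, is odd; since $4$-faces contribute nothing, $\sum_{s\in S(G)}(s-4)$ is odd, so in particular $S(G)\neq\emptyset$. The core estimate is a discharging argument: give each internal face charge $\ell(f)-4$, each vertex of $C$ charge $\deg(v)-2$, and each internal vertex charge $\deg(v)-4$, so that the total charge equals $5$ by Euler's formula; a reducibility analysis (adjacent $4$-faces, $4$-faces meeting $C$ in restricted patterns, internal $3$-vertices together with their neighbourhoods) shows that essentially no charge is wasted, yielding $\sum_{s\in S(G)}(s-4)\le 3$. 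With the parity constraint this forces $S(G)\in\{\{5\},\{7\},\{5,6\},\{5,5,5\}\}$, corresponding to $(a)$, $(b)$, $(c)$/$(d)$, and $(e)$. For each possibility one then determines how the large faces must meet $C$: for instance, when $S(G)=\{5\}$, if the $5$-face met $C$ in a path of length at most one, then every precoloring of $C$ would extend, the remainder being an almost-quadrangulation of a disk with even boundary, so the intersection must be a path of length at least two; the cases $S(G)=\{5,6\}$ and $S(G)=\{5,5,5\}$ split similarly into a generic sub-case with prescribed intersection patterns and a finite list of exceptional graphs $(d1)$, $(d2)$, and $(Bij)$.

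\emph{Sufficiency and the main obstacle.} For the ``if'' direction I would, for each configuration in the list, exhibit an explicit $3$-coloring of $C$, verify through the flow formulation (or directly) that it does not extend to $G$, and check that it does extend after deleting any edge or internal vertex, using the hypothesis on non-facial $8^-$-cycles to control the resulting pieces. I expect the hardest step to be the complete and exhaustive identification of the exceptional configurations in $(d)$ and $(e)$ — the graphs $(d1)$, $(d2)$ with $S(G)=\{5,6\}$ and, above all, the family $(Bij)$ with three $5$-faces — where the simultaneous requirements (placement of two or three large faces relative to $C$, absence of separating short cycles, and criticality) must be resolved with no gaps; I would expect to enumerate the candidate graphs and certify non-extendability and criticality of each one by computer (see \ourURL).
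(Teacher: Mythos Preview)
Your high-level outline is sound and the flow formulation is the right setting, but the plan diverges from the paper at its technical core, and the divergences carry real gaps. The paper does not determine $S(G)$ by discharging; it simply quotes Dvo\v{r}\'ak--Kr\'al'--Thomas~\cite{trfree4} for $\mathcal{S}_9=\{\{5\},\{7\},\{5,6\},\{5,5,5\}\}$. Your charge assignment (total charge $5$) is correct, but the inequality $\sum_{s\in S(G)}(s-4)\le 3$ does not fall out of it: internal $3$-vertices carry charge $-1$, and you give no argument that their negative contribution is controlled. Likewise, your degree-$2$ suppression does not obviously preserve $C$-criticality in the direction you need --- a non-extending precoloring of the $9$-cycle may assign equal colours to the two $C$-neighbours of the suppressed vertex, so it does not restrict to a proper coloring of the resulting $8$-cycle and you lose your witness for criticality on that side. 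The paper makes no use of either manoeuvre.

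The decisive missing ingredient is the enumeration mechanism for the hard cases $(c)$--$(e)$. The paper's engine is Lemma~\ref{lemma5} (from~\cite{00DvLi}): for each $\psi$-balanced layout $q$ --- and there are two or three such layouts when $S(G)\in\{\{5,6\},\{5,5,5\}\}$ --- it produces a path $K_0$ with both ends on $C$ whose length is strictly smaller than the flow imbalance across it. Applying this to all layouts simultaneously gives two or three such cuts; the paper then classifies how each pair of cuts can sit relative to one another (the ``kinds'' $(00)$, $(11)$, $(22)$, $(20)$, $(02)$, with a further ``common point'' analysis via Lemma~\ref{lem:common} for kind $(11)$), extracts from Lemma~\ref{lemma5} and Lemma~\ref{lem:inside} a small system of linear integer inequalities for each kind, and enumerates all integer solutions by computer. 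This is precisely what produces the configurations $(c1)$, $(c2)$, $(d1)$, $(d2)$ and the whole family $(Bij)$. Your plan offers a certification step (check each candidate by computer) but no comparable mechanism for \emph{generating} the candidate list in the first place; Lemma~\ref{lemma5} together with the kind analysis is the missing idea.
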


\def\tempL{0.19}
\begin{figure}
\begin{center}
\includegraphics[width=\tempL\textwidth,page=1]{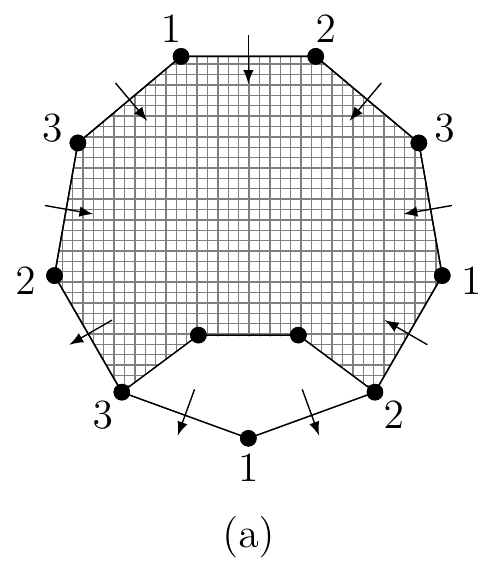}
\includegraphics[width=\tempL\textwidth,page=2]{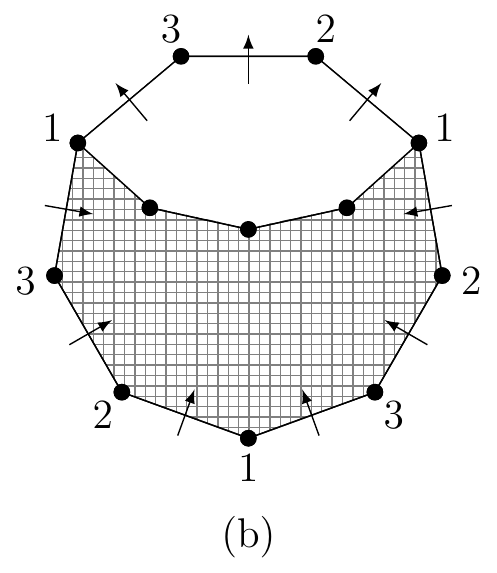}
\includegraphics[width=\tempL\textwidth,page=1]{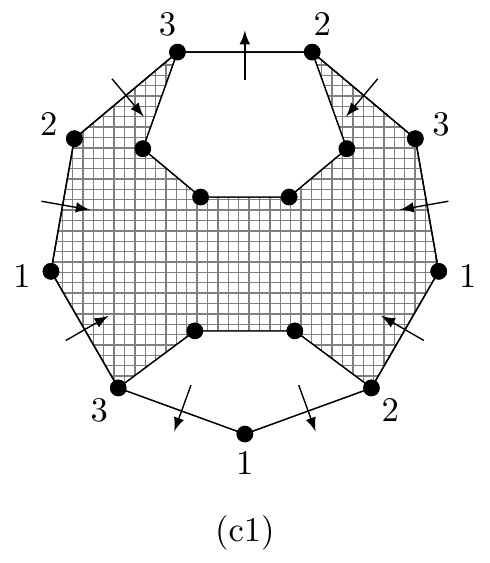}
\includegraphics[width=\tempL\textwidth,page=2]{fig-criticals/fig-65twocutsnocross}
\includegraphics[width=\tempL\textwidth,page=1]{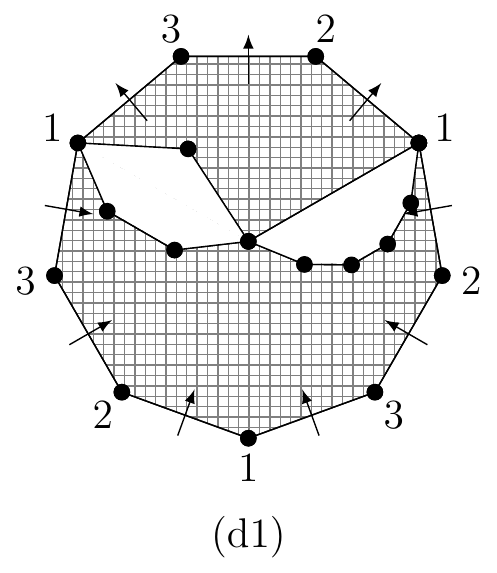}
\includegraphics[width=\tempL\textwidth,page=1]{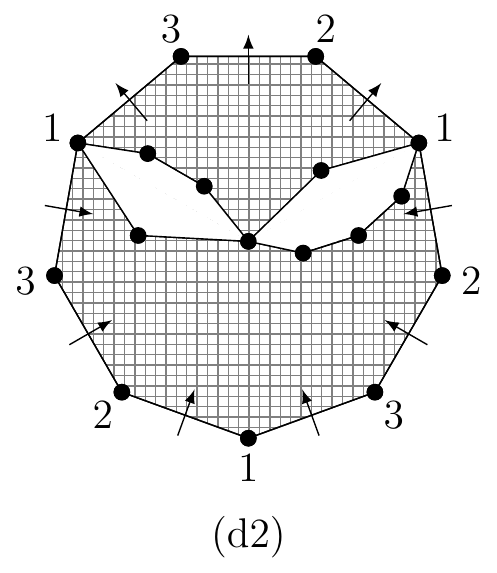}
\includegraphics[width=\tempL\textwidth,page=1]{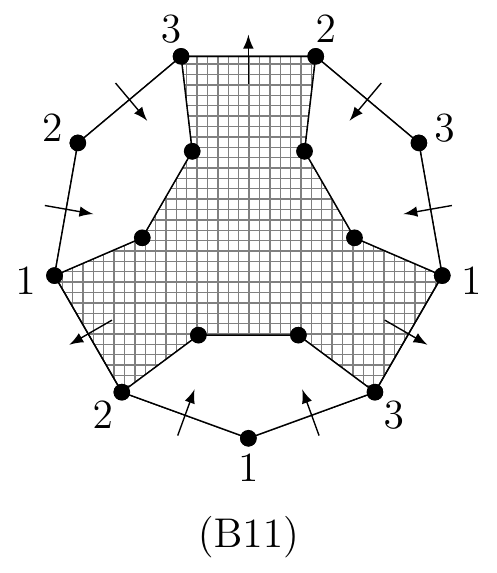}
\includegraphics[width=\tempL\textwidth,page=2]{fig-criticals/fig-555-critical}
\includegraphics[width=\tempL\textwidth,page=3]{fig-criticals/fig-555-critical} 
\includegraphics[width=\tempL\textwidth,page=7]{fig-criticals/fig-555-critical}
\includegraphics[width=\tempL\textwidth,page=8]{fig-criticals/fig-555-critical}
\includegraphics[width=\tempL\textwidth,page=9]{fig-criticals/fig-555-critical}
\includegraphics[width=\tempL\textwidth,page=10]{fig-criticals/fig-555-critical}
\includegraphics[width=\tempL\textwidth,page=11]{fig-criticals/fig-555-critical}
\includegraphics[width=\tempL\textwidth,page=12]{fig-criticals/fig-555-critical}
\includegraphics[width=\tempL\textwidth,page=13]{fig-criticals/fig-555-critical}
\includegraphics[width=\tempL\textwidth,page=15]{fig-criticals/fig-555-critical}
\includegraphics[width=\tempL\textwidth,page=14]{fig-criticals/fig-555-critical}
\includegraphics[width=\tempL\textwidth,page=1]{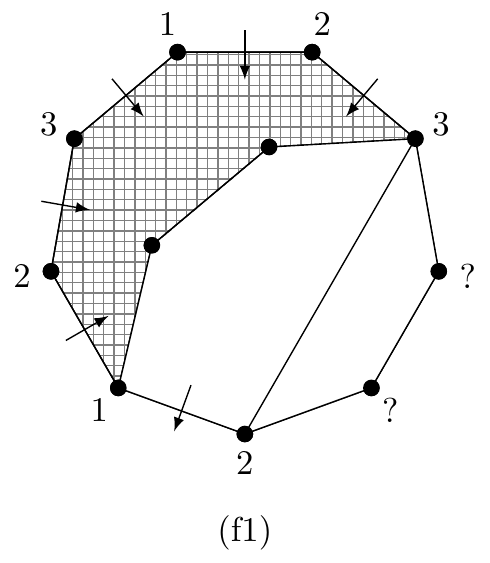}
\includegraphics[width=\tempL\textwidth,page=2]{fig-criticals/fig-chord-critical}

\end{center}
\caption{All $C$-critical triangle-free plane graphs where $C$ is a 9-cycle bounding the outer face.
Note that each figure actually represents infinitely many graphs, including ones that can be obtained by identifying some of the depicted vertices. The arrows correspond to source edges and sink edges that are defined in the Preliminaries.
}
\label{fig-all-critical-graphs}
\end{figure}

The proof of Theorem~\ref{thm:9cycleprecise} involves enumerating all integer solutions 
to several small sets of linear constraints. It would be possible to solve them by hand but we have decided
to use computer programs to enumerate the solutions.
Both computer programs and enumerations of the solutions are available online on arXiv and at
\ourURL.

\section{Preliminaries}
Our proof of Theorem~\ref{thm:9cycleprecise} uses the same method as Dvo\v{r}\'ak and Lidick\'{y}~\cite{00DvLi}.
The main idea is to use the correspondence between colorings of a plane graph $G$ and flows in the dual of $G$.
In this paper, we give only a brief description of the correspondence and state Lemma~\ref{lemma5} from \cite{00DvLi}, which is used throughout this paper.
A more detailed and general description can be found in~\cite{00DvLi}.

Let $G^{\star}$ denote the dual of a 3-colorable plane graph $G$.
Let $\varphi$ be a proper 3-coloring of the vertices of $G$ by colors $\{1,2,3\}$. 
For every edge $uv$ of $G$, we orient the corresponding edge $e$ in $G^{\star}$ in the following way.
Let $e$ have endpoints $f,h$ in $G^\star$, where $f$,$v$,$h$ is in the clockwise order from vertex $u$
in the drawing of $G$. The edge $e$ will be oriented from $f$ to $h$ if $(\varphi(u),\varphi(v)) \in \{(1,2),(2,3),(3,1)\}$, and from $h$ to $f$ otherwise. See Figure~\ref{fig-network} for an example of the orientation.

\begin{figure}[ht]
\begin{center}
\includegraphics[scale=1]{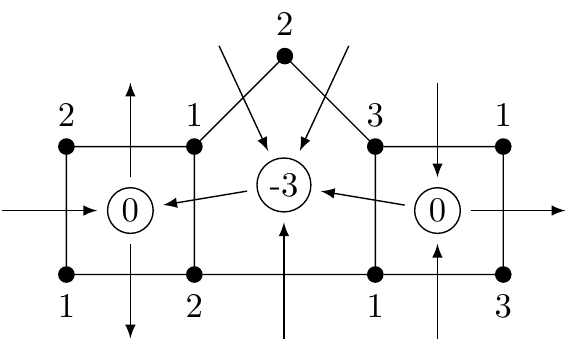}
\end{center}
\caption{A $3$-coloring of a graph $G$ and the corresponding orientation of the edges in $G^\star$.
 }
\label{fig-network}
\end{figure}

Since $\varphi$ is a proper coloring, every edge of $G^\star$ has an orientation. Tutte~\cite{tutteflow} showed that 
this orientation of $G^\star$ defines a nowhere-zero $\mathbb{Z}_3$-flow, which means that the in-degree and the out-degree
of every vertex in $G^\star$ differ by a multiple of three. Conversely, every nowhere-zero $\mathbb{Z}_3$-flow in  $G^\star$
defines a proper 3-coloring of $G$ up to the rotation of colors.

Let $h$ be the vertex in $G^\star$ corresponding to the outer face of $G$. Edges oriented away from $h$ are called \emph{source edges} and the edges oriented towards $h$ are called \emph{sink edges}. The orientations
of edges incident to $h$ depend only on the coloring of $C$, where $C$ is the cycle bounding the outer face of $G$.
Denote by $\nss$ the number of source edges and by $\ntt$ the number of sink edges.
For a subgraph $Z$ of $G$ or a subset $Z$ of $E(G)$, we will use $\ns Z$ and $\nt Z$ to denote the number of source edges and sink edges in $G^\star$ whose dual is in $Z$, respectively. 
Recall that only edges in $C$ have source edges or sink edges in the dual.

For a vertex $f$ of $G^\star$, let $\delta(f)$ denote the difference of the out-degree and in-degree of $f$.
Possible values of $\delta(f)$ depend on the size of the face corresponding to $f$, denoted by $|f|$.
Clearly $|\delta(f)| \leq |f|$ and $\delta(f)$ has the same parity as $|f|$.
Hence if $|f| = 4$, then $\delta(f) = 0$. 
Similarly, if $|f| \in \{5,7\}$, then $\delta(f) \in \{-3,3\}$
and if $|f| = 6$ then $\delta(f) \in \{-6,0,6\}$.

We call a function $q$ assigning an integer to every internal face $f$ of $G$ a \emph{layout} if $q(f) \leq |f|$, $q(f)$ is divisible by 3, and $q(f)$ has the same parity as $|f|$.
Notice that $q(f)$ satisfies the same conditions as $\delta(f)$. 
Therefore it is sufficient to specify the $q$-values for faces of size at least $5$, since $q(f)=0$ if $f$ is a $4$-face.
A layout $q$ is {\it $\psi$-balanced} if $\nss+m=\ntt$, where $m$ is the sum of the $q$-values over all internal faces of $G$.

Our main tool is the following lemma from \cite{00DvLi}.

\begin{lemma}[\cite{00DvLi}] \label{lemma5} 
Let $G$ be a connected triangle-free plane graph with outer face $C$ bounded by a cycle and let $\psi$ be a $3$-coloring of $C$ that does not extend to a $3$-coloring of $G$. If $q$ is a $\psi$-balanced layout in $G$, then there exists a subgraph $K_0\subseteq G$ such that either
\begin{itemize}
\item[i)] $K_0$ is a path with both ends in $C$ and no internal vertex in $C$, and if $P$ is a path in $C$ joining the end vertices of $K_0$, $n^s$ is the number of source edges of $P$, $n^t$ is the number of the sink edges of $P$, and $m$ is the sum of the $q$-values over all faces of $G$ drawn in the open disk bounded by the cycle $P+K_0$, then $\vert n^s+m-n^t\vert > \vert K_0\vert$. In particular, $\vert P\vert + \vert m \vert >\vert K_0\vert$. \\
Or,
\item[ii)]$K_0$ is a cycle with at most one vertex in $C$, and if $m$ is the sum of the $q$-values over all faces of $G$ drawn in the open disk bounded by $K_0$, then $\vert m \vert > \vert K_0 \vert$.
\end{itemize}
\end{lemma}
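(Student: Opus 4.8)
The plan is to recast the existence of a $3$-coloring of $G$ extending $\psi$ as a flow-realizability question on the dual $G^\star$, and then to read off $K_0$ from a minimal cut that witnesses infeasibility. By the Tutte correspondence described above, a $3$-coloring of $G$ extending $\psi$ is exactly a nowhere-zero $\mathbb{Z}_3$-flow on $G^\star$ whose edges incident to $h$ carry the orientation forced by $\psi$. For any such flow the values $\delta(f)$ over internal faces form a layout, and a handshake over all vertices of $G^\star$ gives $n^s+\sum_f \delta(f)=n^t$, so the realized layout is automatically $\psi$-balanced. I would turn this around: fix the given $\psi$-balanced layout $q$ and ask for an orientation of the internal edges of $G$ (the non-$h$ edges of $G^\star$) with $\delta(f)=q(f)$ for every internal face $f$. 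Since each $q(f)$ is a multiple of $3$ and $\delta(h)=n^s-n^t=-m$ is too, any such orientation is a nowhere-zero $\mathbb{Z}_3$-flow, hence a $3$-coloring extending $\psi$. As $\psi$ does not extend, no orientation realizes $q$.

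Next I would phrase this as an integer circulation on $H=G^\star-h$. Writing $b(f)=n^t_f-n^s_f$ for the fixed contribution of the $C$-edges to $\delta(f)$ (where $n^s_f,n^t_f$ count source and sink edges at $f$), realizing $q$ means orienting the edges of $H$ so that the net out-degree at $f$ equals $q(f)-b(f)$. The parity condition on layouts together with the $\psi$-balanced identity $n^s+m=n^t$ supply the two prerequisites for such a circulation, namely $q(f)-b(f)\equiv\deg_H(f)\pmod 2$ for each $f$ and $\sum_f(q(f)-b(f))=0$ overall. By the standard feasibility criterion (Hoffman's circulation theorem, in its Hall--Hakimi form), infeasibility yields a set $S$ of internal faces across whose boundary the demand strictly exceeds the capacity:
\[
\Big|\sum_{f\in S}\big(q(f)-b(f)\big)\Big| > e_H(S,\overline{S}),
\]
where $e_H(S,\overline{S})$ is the number of internal edges of $G$ with exactly one incident face in $S$.

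Then I would extract $K_0$ from this cut and verify the bookkeeping. Taking the cut to be minimal (a bond of $G^\star$), planar duality identifies its dual edge set with a single cycle $K$ of $G$ bounding the region $R=\bigcup_{f\in S}\overline{f}$. The $C$-edges of $K$ form the boundary arc $P$ and the internal edges of $K$ form $K_0$, so that $e_H(S,\overline{S})=|K_0|$; moreover $\sum_{f\in S}q(f)$ equals the quantity $m$ of $q$-values inside $K$, while $\sum_{f\in S}b(f)=n^t-n^s$ counted over $P$. Substituting gives $|n^s+m-n^t|>|K_0|$, which is conclusion (i) when $K$ meets $C$ in an arc, and collapses to $|m|>|K_0|$ (conclusion (ii)) when $K$ avoids $C$. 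The supplementary bound $|P|+|m|>|K_0|$ then follows from $n^s,n^t\le|P|$.

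The hard part will be the topological localization. The region $R$ produced by the feasibility theorem may touch the outer cycle $C$ along several disjoint arcs, so that the internal part of $\partial R$ is a union of paths rather than the single path (or single cycle) demanded by (i) and (ii). I expect the core of the argument to be confining the defect to a region that meets $C$ in one arc or not at all: for instance by choosing $R$ innermost among all violating regions, splitting along the chords cut off by the $C$-gaps, and using additivity of $\sum_{f\in S}(q(f)-b(f))+e_H(S,\overline{S})$ over the resulting pieces to pass to a single piece that still witnesses the strict inequality. Checking that this piece yields $K_0$ a path with both ends on $C$ and no interior vertex on $C$ (respectively a cycle meeting $C$ in at most one vertex) is exactly where minimality of the cut and planarity must be combined with care.
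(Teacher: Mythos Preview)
The paper does not prove this lemma; it is quoted verbatim from Dvo\v{r}\'ak and Lidick\'y~\cite{00DvLi} and used as a black box throughout. So there is no ``paper's own proof'' to compare your proposal against.

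That said, your outline is the natural route and is essentially how the result is established in the source. The reduction is correct: a $3$-coloring of $G$ extending $\psi$ corresponds to an orientation of the internal dual edges realizing \emph{some} $\psi$-balanced layout, and conversely any orientation realizing the given $q$ yields a nowhere-zero $\mathbb{Z}_3$-flow on $G^\star$ (your divisibility and parity checks are right), hence a coloring. Non-extendability therefore forces infeasibility of the degree-constrained orientation problem on $H=G^\star-h$, and the Hakimi/Hoffman cut condition produces a set $S$ of internal faces with $\bigl|\sum_{f\in S}(q(f)-b(f))\bigr|>e_H(S,\overline S)$. Your bookkeeping translating this into $|n^s+m-n^t|>|K_0|$ is accurate.

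You have also correctly located the only genuine difficulty: the violating set $S$ need not be a disk meeting $C$ in a single arc, so the boundary of $\bigcup_{f\in S}\overline f$ need not be a single path or cycle of the required form. Your suggested fix---take a minimal violating $S$ and, if its trace on $C$ has several arcs, split and pass to a piece---is the right instinct, but note that the quantity $\bigl|\sum_{f}d(f)\bigr|-e_H(S,\overline S)$ is not literally additive under such splits (edges between pieces appear and disappear from the cut), so ``additivity'' alone will not do. What works is to choose $S$ inclusion-minimal among violating sets; then $S$ is connected in $H$ and $\overline S$ has no component with all its $C$-edges in the interior of the arc of $S$ (otherwise absorbing or excising that component would give a smaller violator). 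This forces the internal boundary to be a single path with ends on $C$ and interior off $C$, or a single cycle touching $C$ in at most one vertex, exactly as in (i)/(ii). Filling in this minimality argument carefully is the substance of the proof.
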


For a multiset of numbers $F$, let $\ell(F)$ denote the smallest integer $\ell$ such that there exists a triangle-free plane graph $G$ with outer face bounded by an $\ell$-cycle $C$, such that $G$ is $C$-critical and $S(G)=F$.
It is known from~\cite{trfree4} that $\ell(\{i\}) = i+2$ and $\ell(\{5,6\}) = 9$.

%
%
%
%
%

The next lemma from \cite{trfree1} describes interiors of cycles in critical graphs and will be used frequently in this paper.

\begin{lemma}[\cite{trfree1}] \label{lem:inside}
Let $G$ be a plane graph with outer face $C$. Let $K$ be a non-facial cycle in $G$,
 and let $H$ be the subgraph of $G$ drawn in the closed disk bounded by $K$. If $G$ is $C$-critical for $k$-coloring, then $H$ is $K$-critical for $k$-coloring.
\end{lemma}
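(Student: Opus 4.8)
The plan is to decompose $G$ along the cycle $K$ into the graph $H$ (the part on and inside $K$) and the ``outer part'' $G_{\mathrm{out}}$ (the part on and outside $K$), and then to transfer the $C$-criticality of $G$ to $K$-criticality of $H$ by gluing colorings along $K$. First I would record the decomposition: every vertex of $G$ lies on, strictly inside, or strictly outside $K$, and every edge of $G$, being an arc meeting $K$ only possibly at its ends, lies entirely in the closed interior or the closed exterior of $K$; hence $G = G_{\mathrm{out}} \cup H$ with $G_{\mathrm{out}} \cap H = K$ as subgraphs. Note that $C \subseteq G_{\mathrm{out}}$, that $K$ bounds the outer face of $H$, and that, since $K$ is non-facial, there is a vertex or an edge of $G$ strictly inside $K$, so that $H$ properly contains $K$. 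The key observation is that if $\alpha$ is a proper $k$-coloring of $G_{\mathrm{out}}$ and $\beta$ is a proper $k$-coloring of $H$ with $\alpha|_K = \beta|_K$, then $\alpha \cup \beta$ is a proper $k$-coloring of $G$, since every edge of $G$ lies in $G_{\mathrm{out}}$ or in $H$ and the two colorings agree on $K = G_{\mathrm{out}} \cap H$.

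Then I would verify the definition of $K$-criticality for $H$ directly. Let $H'$ be an arbitrary proper subgraph of $H$ with $K \subseteq H'$; the goal is to produce a proper $k$-coloring $\phi$ of $K$ that extends to $H'$ but not to $H$. Set $G' = G_{\mathrm{out}} \cup H'$. Then $C \subseteq G_{\mathrm{out}} \subseteq G'$, and $G'$ is a \emph{proper} subgraph of $G$: a vertex or edge of $H$ missing from $H'$ lies strictly inside $K$ (as $K \subseteq H'$), hence is not in $G_{\mathrm{out}}$ and so not in $G'$. Applying the hypothesis that $G$ is $C$-critical to the proper subgraph $G' \supseteq C$, we obtain a proper $k$-coloring $\psi$ of $C$ together with an extension $\psi'$ of $\psi$ to a proper $k$-coloring of $G'$ such that $\psi$ does not extend to a proper $k$-coloring of $G$. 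Let $\phi = \psi'|_K$; then $\psi'|_{H'}$ shows that $\phi$ extends to $H'$. If $\phi$ extended to a proper $k$-coloring $\chi$ of $H$, then $\psi'|_{G_{\mathrm{out}}} \cup \chi$ would be a proper $k$-coloring of $G_{\mathrm{out}} \cup H = G$ extending $\psi$ (the two colorings agree on $K$), contradicting that $\psi$ does not extend to $G$. Hence $\phi$ does not extend to $H$. As $H'$ was arbitrary, $H$ is $K$-critical for $k$-coloring.

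I do not expect a serious obstacle; the one point needing care is the topological bookkeeping behind $G = G_{\mathrm{out}} \cup H$ and $G_{\mathrm{out}} \cap H = K$ — this is where the planar embedding and the fact that $K$ is a cycle are used — together with the observation that any vertex or edge of $H$ not present in a subgraph containing $K$ must sit strictly inside $K$, which is exactly what keeps $G'$ a proper subgraph of $G$. The rest is a routine unwinding of the definitions of $C$- and $K$-criticality and of the gluing of colorings, and the argument is uniform in $k$.
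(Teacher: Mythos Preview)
Your proof is correct. The paper does not include its own proof of this lemma---it is quoted from~\cite{trfree1}---so there is nothing in the present paper to compare against; but your argument (decompose $G$ along $K$ into the inside piece $H$ and the outside piece $G_{\mathrm{out}}$, apply $C$-criticality of $G$ to $G_{\mathrm{out}}\cup H'$, and restrict the resulting coloring to $K$) is exactly the standard and essentially only way this lemma is proved.
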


Next we include several definitions used throughout the rest of the paper. For the definitions, we assume that $G$ is a graph with outer face bounded by a cycle $C$.

An $x, y$-path is a path with endpoints $x$ and $y$. 
Given $a, b, c, d\in V(C)$, let $C(a, b; c, d)$ denote the $a, b$-subpath of $C$ that does not contain vertices $c$ and $d$ as internal vertices. 
An $x, y$-path $K$ is an {\it $(x, y; f)$-cut} if $x, y$ are on $C$, no internal vertices of $K$ are on $C$, and the face $f$ is in the region bounded by $K$ and the clockwise $x, y$-subpath of $C$. 

Let $K_1$ and $K_2$ be two distinct paths with endpoints on $C$ that are internally disjoint from $C$.
For $i \in \{1,2\}$ let $P_i$ be a subpath of $C$ with the same endpoints as $K_i$ and
label the endpoints of $K_i$ by $u_i$ and $v_i$, where $u_i$ is the first vertex of $P_i$
when traversing the cycle formed by $K_i$ and $P_i$ clockwise.
The \emph{order} of $u_1,v_1,u_2,v_2$ is an ordering of these vertices when traversing along $C$ in the clockwise order. If $x_1 \in \{u_1, v_1\}$ and $x_2 \in \{u_2,v_2\}$ are the same vertex, we define the order of $x_1$ and $x_2$ in the following way. Let $x_1=y_0,\ldots,y_m$ be the longest common subpath of $K_1$ and $K_2$. 
We consider the neighbors $N$ of $y_m$ in the counterclockwise order ending with $y_{m-1}$ or a vertex of $C$ if $m=0$. 
If a vertex of $K_1$ appears in $N$ before every vertex of $K_2$, then $x_1$ is before $x_2$ in the ordering, otherwise $x_2$ is before $x_1$.

Every order, or the pair $K_1,K_2$, is assigned a \emph{kind} $(t_1t_2)$, where $t_i$ 
is the number of vertices from $\{u_{3-i},v_{3-i}\}$ that are in $P_i$.
Hence there are only five possible kinds; namely $(00)$, $(02)$, $(20)$, $(22)$, and $(11)$. 
If $K_1$ is the same path as $K_2$, we can pick any order that will give kind $(00)$, $(02)$,  or $(20)$. 

Suppose that the order is $u_1,v_2,v_1,u_2$, which gives kind (11).
By planarity, there exists a vertex $x$ such that $x$ is an internal vertex of both $K_1$ and $K_2$. Denote by  $K_i^y$ a subpath of $K_i$ with endpoints $x$ and $y$ for $i \in \{1,2\}$
and $y \in \{u_i,v_i\}$;  see Figure~\ref{fig-common}.
Let $F_i$ be the set of $5^+$-faces that are in the interior of the cycle bounded by  $P_i$ and $K_i$
and in the exterior of the cycle bounded by $P_{3-i}$ and $K_{3-i}$ for $i \in \{1,2\}$.
The vertex $x$ is a \emph{common point} of $K_1$ and $K_2$ if 
every face in $F_1$ is in an interior face of the subgraph of $G$ induced by $P_1,K_1^{v_1},K_2^{v_2}$
and every face in $F_2$ is in an interior face of the subgraph of $G$ induced by $P_2,K_1^{u_1},K_2^{u_2}$.

\begin{figure}
\begin{center}
 \includegraphics[scale=1]{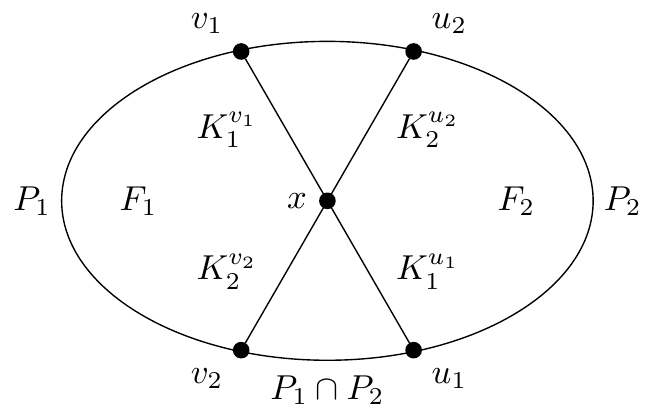}\\
\end{center}
\caption{Kind (11) and a common point $x$.}
\label{fig-common}
\end{figure}

It is possible to show that there always exists a common point for the kind (11) if $K_1$ and $K_2$ are not too long.
\begin{lemma}\label{lem:common}
Let $G$ be a triangle-free plane graph with outer face $C$ where every 4-cycle bounds a face and let $K_1$ and $K_2$ be paths in $G$ with endpoints 
$u_1,v_1,u_2,v_2$ in $C$. Let $K_1$ and $K_2$ be internally disjoint with $C$ and the order of $u_1,v_1,u_2$, and $v_2$  form (11).
Then there exists a common point  for $K_1$ and $K_2$ if either
 $\max \{|K_1|,|K_2|\} \leq 7$ and $\min \{|K_1|,|K_2|\} \leq 6$ 
or 
 $|K_1|=|K_2|=7$ and the endpoints of $K_1$ and $K_2$ are the same.
\end{lemma}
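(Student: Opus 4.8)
The plan is to produce an explicit common point and verify the two conditions in its definition by hand, invoking the length bounds only at the very end. Fix the $(11)$-order $u_1,v_2,v_1,u_2$ along $C$ read clockwise, let $P_1$ be the $u_1,v_1$-arc of $C$ through $v_2$ and $P_2$ the $u_2,v_2$-arc through $u_1$, so that $R:=P_1\cap P_2$ is the $u_1,v_2$-arc of $C$; write $D_i$ for the closed disc bounded by $K_i\cup P_i$, so $F_1$ is the set of $5^+$-faces lying in $D_1\setminus D_2$ and $F_2$ the set of those lying in $D_2\setminus D_1$. The first observation is that $K_1$ and $K_2$ must share an internal vertex: the path $K_1$ separates the disc bounded by $C$ into two parts, one having $v_2$ on its boundary and the other $u_2$ (since $\{u_1,v_1\}$ separates $v_2$ from $u_2$ on $C$), so the arc $K_2$ meets $K_1$, and in a plane graph two arcs can only meet in a vertex.

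Next I would fix the common point. Let $x$ be the vertex of $K_1\cap K_2$ that is internal to both paths and for which the subpath of $K_1$ from $x$ to $v_1$, call it $K_1^{v_1}$, is as short as possible, breaking ties by taking $K_2^{v_2}$ (the subpath of $K_2$ from $x$ to $v_2$) shortest. With this choice $K_1^{v_1}$ meets $K_2$ only at $x$, so $Q_1:=K_1^{v_1}\cup K_2^{v_2}\cup R_1$, where $R_1$ is the $v_1,v_2$-subarc of $P_1$, is a cycle, and the region it bounds is a union of interior faces of the subgraph of $G$ induced by $P_1,K_1^{v_1},K_2^{v_2}$; symmetrically $Q_2:=K_1^{u_1}\cup K_2^{u_2}\cup R_2$ is a cycle, with $R_2$ the $u_1,u_2$-subarc of $P_2$. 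When $K_1$ and $K_2$ meet only at $x$, a direct look at the picture gives $D_1\setminus D_2=\mathrm{int}\,Q_1$ and $D_2\setminus D_1=\mathrm{int}\,Q_2$ exactly, and both capturing conditions are immediate; the substance of the lemma is to show they persist when $K_1$ and $K_2$ interact more.

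So the core claim is $D_1\setminus D_2\subseteq \mathrm{int}\,Q_1$ (and, symmetrically, $D_2\setminus D_1\subseteq \mathrm{int}\,Q_2$), which yields condition (i). I would argue by contradiction. A $5^+$-face $f\in F_1$ lying outside $Q_1$ must lie in the region $D_1'$ bounded by $K_1^{u_1}\cup K_2^{v_2}\cup R$ on the ``$u_1$-side'' of $K_2^{v_2}$ inside $D_1$, while $f\not\subseteq D_2$. Since $D_1'$ and $D_2$ share $R$ and the $K_2$-piece $K_2^{v_2}$, and $K_1^{u_1}$ enters $D_2$ at $u_1$, escaping $D_2$ forces $K_1^{u_1}$ to cross $K_2$ again at some vertex $x'\neq x$, and then a piece of $K_1$, a piece of $K_2$, and possibly a subpath of $C$ close up into a cycle $Z$ with $f$ in its interior. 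As $G$ is triangle-free and every $4$-cycle of $G$ bounds a face, no $5^+$-face lies inside a cycle of length at most $4$, so $|Z|\ge 5$. The $K_1$- and $K_2$-edges of $Z$ are disjoint from the $K_1^{v_1}$- and $K_2^{v_2}$-edges used by $Q_1$, and $K_1^{v_1}$ and $K_2^{v_2}$ each have at least one edge; adding up the contributions forces $|K_1|+|K_2|\ge 14$, contradicting $\max\{|K_1|,|K_2|\}\le 7$ and $\min\{|K_1|,|K_2|\}\le 6$. The remaining case, $|K_1|=|K_2|=7$ with the ends of $K_1$ equal to the ends of $K_2$, runs the same way, the point being that sharing \emph{both} ends forces the $C$-part of $Z$ to be empty, which recovers exactly the edge one loses by allowing both paths to have length $7$.

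The principal obstacle is this last accounting: one must run through the ways $K_1$ and $K_2$ can reintersect after $x$ — several isolated crossings, a shared subpath, or a crossing occurring at an endpoint of the other path — and check in each case that an ``escaped'' $5^+$-face spends enough extra edges on $K_1\cup K_2$ to break the length bound; this is what pins the thresholds to exactly $7$ and $6$ and explains why two length-$7$ paths are tolerated only when they have the same ends. A smaller, but genuine, nuisance is to make the choice of $x$ and the identification of the arcs $R_1,R_2,R$ compatible with the tie-breaking convention used in the definition of the order when some endpoints of $K_1$ and $K_2$ coincide.
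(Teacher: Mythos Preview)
Your approach is genuinely different from the paper's, but the argument as written has a real gap at its most important step.

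The counting that is supposed to yield $|K_1|+|K_2|\ge 14$ does not follow from what you have set up. You produce a cycle $Z$ with a $5^+$-face inside, so $|Z|\ge 5$; the edges of $Z$ coming from $K_1\cup K_2$ lie in $K_1^{u_1}\cup K_2$, and you still have $|K_1^{v_1}|\ge 1$ and $|K_2^{v_2}|\ge 1$. Even if $Z$ uses no $C$-edges, this only gives $|K_1|+|K_2|\ge 7$, not $14$. Nothing in the setup forces a second disjoint long cycle to appear, and you do not argue for one. A related problem is the appeal to ``symmetry'' for the inclusion $D_2\setminus D_1\subseteq\mathrm{int}\,Q_2$: your choice of $x$ is deliberately asymmetric (closest to $v_1$ on $K_1$), so the closest-to-$u_1$ intersection could be a different vertex $x'$; you would then have condition~(i) holding at $x$ and condition~(ii) holding at $x'$, with no single common point guaranteed. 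You flag the case analysis of multiple reintersections as ``the principal obstacle'', but the proof hinges entirely on it and it is not carried out.

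For comparison, the paper does not attempt a direct edge count. It instead performs local modifications --- splitting an intersection vertex into two across a new $4$-face, and rerouting through $4$-faces bounded by consecutive pieces of $K_1$ and $K_2$ --- each of which strictly reduces the number of shared internal vertices without moving any $5^+$-face relative to the regions cut out by $K_1$ and $K_2$. After exhausting these reductions they invoke a computer enumeration: under the length hypotheses the only surviving patterns have a single shared vertex, except when $|K_1|=|K_2|=7$, where eight patterns remain and are eliminated either because they fail to form kind~$(11)$ or because they contain a non-facial $4$-cycle. So the hypothesis that every $4$-cycle bounds a face is used in a sharper way than ``$|Z|\ge 5$'', and the exact thresholds $7$ and $6$ emerge from the enumeration rather than from a single inequality.
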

\begin{proof}
We describe operations that eliminate candidates for common points.
Eventually, we show that the situation is equivalent to the case where $K_1$ and $K_2$
share exactly one vertex and then it is easy to see it is the common point.

By planarity and the kind (11), there must be at least one vertex of $G$ that is internal vertex of both $K_1$
and $K_2$. 
When traversing $K_2$ from $v_2$ to $u_2$ we label the internal vertices of $K_2$ that
are also vertices of $K_1$ by $c_1,c_2,c_3,\ldots$. 
These vertices are candidates to be common points. 
We order them by their distance from $u_1$ on the path $K_1$.
Let $P_1$ be the clockwise path in $C$ from $u_1$ to $v_1$.

An edge of $K_2$ is \emph{inside} if it is drawn inside of the open disk bounded by the cycle formed by $P_1$ and $K_1$
or if it is incident with $v_2$.
An edge of $K_2$ is \emph{outside} if it is drawn outside of the closed disk bounded by the cycle formed by $P_1$ and $K_1$
or if it is incident with $u_2$. 
Notice that if an edge of $K_2$ is neither inside nor outside, it is also an edge of $K_1$
and we call it \emph{shared}.

Now we do several modifications to $G$, $K_1$ and $K_2$ such that the $5^+$-faces are not affected but some candidates for common vertices are eliminated. 

For some $i$, if $c_i$ is adjacent to one shared edge and one not shared edge $h$, then we split $c_i$ into two vertices $c^1_i$ and $c^2_i$,
creating a new $4$-face containing $c^1_i,c^2_i$ and the two other neighbors of $c_i$ in $K_1$.
We can replace $c_i$ by $c^1_i$ and $c^2_i$ in $K_1$ and $K_2$ such that 
$c^2_i$ is inside or outside of the new cycle formed by $P_1K_1$ if $h$ is inside or outside, respectively.
By performing this operation, we decrease the number of vertices in the intersection of $K_1$ and $K_2$ and we can assume $K_2$ has no shared edges.

If both edges of $K_2$ incident with $c_i$ for some $i$ are inside (or outside) we split $c_i$ into two vertices $c^1_i$ and $c^2_i$,
creating a new $4$-face containing $c^1_i,c^2_i$ and the other neighbors of $c_i$ in $K_1$.
We can replace $c_i$ by $c^1_i$ and $c^2_i$ in $K_1$ and $K_2$, respectively.  
We label the vertices such that $c^2_i$ is in the interior (or exterior, respectively) of the
cycle bounded by $K_1$ and $P_1$. 
By performing this operation, we decrease the number of vertices in the intersection of $K_1$ and $K_2$ and assume that $c_i$ is incident to one inside edge and one outside edge for all $i$.

If $c_i$ and $c_{i+1}$ are consecutive in the order given by the distance from $u_1$ and 
the subpaths of $K_1$ and $K_2$ with endpoints $c_i$ and $c_{i+1}$ form a 4-cycle $K$ (hence a 4-face), then
we can reroute the paths such that the length of one of the paths is decreased or we create two vertices that are both incident with only inside or only outside edges.
If one of the two paths forming $K$ has length one, the other one has length three and replacing the longer one by an edge decreases the length of $K_1$ or $K_2$ (it also creates a new shared that  we can eliminate). 
If both paths have length two, we swap them and now both $c_i$ and $c_{i+1}$ are incident
to two edges that are both inside or both outside and they can be eliminated.

Notice that these operations do not increases the length of $K_1$ or $K_2$, do not create new vertices in the intersection of $K_1$ or $K_2$, do not affects locations or number of $5^+$-faces of $G$
with respect to regions formed by $K_1C$ and $K_2C$. Hence a common point in the result would be a common point in the original configuration.

With use of computer, we generate all possible patterns where none of the above operations can be applied.
In all of the patters with  $\max \{|K_1|,|K_2|\} \leq 7$ and $\min \{|K_1|,|K_2|\} \leq 6$, there is only one vertex shared by $K_1$ and $K_2$, which is the common point.

If $|K_1| = |K_2| =  7$, there are eight patterns with more than one internal vertex in the intersection of $K_1$ and $K_2$. Four of them do not actually form (11) and none of the three operations was used on them which is a contradiction.
The other four contain 4-cycles that do not bound a face which is also a contradiction.
The program including the eight patterns is available with all the other programs used in this paper.
\end{proof}

\section{Proof of Theorem~\ref{thm:9cycleprecise}}
Let $\Ss_{k}$ be the set of possible multisets of lengths of $5^+$-faces in a connected plane graph of girth at least $4$ where the length of the precolored face is $k$.
The result of Dvo\v{r}\'ak, Kr\'{a}\soft{l}, and Thomas~\cite{trfree4} implies among others that
$\Ss_6 = \{\emptyset\}$, $\Ss_7 = \{\{5\}\}$, $\Ss_8 = \{\emptyset, \{6\},\{5,5\}\}$, and  $\Ss_9 = \{\{7\},\{5\},\{6,5\},\{5,5,5\}\}$.

By the previous paragraph, we have four cases to consider when $C$ has length $9$.
The case of one $7$-face was already resolved by Dvo\v{r}\'ak and Lidick\'{y}~\cite{00DvLi}, and it is described in Theorem~\ref{thm:9cycleprecise}(b). We restate the result from \cite{00DvLi} in the next subsection as Theorem~\ref{thm:n-2}.
We resolve the remaining three cases in Lemmas~\ref{l56a}, \ref{l56b}, \ref{l5a}, \ref{l5}, \ref{lall5a}, and \ref{lall5}
in the following three subsections.
In order to simplify the cases, we first solve the case when $C$ has a chord.

If $G$ is $C$-critical and $C$ has a chord, then
Lemma~\ref{lem:inside} implies
that $G$ can be obtained by identifying two edges of the outer faces of two different smaller critical graphs or cycles.
Lemma~\ref{lem:chord} shows that the converse is also true.

\begin{lemma}\label{lem:chord}
Let $G_i$ be either a cycle $C_i$ or a triangle-free plane $C_i$-critical graph, where $|C_i| \geq 4$ for $i \in \{1,2\}$.
Let $G$ be the graph obtained 
by identifying $e_1 \in E(C_1)$ and $e_2 \in E(C_2)$ and let $C$ be the longest cycle formed by $E(C_1) \cup E(C_2)$ after the identification.
 Then $G$ is $C$-critical, where $|C|=|C_1|+|C_2|-2$.
\end{lemma}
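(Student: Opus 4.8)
\noindent\emph{Proof plan.} Fix the natural plane embedding of $G$: writing $e_i=a_ib_i$, identify so that $a:=a_1=a_2$ and $b:=b_1=b_2$, let $P_i:=C_i-e_i$ be the $a,b$-path of length $|C_i|-1$, and draw $G$ with the cycle $C:=P_1\cup P_2$ bounding the outer face and the chord $e:=ab$ in the interior, so that $G_i$ is exactly the subgraph of $G$ in the closed disc bounded by $C_i$. Since $V(G_1)\cap V(G_2)=\{a,b\}$, the paths $P_1,P_2$ are internally disjoint and $C$ really is a cycle; the cycles formed by $E(C_1)\cup E(C_2)$ are $C_1$, $C_2$, and $C$, and as $|C_i|\ge 4$ we get $|C|=|C_1|+|C_2|-2>\max\{|C_1|,|C_2|\}$, so $C$ is the longest, which settles the length claim. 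Moreover no edge of $G$ joins $V(G_1)\setminus\{a,b\}$ to $V(G_2)\setminus\{a,b\}$, so every triangle of $G$ lies in the triangle-free graph $G_1$ or $G_2$; hence $G$ is triangle-free, and it is connected (being a union of the connected graphs $G_1,G_2$). The backbone of the proof is the elementary observation that, because $G_1$ and $G_2$ overlap only in $e$, a proper $3$-colouring $\varphi$ of $C$ extends to $G$ iff $\varphi(a)\ne\varphi(b)$ and $\varphi$ restricted to $V(C_i)=V(P_i)$ extends to $G_i$ for $i=1,2$; the same compatibility statement holds with $G$ replaced by $G-e'$ for an interior edge $e'$, the relevant factor being replaced by the corresponding reduced graph and, in the case $e'=e$, the condition $\varphi(a)\ne\varphi(b)$ being dropped. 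I will also repeatedly use that a prescribed proper colouring of two adjacent vertices of a triangle-free plane graph extends to a proper $3$-colouring of the whole graph, by Theorem~\ref{thm-grotzsch} composed with a colour permutation.

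Next I will show that not every $3$-colouring of $C$ extends to $G$ and reduce criticality to single edge deletions. As $a$ and $b$ are non-adjacent on $C$ (each $a,b$-arc has length at least $3$), there is a proper $3$-colouring of $C$ with $\varphi(a)=\varphi(b)$, and it does not extend to $G$ since $ab=e\in E(G)$. For criticality it then suffices to find, for every $e'\in E(G)\setminus E(C)$, a proper $3$-colouring of $C$ that extends to $G-e'$ but not to $G$: any proper subgraph $H$ with $C\subseteq H\subsetneq G$ has $E(H)\subsetneq E(G)$ (as $G$ is connected with more than one vertex), so $H\subseteq G-e'$ for some $e'\in E(G)\setminus E(H)\subseteq E(G)\setminus E(C)$, and a colouring of $C$ extending to $G-e'$ restricts to one extending to $H$.

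The crucial case is deletion of the chord, $e'=e$. Then $G-e=(G_1-e_1)\cup(G_2-e_2)$ with the two pieces sharing only $\{a,b\}$, so by the backbone observation it suffices to produce, for each $i$, a proper $3$-colouring of $G_i-e_i$ giving $a$ and $b$ the same colour: permuting colours so the two colourings agree on $\{a,b\}$ and gluing then yields a colouring of $C$ with $\varphi(a)=\varphi(b)$ extending to $G-e$ but not to $G$. Equivalently, the plane graph $H_i$ obtained from $G_i-e_i$ by identifying $a$ and $b$ (plane because $a,b$ lie on its outer face) must be $3$-colourable. If $G_i=C_i$, this is immediate since $H_i$ is a cycle of length $|C_i|-1\ge 3$. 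If $G_i$ is $C_i$-critical, then $|C_i|\ge 6$ (there is no $C$-critical triangle-free plane graph with $|C|\le 5$), and every $4$-cycle of $G_i$ bounds a face: a non-facial one $K$ would, by Lemma~\ref{lem:inside}, force the triangle-free plane subgraph $H\supsetneq K$ in the closed disc it bounds to be $K$-critical, contradicting that every proper $3$-colouring of a $4$-cycle extends into any triangle-free plane graph having it as a face. Since $C_i$ is not a $4$-cycle, $e_i$ therefore lies on at most one $4$-cycle of $G_i$ — namely the internal face incident to $e_i$, if that face has length $4$ — and, as the triangles of $H_i$ are precisely those arising from $a,b$-paths of length $3$ in $G_i-e_i$, i.e.\ from $4$-cycles of $G_i$ through $e_i$, the graph $H_i$ has at most one triangle and is $3$-colourable by the Gr\"unbaum--Aksenov theorem~\cite{1974Ak,1997Bo,1963Gr}. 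I expect this step to be the only genuinely delicate point.

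Finally, let $e'\in E(G)\setminus(E(C)\cup\{e\})$. Then $e'\in E(G_j)\setminus E(C_j)$ for exactly one $j$, forcing $G_j$ to be $C_j$-critical rather than a cycle; say $j=1$. By criticality there is a proper $3$-colouring $\phi$ of $C_1$ extending to $G_1-e'$ but not to $G_1$, and $\phi(a)\ne\phi(b)$. Pick a proper $3$-colouring $\psi_2$ of $G_2$ with $\psi_2(a)=\phi(a)$ and $\psi_2(b)=\phi(b)$, and let $\varphi$ be the colouring of $C$ agreeing with $\phi$ on $V(P_1)$ and with $\psi_2$ on $V(P_2)$; it is a well-defined proper colouring of $C$ since the two parts agree on $\{a,b\}$. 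By the backbone observation $\varphi$ extends to $G-e'$ (glue the extension of $\phi$ to $G_1-e'$ with $\psi_2$) but not to $G$ (since $\phi$ does not extend to $G_1$). Together with the length established at the outset, this shows that $G$ is $C$-critical.
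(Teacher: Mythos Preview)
Your proof is correct and, for the case $e'\in E(G_i)\setminus\{e\}$, proceeds exactly as the paper does. The interesting difference is in the chord case $e'=e$. The paper disposes of this in one line by invoking a theorem of Aksenov, Borodin, and Glebov~\cite{2002AkBoGl} (later simplified in~\cite{2014BoKoLiYa}): for any edge $uv$ of a triangle-free planar graph $G$, there is a proper $3$-colouring of $G-uv$ giving $u$ and $v$ the same colour. You instead argue directly from ingredients already present in the paper: by Lemma~\ref{lem:inside} a $C_i$-critical triangle-free plane graph has no non-facial $4$-cycle, so the edge $e_i$ lies on at most one $4$-cycle of $G_i$ (the internal face, if it happens to be a $4$-face), and therefore identifying $a$ and $b$ in $G_i-e_i$ produces a planar graph with at most one triangle, which is $3$-colourable by the Gr\"unbaum--Aksenov theorem. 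This is a genuinely different route: it trades a single black-box citation for a short structural argument that exploits the fact that each piece $G_i$ is itself critical (or a bare cycle). Your approach is more self-contained relative to the paper's stated background; the paper's approach is shorter and applies even without the criticality hypothesis on the pieces, since the Aksenov--Borodin--Glebov result holds for an arbitrary edge of an arbitrary triangle-free planar graph.
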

\begin{proof}
Let $e\in E(G)\setminus E(C)$.

Suppose first that $e \in E(G_i)-e_i$ for some $i \in \{1,2\}$.
Since $G_i$ is either a cycle or a $C_i$-critical triangle-free plane graph and it contains $e$ that is not on the boundary, $G_i$ is $C_i$-critical. Hence there exists a $3$-coloring $\varphi$
of $C_i$ that extends to a proper $3$-coloring of $G_i-e$ but does
not extend to a proper $3$-coloring of $G_i$. 
Since $G_{3-i}$ is triangle-free, there exists a proper 3-coloring $\varrho$ of $G_{3-i}$ by Gr\"otzsch's Theorem~\cite{grotzsch1959}.
By permuting colors we may assume that $\varphi$ and $\varrho$ agree on $e_i$ and $e_{3-i}$. This gives a proper 3-coloring of $C$ showing that $G$ is $C$-critical with respect to $e$.

The other case is when $e$ is the result of the identification of $e_1$ and $e_2$.
Let $u,v$ be the vertices of $e$. Since $G-e$ is a triangle-free planar graph,
there exists a proper 3-coloring $\varphi$ of $G-e$ such that $\varphi(u)=\varphi(v)$; 
this is a result of Aksenov et al.~\cite{2002AkBoGl} that was simplified by Borodin et al.~\cite{2014BoKoLiYa}.
Let $\varrho$ be the restriction of $\varphi$ to $C$. 
Clearly, $\varrho$ can be extended to a proper 3-coloring of $G-e$ but not to a proper 3-coloring of $G$.
\end{proof}

Therefore, we can enumerate $C$-critical triangle-free plane graphs $G$ where $C$ has a chord and has length $9$ by identifying edges from two smaller graphs with outer faces of lengths either 4 and 7 or 5 and 6. 
Since there are no $C$-critical graphs when $|C|\in\{4, 5\}$, 
we just use a $4$-cycle and a $5$-cycle.
The resulting graphs are depicted in Figure~\ref{fig-all-critical-graphs} (a), (b), (c1),  (c2), (f1), and (f2),
where some of the vertices may be identified.

\subsection{One 7-face}

The case of one $7$-face is solved by a more general result from \cite{00DvLi}. 
The result works for graphs with an outer face of length  $k$ and one internal face of length $k-2$.
Let $r(k)=0$ if $k\equiv 0\pmod{3}$, $r(k)=2$ if $k\equiv 1\pmod{3}$, and $r(k)=1$ if $k\equiv 2\pmod{3}$.

\begin{theorem}[\cite{00DvLi}]\label{thm:n-2}
Let $G$ be a connected triangle-free plane graph with outer face bounded by a $7^+$-cycle $C$ of length $k$.
Suppose that $f$ is an internal face of $G$ of length $k-2$ and that all other internal faces
of $G$ are $4$-faces.  The graph $G$ is $C$-critical if and only if
\begin{itemize}
\item[(a)] $f\cap C$ is a path of length at least $r(k)$ (possibly empty if $r(k)=0$),
\item[(b)] $G$ contains no separating $4$-cycles, and
\item[(c)] for every $(k-1)^-$-cycle $K\neq f$ in $G$, the interior of $K$ does not contain $f$.
\end{itemize}
Furthermore, in a graph satisfying these conditions, a precoloring $\psi$ of $C$ extends to a $3$-coloring of
$G$ if and only if $E(C)\setminus E(f)$ contains both a source edge and a sink edge with respect to $\psi$.
\end{theorem}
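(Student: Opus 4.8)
The plan is to route everything through the flow/layout correspondence and Lemma~\ref{lemma5}: I would prove the ``furthermore'' sentence first and then read off both directions of the equivalence from it. Throughout, let $D_f$ be the boundary cycle of $f$ (which is a cycle under the hypotheses), let $P_f:=E(f)\cap E(C)$, put $j:=|P_f|$, let $Q_f$ be the complementary path of $D_f$, of length $k-2-j$, and write $X:=E(C)\setminus E(f)$, so $|X|=k-j$. Since $f$ is the only internal face of length at least $5$, a layout is determined by its value on $f$, and it is $\psi$-balanced precisely when $q(f)=\ntt-\nss$; this is an admissible layout value (divisible by $3$, of the parity of $k-2$, of absolute value at most $k-2$) exactly when $|\ntt-\nss|\le k-2$. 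I would also record the elementary count that a proper $3$-colouring of the $k$-cycle $C$ satisfies $\nss-\ntt=2\nss-k\equiv 0\pmod 3$, so $|\nss-\ntt|\le k-2r(k)$ with equality attained; here $r(k)$ is exactly the gap forced by imposing the congruences modulo $2$ and $3$ simultaneously.

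\emph{The ``furthermore''.} For the forward implication I would take a $3$-colouring $\varphi$ of $G$ and examine $\delta(f)$ in the associated flow: each edge of $P_f$ (lying on both $C$ and $D_f$) contributes to $\delta(f)$ with the sign opposite to its source/sink status, and each of the $k-2-j$ edges of $Q_f$ contributes $\pm1$, so comparing with $\delta(f)=q(f)=\ntt-\nss$ gives $\nt{X}-\ns{X}=-(\text{the }Q_f\text{-contribution})$, whence $|\nt{X}-\ns{X}|\le k-2-j<k-j=|X|$. Thus $X$ is not entirely source edges and not entirely sink edges, i.e.\ it contains one of each. For the converse, assume $X$ contains a source edge and a sink edge; then $\nss<k$ and $\ntt<k$, so (by parity) $|\nss-\ntt|\le k-2$, $q(f)=\ntt-\nss$ is an admissible $\psi$-balanced layout, and Lemma~\ref{lemma5} applies. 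If $\psi$ did not extend we would obtain a subgraph $K_0$ as in that lemma. In case (ii), $m$ is either $0$ (impossible, since $0>|K_0|$ cannot hold) or $q(f)$, and then $f$ lies inside the non-facial cycle $K_0$ with $|K_0|<|q(f)|\le k-2$, so $K_0$ is a non-facial $(k-1)^-$-cycle other than $f$ with $f$ in its interior, contradicting~(c). In case (i), with $K_0$ a path, $P$ an arc of $C$ joining its ends, and $P'$ the complementary arc, substituting $q(f)=\ntt-\nss$ shows that the quantity $\ns{P}+m-\nt{P}$ from Lemma~\ref{lemma5}(i) equals $\nt{P'}-\ns{P'}$ when $f$ lies in the interior of $P+K_0$, and equals $\ns{P}-\nt{P}$ otherwise. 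Since $f$ lies in the interior of exactly one of $P+K_0$ and $P'+K_0$, I would pass to whichever contains $f$; the strict inequality of Lemma~\ref{lemma5}(i) together with the trivial bound $|\nt{Z}-\ns{Z}|\le|Z|$ for the relevant arc $Z$ then forces that cycle to have length strictly below $|P|+|P'|=k$ --- again a non-facial $(k-1)^-$-cycle other than $f$ with $f$ in its interior, contradicting~(c). (The borderline case where that cycle is $D_f$ is precisely $K_0=Q_f$, $P=P_f$, where the inequality simply fails because $X$ has a source and a sink.) Hence $\psi$ extends.

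\emph{Reading off the theorem.} For ``only if'', assume $G$ is $C$-critical. By Lemma~\ref{lem:inside}, for every non-facial $(k-1)^-$-cycle $K$ the subgraph enclosed by $K$ is $K$-critical; a separating $4$-cycle would then be the outer face of a $K$-critical graph on a $4$-cycle, which does not exist, giving~(b); a $(k-1)^-$-cycle $K\ne f$ with $f$ in its interior would give a $K$-critical graph with $S=\{k-2\}$ and $|K|\le k-1<k=\ell(\{k-2\})$, which does not exist, giving~(c). A short argument from (b) and (c) shows $f\cap C$ is a single path, so $j$ is defined; and $C$-criticality supplies a precolouring $\psi$ not extending to $G$, so by the forward ``furthermore'' $X$ is all-source or all-sink, whence $|\nss-\ntt|\ge k-2j$, which with $|\nss-\ntt|\le k-2r(k)$ forces $j\ge r(k)$; this is~(a). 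For ``if'', assume (a)--(c); using (a) I would build a proper $\psi$ in which every edge of $X$ is a source edge (make all of $X$ into ``$+1$'' steps, then close up along $P_f$ --- feasible exactly when $j\ge r(k)$, by the count above), which by the converse ``furthermore'' does not extend to $G$. It remains to check edge-minimality: for each $e\in E(G)\setminus E(C)$, I would show every precolouring of $C$ extends to $G-e$ (so in particular $\psi$ does). Here $S(G-e)$ is $\{k-2,6\}$ or $\{k\}$, whose face-excess $\sum_g(|g|-4)=k-4$ exceeds the bound $|C|-6=k-6$ satisfied by every $C$-critical triangle-free plane graph with outer $k$-cycle (equivalently $\ell(F)\ge\sum_{i\in F}(i-4)+6$), while conditions (b) and (c) for $G$ are exactly what prevent $G-e$ from concealing a $C$-critical subgraph inside a short separating cycle; together these show $G-e$ has no colouring obstruction. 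Hence $G$ is $C$-critical.

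The converse half of the characterisation is somewhat delicate, since one must extract a contradiction from Lemma~\ref{lemma5}'s subgraph in every case, but as sketched it all funnels through~(c). I expect the genuine obstacle to be the edge-minimality step in the ``if'' direction: showing that $G-e$ carries no colouring obstruction really uses conditions (b) and (c) for $G$, and is most naturally proved by induction on $|E(G)|$ together with the known classification of $C$-critical triangle-free plane graphs with shorter precoloured faces.
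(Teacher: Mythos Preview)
The paper does not prove Theorem~\ref{thm:n-2}; it is quoted from Dvo\v{r}\'ak--Lidick\'y~\cite{00DvLi} and applied as a black box in the case $k=9$. There is therefore no in-paper argument to compare against.

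That said, your route is sound and, with the details filled in, gives a complete proof along the lines one would expect from the flow machinery of~\cite{00DvLi}. The ``furthermore'' computation via $\delta(f)=n^t-n^s$ and Lemma~\ref{lemma5} is correct, and deducing (b) and (c) from $C$-criticality via Lemma~\ref{lem:inside} together with $\ell(\{k-2\})=k$ is exactly right. The connectedness of $f\cap C$ really does follow from (c) alone: if $f\cap C$ had at least two components, then for each arc $A_i$ of $C\setminus f$ the cycle obtained by splicing $A_i$ with the complementary arc of $D_f$ has length $k-2+(|A_i|-|Q_i|)$ and contains $f$ in its interior; since the numbers $|A_i|-|Q_i|$ sum to $2$, at least one such cycle has length at most $k-1$ and is distinct from $D_f$, contradicting (c).

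Your hesitation about edge-minimality is unnecessary---the step closes cleanly without induction. Suppose some precolouring $\psi$ fails to extend to $G-e$ and take a $C$-critical $H\subseteq G-e$. The face of $H$ whose interior contains $f$ is bounded by a cycle $K\subseteq G$; by (c), either $K=D_f$ or $|K|\ge k$. The latter forces face-excess at least $k-4>k-6$, impossible for a $C$-critical graph with outer face of length $k$. If $K=D_f$ (which cannot happen when $e\in E(D_f)$), then the excess $k-6$ is already exhausted by this single face, so $S(H)=\{k-2\}$ and every other face of $H$ has length $4$; any edge of $E(G)\setminus E(H)$ then lies in the interior of some $4$-face of $H$, producing a separating $4$-cycle in $G$ and contradicting (b). Thus every precolouring extends to $G-e$, and in particular your colouring with $X$ all source edges witnesses criticality at every $e$.
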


In our case, we apply Theorem~\ref{thm:n-2} with $k=9$. Since $r(9)=0$, the $7$-face does not have to share any edges with the outer face. The description is in Theorem~\ref{thm:9cycleprecise}(b) and it is depicted in Figure~\ref{fig-all-critical-graphs}(b).


\subsection{One 5-face and one 6-face}

\begin{lemma}\label{l56a}
Let $G$ be a connected triangle-free plane graph with outer face bounded by a chordless $9$-cycle $C$.
Moreover, let $G$ contain one $5$-face $f_5$ and one $6$-face $f_6$, all other internal faces are $4$-faces, and all non-facial $8^-$-cycles $K$ in $G$ bound $K$-critical subgraphs.
If $\psi$ is a 3-coloring of $C$ that does not extend to a 3-coloring of $G$, then $\psi$ extends to
$G-e$ for every $e \in E(G)\setminus E(C)$.
\end{lemma}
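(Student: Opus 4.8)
The statement says: if $\psi$ is a $3$-coloring of $C$ that does not extend to $G$, then $\psi$ does extend to $G-e$ for every chord-free edge $e\in E(G)\setminus E(C)$. In other words, we must rule out the possibility that $G$ is $C$-critical ``witnessed by $e$'' while having this particular face vector $S(G)=\{5,6\}$ together with the stated hypotheses on non-facial $8^-$-cycles. So the plan is to argue by contradiction: assume $\psi$ fails to extend to $G-e$ as well as to $G$, which (after possibly passing to the subgraph $G'\subseteq G$ consisting of $C$ together with the component structure needed) means $\psi$ witnesses that some subgraph is $C$-critical, and then derive a contradiction with the constraints coming from Lemma~\ref{lemma5}.

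The concrete strategy is to choose a convenient $\psi$-balanced layout $q$ and apply Lemma~\ref{lemma5} to the graph $H = G - e$ (or rather argue that if $\psi$ does not extend to $G-e$, then since $G-e$ is still a connected triangle-free plane graph with outer face $C$, the lemma applies). First I would record the arithmetic: $|C|=9$, and since $q(f)\in\{-3,3\}$ for the $5$-face, $q(f)\in\{-6,0,6\}$ for the $6$-face, and $q(f)=0$ for all $4$-faces, the total $m$ of $q$-values lies in a small explicit set $\{-9,-3,3,9,\pm 3\}$; and $\psi$-balance requires $n^s + m = n^t$ with $n^s+n^t \le 9$ and $n^s\equiv n^t\pmod 2$... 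I'd pick $q$ so that $m = n^t - n^s$ is achievable, which constrains which of the few layouts are $\psi$-balanced. Then Lemma~\ref{lemma5} hands us a subgraph $K_0$ which is either (i) a path with both ends on $C$, no internal vertex on $C$, with $|n^s+m-n^t| > |K_0|$ for the appropriate subpath $P$ of $C$ (so in particular $|P|+|m| > |K_0|$), or (ii) a cycle with at most one vertex on $C$ and $|m'|>|K_0|$ where $m'$ sums $q$-values inside $K_0$.

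The heart of the argument is then a short case analysis on $K_0$. In case (ii), $K_0$ is a non-facial cycle (it can't be a face since $|m'|>|K_0|\ge 4$ forces at least two $5^+$-faces strictly inside, or a $5$- or $6$-face interior, all impossible with the face budget $\{5,6\}$), and its interior contains $5^+$-faces whose $q$-values sum to more than $|K_0|$; but the only $5^+$-faces available are $f_5$ and $f_6$, giving $|m'|\le 9$, so $|K_0|\le 8$, and then the hypothesis that every non-facial $8^-$-cycle bounds a $K_0$-critical subgraph — combined with the known values $\ell(\{5\})=7$, $\ell(\{6\})=8$, $\ell(\{5,6\})=9$ from \cite{trfree4} — forces a contradiction (the interior of an $8^-$-cycle simply cannot house the faces needed to make $|m'|>|K_0|$). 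In case (i), $K_0$ is an $x,y$-path, $C$ is split into two arcs by $x,y$, and one computes that $|P|+|m|>|K_0|$ with $|P|\le 9-|K_0|$ (since $K_0$ has both ends on the chordless cycle $C$ and at least one internal vertex, so $|K_0|\ge 2$ and the two arcs of $C$ have total length $9$); combined with $|m|\le 9$ and the parity/divisibility restrictions on $m$, only a few $(|K_0|,|P|,m)$ triples survive, and for each we invoke Lemma~\ref{lem:inside} on the cycle $P+K_0$ (an $8^-$-cycle when $|K_0|$ and $|P|$ are both small) to get that its interior is critical, then check against the catalogue of critical configurations with few $5^+$-faces and hit a contradiction. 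I would organize the surviving triples by $|K_0|$ and handle $|K_0|\in\{2,3,4\}$ and the larger values separately, the smaller ones being the delicate ones because the cycle $P+K_0$ is short enough that its criticality is very restrictive.

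The main obstacle I anticipate is the bookkeeping in case (i): one must simultaneously track (a) which of the faces $f_5,f_6$ lie inside the cycle $P+K_0$ versus outside, (b) the exact value of $m$ for that side, (c) the length constraint $|P|+|m|>|K_0|$, and (d) whether the resulting $P+K_0$ is short enough (length $\le 8$) to apply the non-facial-criticality hypothesis, and then to argue that the inside region — which is itself a triangle-free plane graph with outer cycle $P+K_0$ and a prescribed sub-multiset of $\{5,6\}$ — cannot be critical because its outer face is too long relative to $\ell$ of that sub-multiset, or too short and hence forced to contain a chord or separating short cycle contradicting the hypotheses. Essentially all the real content is the compatibility check between the face budget $\{5,6\}$, the length $9$ of $C$, and the known thresholds $\ell(\{5\})=7$, $\ell(\{5,6\})=9$; once that is laid out, each case closes quickly. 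I would also remark that the choice of which $\psi$-balanced layout $q$ to feed into Lemma~\ref{lemma5} should be made adaptively — if one layout produces a $K_0$ we cannot immediately kill, a different choice of $q$-values on $f_5$ and $f_6$ (flipping a sign from $+3$ to $-3$, say) often moves $m$ into a range where no valid $K_0$ exists at all, and I expect that for this face vector one can in fact always find such a layout, which is the cleanest way to finish.
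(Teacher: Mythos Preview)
Your proposal has a genuine gap: it conflates the graph $G$ with the graph $G-e$, and in doing so ends up attacking the wrong statement.

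You say you will apply Lemma~\ref{lemma5} to $H=G-e$, but then immediately record ``$q(f)\in\{-3,3\}$ for the $5$-face, $q(f)\in\{-6,0,6\}$ for the $6$-face'' --- that is the face structure of $G$, not of $G-e$. Removing $e$ merges two faces of $G$, so $S(G-e)$ depends on $e$ and is one of $\{5,6,6\}$, $\{6,7\}$, $\{5,8\}$, or $\{9\}$; you never compute or use this. Worse, your proposed ``cleanest way to finish'' --- finding, for the face vector $\{5,6\}$, a $\psi$-balanced layout admitting no valid $K_0$ --- would prove that $\psi$ \emph{extends to $G$}, directly contradicting the hypothesis of the lemma. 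Indeed, since $\psi$ does not extend to $G$ by assumption, Lemma~\ref{lemma5} guarantees a $K_0$ for \emph{every} $\psi$-balanced layout of $G$; this line cannot close. The entire $K_0$ case analysis you sketch is really the machinery for Lemma~\ref{l56b} (the structural characterization of which $G$ are $C$-critical), not for the present lemma.

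The paper's proof is short and does not touch Lemma~\ref{lemma5} at all. It passes to a $C$-critical subgraph $H\subseteq G-e$ (so $S(H)\in\Ss_9=\{\{5\},\{7\},\{5,6\},\{5,5,5\}\}$), observes that each internal face of $H$ is bounded by a cycle of $G$ whose interior, by the non-facial $8^-$-cycle hypothesis and Lemma~\ref{lem:inside}, carries a known critical structure, and then counts: every odd face of $H$ swallows exactly one odd face of $G$ (namely $f_5$) and every even face of $H$ swallows none. Since $G$ has exactly one odd $5^+$-face ($f_5$) and one even $5^+$-face ($f_6$), the only member of $\Ss_9$ compatible with this is $\{5,6\}$, forcing $H=G$ --- but $H\subseteq G-e\subsetneq G$, a contradiction. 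This parity-and-catalogue argument is the missing idea in your plan.
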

\begin{proof}
Let $e\in E(G)\setminus E(C)$. 
We want to show that $\psi$ extends to a proper $3$-coloring of $G-e$.
Suppose that $\psi$ does not extend to a $3$-coloring of $G-e$. 
Then there exists a $C$-critical subgraph $H$ of $G-e$, such that the $3$-colorings of $C$ that extend to $G-e$ are exactly the $3$-colorings of $C$ that extend to $H$.
Since $H$ is $C$-critical, its multiset of  $5^+$-faces is one of $\{5\}, \{7\}, \{5,6\}, \{5,5,5\}$.
Since all non-facial $8^-$-cycles $K$ in $G$ bound $K$-critical subgraphs, Lemma~\ref{lem:inside} implies that 
every $5$-face of $H$ is a $5$-face of $G$, 
every $7$-face of $H$ contains exactly one $5$-face of $G$, 
and a $6$-face of $H$ contains no $5$-faces in the interior.
Hence, $H$ contains one odd $5^+$-face and one even $6^+$-face or one odd $9^+$-face, and the only option for the multiset of $5^+$-faces of $H$ is $\{5,6\}$.
That would mean that $G$ is the same graph as $H$, and this is a contradiction.
\end{proof}

Notice that Lemma~\ref{l56a} implies that in order to prove $C$-criticality, it is enough to find one
coloring that does not extend. In Figure~\ref{fig-all-critical-graphs} we depict colorings that do not extend.

Now we prove the other direction of the Theorem~\ref{thm:9cycleprecise}. We start by the following lemma that we
prove separately for future reference and then continue with the main part Lemma~\ref{l56b}.

\begin{lemma}\label{clno9}
Let $G$ be a connected triangle-free plane graph with outer face bounded by a chordless $9$-cycle $C$.
Moreover, let $G$ contain one $5$-face $f_5$ and one $6$-face $f_6$ and all other internal faces are $4$-faces.
If  $G$ is $C$-critical $\psi$ is a $3$-coloring of $G$ with 9 source edges then $\psi$ extends to a 3-coloring of $G$.
\end{lemma}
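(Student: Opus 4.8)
The plan is to assume that $\psi$ does not extend to $G$ and to derive a contradiction from Lemma~\ref{lemma5}. Since every internal $4$-face $f$ forces $q(f)=0$, a layout is determined by $q(f_5)\in\{-3,3\}$ and $q(f_6)\in\{-6,0,6\}$, and the balance condition $n^s+q(f_5)+q(f_6)=n^t$ becomes $q(f_5)+q(f_6)=-9$ (because $n^s=9$ and $n^t=0$); its unique solution is $q(f_5)=-3$, $q(f_6)=-6$, so there is exactly one $\psi$-balanced layout $q$, which we fix. For a cycle $L$ in $G$ write $m_L$ for the sum of the $q$-values over the faces in the interior of $L$; then $m_L$ equals $0$, $-3$, $-6$, or $-9$ according to which of $f_5,f_6$ lies inside $L$, so $m_L\le 0$ and $|m_L|\in\{0,3,6,9\}$. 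Applying Lemma~\ref{lemma5} with this $q$ produces a subgraph $K_0\subseteq G$ of type (i) or (ii).

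Type (ii) is quickly ruled out. Here $K_0$ is a cycle with $|m_{K_0}|>|K_0|$, and triangle-freeness gives $|K_0|\ge 4$, so $|m_{K_0}|\in\{6,9\}$ and $f_6$ (and possibly $f_5$) lies inside $K_0$; in particular $K_0$ is not facial, so by Lemma~\ref{lem:inside} the subgraph drawn in the closed disk bounded by $K_0$ is $K_0$-critical with set of large faces $\{6\}$ or $\{5,6\}$, whence $|K_0|\ge\ell(\{6\})=8>|m_{K_0}|$, a contradiction (using $\ell(\{6\})=8$ and $\ell(\{5,6\})=9$ from the paragraph after Lemma~\ref{lemma5}).

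The real work is type (i), where $K_0$ is a path with distinct ends $x,y$ on $C$ and no internal vertex on $C$. Let $P$ and $P'$ be the two $x,y$-subpaths of $C$, so $|P|+|P'|=9$, set $D=P+K_0$, $D'=P'+K_0$, $m=m_D$, $m'=m_{D'}$, and note $|m|+|m'|=9$ since each of $f_5,f_6$ lies inside exactly one of $D,D'$. Because every edge of $C$ is a source edge, the number of source edges of $P$ is $|P|$ and the number of sink edges is $0$, so Lemma~\ref{lemma5}(i) reads $|K_0|<\bigl|\,|P|+m\,\bigr|=\bigl|\,|P|-|m|\,\bigr|$, and the same inequality holds with $(P',m')$ in place of $(P,m)$ because $(|P|+m)+(|P'|+m')=0$. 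The two structural facts to record are: (a) if a cycle $L$ is not facial then by Lemma~\ref{lem:inside} the subgraph it bounds is $L$-critical, and since no triangle-free plane graph with a precolored cycle of length at most $5$ is critical, while $\ell(\{5\})=7$, $\ell(\{6\})=8$, $\ell(\{5,6\})=9$, we get $|L|\ge 6+|m_L|/3$; (b) if $L$ is facial and $L\ne C$ then $L$ bounds a single internal $4$-, $5$-, or $6$-face, forcing $(|L|,|m_L|)\in\{(4,0),(5,3),(6,6)\}$. Substituting $|K_0|\le\bigl|\,|P|-|m|\,\bigr|-1$, $|D|=|P|+|K_0|$, $|D'|=(9-|P|)+|K_0|$, and $1\le |P|\le 8$ into (a)--(b), I would check in turn: the case $|P|<|m|$ is impossible (it forces $|D|\le |m|-1$, too small for either alternative); if $D$ is not facial then $|m|\le 6$, and then $D'$ cannot be non-facial (its lower bound $6+|m'|/3$ exceeds the upper bound $8-|m|$), while $D'$ facial forces $|m|\in\{3,6\}$ and a short computation contradicts $|K_0|\le\bigl|\,|P|-|m|\,\bigr|-1$; and if $D$ is facial, each of $(4,0)$, $(5,3)$, $(6,6)$ gives an impossible inequality or forces $|K_0|=1$ with $x$ and $y$ non-adjacent on $C$, i.e.\ a chord, excluded by hypothesis. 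Hence type (i) cannot occur either, contradicting Lemma~\ref{lemma5}; therefore $\psi$ extends.

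I expect the only genuine difficulty to be the finite case analysis in type (i), which is essentially bookkeeping; its fiddly points are keeping straight which of $f_5$ and $f_6$ lies in which of the two regions (equivalently, whether $(|m|,|m'|)$ is $(0,9)$, $(3,6)$, $(6,3)$, or $(9,0)$) and discarding the degenerate configurations where $|K_0|=1$ (a chord, or an edge of $C$, for which the inequality never fires) or where $D$ or $D'$ degenerates because a subpath of $C$ has length $1$.
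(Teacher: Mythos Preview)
Your proposal is correct and follows essentially the same approach as the paper: both identify the unique $\psi$-balanced layout $q(f_5)=-3$, $q(f_6)=-6$, apply Lemma~\ref{lemma5}, dispose of the cycle case via Lemma~\ref{lem:inside} and the $\ell$-values, and in the path case combine the Lemma~\ref{lemma5} inequality with lower bounds on $|D|$ and $|D'|$ coming from Lemma~\ref{lem:inside}. The only difference is organizational: the paper splits the path case according to which of $f_5,f_6$ lies in which region (``both on one side'' versus ``one on each side'') and derives a direct numerical contradiction in each, whereas you run a slightly longer case analysis on whether $D$ and $D'$ are facial and on the pair $(|m|,|m'|)$; both routes amount to the same short arithmetic, and your uniform bound $|L|\ge 6+|m_L|/3$ (using $\ell(\emptyset)=6$, $\ell(\{5\})=7$, $\ell(\{6\})=8$, $\ell(\{5,6\})=9$) is a clean way to package it.
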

\begin{proof}
Suppose for a contradiction that $\psi$ does not extend to a 3-coloring of $G$.
Hence there is just one $\psi$-balanced layout $q$ with $q(f_5)=-3$ and $q(f_6)=-6$. 
Let $K_0$ be obtained from Lemma~\ref{lemma5}.

If $K_0$ is a cycle, then Lemma~\ref{lemma5} implies $9 > |K_0|$.
Let $m$ denote the sum of the $q$-values of the faces in the interior of $K_0$.
By Lemma~\ref{lemma5}, $\vert m \vert > k_0$.
If both $f_5,f_6$ are in the interior of $K_0$, then $\vert m \vert = \vert q(f_5)+q(f_6) \vert = 9$, contradicting the fact that $\vert m \vert > k_0$ since $k_0\geq \ell(\{5,6\})=9$. 
If $f_5$ is in the interior of $K_0$, but  $f_6$ is not, then $\vert m \vert = 3$, while $\ell(\{5\})=5$, a contradiction again.  
Similarly, we obtain a contradiction when $f_6$ is in the interior of $K_0$ but $f_5$ is not, since $\ell(\{6\})=6$ and $\vert m \vert \leq 6$. 

Therefore $K_0$ is always a path joining two distinct vertices of $C$.
These endpoints of $K_0$ partition the edges of $C$ into two paths $X$ and $Y$ intersecting at the endpoints of $K_0$.
For $Z \in \{X,Y\}$, recall that $\ns Z$ and $\nt Z$ denotes the number of source edges and sink edges, respectively, among the edges of $Z$ in coloring $\psi$. 
The described structure is shown in Figure~\ref{fig-65}.
Let $R_X$ and $R_Y$ be the subgraph of $G$ induced by vertices in the closed interior of the cycle formed by $K_0,X$ and $K_0,Y$ respectively.

Note that $\ns X+\ns Y=9$ and $\nt X+\nt Y=0$. 
If both $f_5$, $f_6$ belong to $R_X$, then Lemma~\ref{lemma5} implies $9 -n^s_X >   k_0$ and
Lemma~\ref{lem:inside} implies $n^s_X+k_0 \geq 9$ since $\ell(\{5,6\})=9$, which is a contradiction.
By symmetry, $R_Y$ does not contain both $f_5$ and $f_6$.

Without loss of generality, suppose $f_6$ belongs to $R_X$ and $f_5$ belongs to $R_Y$.
Lemma~\ref{lem:inside} implies that $\ns X + k_0 \geq 6$, which gives $k_0 \geq 6 - \ns X$.
Lemma~\ref{lemma5} implies $|\ns X - 6| > k_0$. 
Combining the inequalities give
$|\ns X - 6| >  6 - \ns X$, which implies $\ns X > 6$. Hence $\ns Y < 3$.
Analogously, we obtain  $\ns Y + k_0 \geq 5$ and $|\ns Y - 3| > k_0$, whose combination
gives $3 - \ns Y  > 5 - \ns Y$, which is a contradiction.
\end{proof}

\begin{lemma}\label{l56b}
Let $G$ be a connected triangle-free plane graph with outer face bounded by a chordless $9$-cycle $C$.
Moreover, let $G$ contain one $5$-face $f_5$ and one $6$-face $f_6$ and all other internal faces are $4$-faces.
If  $G$ is $C$-critical, 
then  $G$ is described by Theorem~\ref{thm:9cycleprecise}(c),(d), and is depicted in Figure~\ref{fig-all-critical-graphs}(c1),(c2),(d1), and (d2).
\end{lemma}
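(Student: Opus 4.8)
The plan is to combine the structural restrictions coming from Lemma~\ref{lemma5} (via the flow/layout correspondence) with the previously established facts $\ell(\{5\})=5$, $\ell(\{6\})=6$, $\ell(\{5,6\})=9$ and Lemma~\ref{lem:inside} to pin down the relative positions of $f_5$, $f_6$ and the outer cycle $C$, and then feed the resulting small system of linear constraints on the numbers of source/sink edges along boundary paths into the computer enumeration announced in the introduction. By Lemma~\ref{l56a}, to certify that a candidate graph is $C$-critical it suffices to exhibit a single non-extending coloring, so the bulk of the work is to show that \emph{every} $C$-critical $G$ of this face type is one of the graphs in Figure~\ref{fig-all-critical-graphs}(c1),(c2),(d1),(d2); the reverse direction (those graphs really are critical) is then just exhibiting the colorings already drawn in the figure.

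First I would record the preliminary reductions: since $C$ is chordless and $G$ is $C$-critical, Lemma~\ref{lem:inside} forces every non-facial $8^-$-cycle to bound a $K$-critical subgraph, and in particular (using $\ell(\{5\})=5$, $\ell(\{6\})=6$) neither $f_5$ nor $f_6$ can lie strictly inside a short separating cycle; also there are no separating $4$-cycles, so we may apply Lemma~\ref{lem:common} later when two cuts of kind $(11)$ arise. Next, fix a non-extending coloring $\psi$; rotating colors we may assume $\nss\in\{9,6,3,0\}$, and by symmetry (reversing the roles of source and sink) it is enough to treat $\nss\ge \ntt$, i.e. $\nss\in\{9,6\}$. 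The case $\nss=9$ is exactly Lemma~\ref{clno9}, which already shows $\psi$ extends — contradiction — so in fact $\nss=6$, $\ntt=3$, and the only $\psi$-balanced layout is $q(f_5)=-3$, $q(f_6)=-6$ (or signs flipped). Now apply Lemma~\ref{lemma5} to this $q$ to get $K_0$.

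The heart of the argument is the case analysis on $K_0$. If $K_0$ is a cycle (option (ii)), then $|m|>|K_0|$ with $m\in\{0,-3,-6,-9\}$ the sum of $q$-values inside; combined with $\ell(\{5\})=5$, $\ell(\{6\})=6$, $\ell(\{5,6\})=9$ applied to the $K_0$-critical interior (Lemma~\ref{lem:inside}), each subcase is contradictory, exactly as in Lemma~\ref{clno9}; so $K_0$ is a path with both ends on $C$. That path splits $C$ into arcs $X,Y$ with $\ns X+\ns Y=6$, $\nt X+\nt Y=3$, and splits $G$ into $R_X,R_Y$. Using $\ell(\{5,6\})=9$ one rules out both $f_5,f_6$ on the same side, so say $f_6\in R_X$, $f_5\in R_Y$; then Lemma~\ref{lem:inside} gives $|K_0|\ge 6-(\ns X+\nt X)$ and $|K_0|\ge 5-(\ns Y+\nt Y)$ while Lemma~\ref{lemma5} gives $|\,\ns X+m_X-\nt X\,|>|K_0|$ for the appropriate partial sum $m_X$ of $q$-values, and similarly on $Y$. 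These are finitely many integer inequalities in the six quantities $\ns X,\nt X,\ns Y,\nt Y,|K_0|,$ and the split of $q$; I would enumerate their solutions by computer (as the paper does elsewhere) to obtain a short list of admissible ``cut patterns,'' and for each such pattern reconstruct $G$ face by face — the $4$-faces inside each region are forced to tile a disk bounded by a cut and a boundary arc, so the graph is determined up to the identifications indicated in the figure, and one checks the two possible cross-configurations of the two induced cuts give precisely (d1),(d2) while the non-crossing ones give (c1),(c2).

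The main obstacle I anticipate is the bookkeeping in this last reconstruction step: translating each admissible cut pattern into an actual plane graph requires carefully controlling how the $4$-faces between $K_0$ and $C$ fit together, ensuring no unwanted short separating cycle is created, and dealing with the kind-$(11)$ situation where the two natural cuts $K_0$ for the two colorings (source-heavy and sink-heavy) overlap — here Lemma~\ref{lem:common} is exactly what is needed to normalize the picture to a single shared vertex, after which the region-by-region tiling argument goes through. Everything else — the inequality juggling and the enumeration — is routine once set up, and the converse direction is immediate from Lemma~\ref{l56a} together with the explicit colorings in Figure~\ref{fig-all-critical-graphs}.
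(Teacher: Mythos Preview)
Your outline has the right overall shape (reduce to $\nss=6$ via Lemma~\ref{clno9}, apply Lemma~\ref{lemma5} to get a separating path, then enumerate integer solutions), but there is a genuine gap at the hinge point: your identification of the $\psi$-balanced layouts is wrong, and as a consequence you are working with only one cut where two are needed.

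With $\nss=6$ and $\ntt=3$ the balance condition $\nss+m=\ntt$ forces $q(f_5)+q(f_6)=-3$. Since $q(f_5)\in\{-3,3\}$ and $q(f_6)\in\{-6,0,6\}$, the $\psi$-balanced layouts are exactly
\[
q_1:\ q_1(f_5)=-3,\ q_1(f_6)=0\qquad\text{and}\qquad q_2:\ q_2(f_5)=3,\ q_2(f_6)=-6,
\]
\emph{not} $q(f_5)=-3$, $q(f_6)=-6$ (that layout has $m=-9$ and is balanced only for the $\nss=9$ coloring you already excluded). So there are \emph{two} layouts for the same $\psi$, and the paper applies Lemma~\ref{lemma5} to each one, obtaining two paths $K$ and $L$. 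The core of the argument is then a case split on the ``kind'' $(00),(22),(11),(20),(02)$ describing how $K$ and $L$ sit relative to each other on $C$; each kind yields its own small integer program, and it is precisely this interaction between the two cuts that isolates (c1), (c2), (d1), (d2) (the non-crossing kinds give the (c) pictures, kind $(11)$ gives the (d) pictures via Lemma~\ref{lem:common}, and $(20),(02)$ are infeasible).

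Your proposal instead extracts a single cut $K_0$ and hopes the two ``source-heavy'' and ``sink-heavy'' colorings supply the second cut. But those two colorings are symmetric to one another (reversing all orientations), so they give no independent information; a single path separating $f_5$ from $f_6$ together with the inequalities you wrote down does not constrain the graph enough to distinguish the four outcomes. Once you correct the layouts and run Lemma~\ref{lemma5} twice on the \emph{same} $\psi$, the rest of your plan (Lemma~\ref{lem:inside} bounds, computer enumeration, Lemma~\ref{lem:common} for kind $(11)$, and Lemma~\ref{l56a} for the converse) matches the paper's approach.
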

\begin{proof}
Since $G$ is $C$-critical, from Lemma~\ref{lem:inside} follows that  every non-facial $8^-$-cycle $K$ in $G$ bounds a $K$-critical subgraph. 
Since $G$ is $C$-critical, there exists a 3-coloring $\psi$ of $C$ that does not extend to a proper 3-coloring of $G$.

By symmetry, we assume that $C$ has more source edges than sink edges. Hence $C$ has either 9 or 6 source
edges. Lemma~\ref{clno9} eliminates the case of 9 source edges hence $C$ has 6 source edges.
Let $q$ be a $\psi$-balanced layout of $G$. 
Let $K_{0} \subset G$ be obtained by Lemma~\ref{lemma5} and let $k_0 = \vert K_0\vert$.

First suppose that $K_0$ is a cycle. Let $m$ denote the sum of the $q$-values of the faces in the interior of $K_0$.
By Lemma~\ref{lemma5}, $\vert m \vert > k_0$.
If both $f_5,f_6$ are in the interior of $K_0$, then $\vert m \vert = \vert q(f_5)+q(f_6) \vert = 6$, contradicting the fact that $\vert m \vert > k_0$ since $k_0\geq \ell(\{5,6\})=9$. 
If $f_5$ is in the interior of $K_0$, but  $f_6$ is not, then $\vert m \vert = 3$, while $\ell(\{5\})=5$, a contradiction again.  
Similarly, we obtain a contradiction when $f_6$ is in the interior of $K_0$ but $f_5$ is not, since $\ell(\{6\})=6$ and $\vert m \vert \leq 6$. 

Therefore $K_0$ is always a path joining two distinct vertices of $C$.
These endpoints of $K_0$ partition the edges of $C$ into two paths $X$ and $Y$ intersecting at the endpoints of $K_0$.
For $Z \in \{X,Y\}$, recall that $\ns Z$ and $\nt Z$ denotes the number of source edges and sink edges, respectively, among the edges of $Z$ in coloring $\psi$. 
The described structure is shown in Figure~\ref{fig-65}.
Let $R_X$ and $R_Y$ be the subgraph of $G$ induced by vertices in the closed interior of the cycle formed by $K_0,X$ and $K_0,Y$ respectively.

\begin{figure}
\begin{center}
 \includegraphics[scale=1]{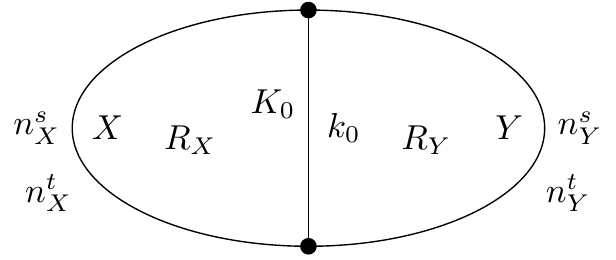}
\end{center}
\caption{The structure of a cut in  $G$. }\label{fig-65}
\end{figure}

\begin{claimn} \label{cl56}
If $q$ is a $\psi$-balanced layout either with $q(f_5)=-3$ and $q(f_6)=0$ or with $q(f_5)=3$ and $q(f_6)=-6$,
then both $R_X$ and $R_Y$ contain exactly one of $f_5$ and $f_6$.
\end{claimn}

\begin{proof}
By Lemma~\ref{clno9}, $C$ contains 6 source edges, hence $\ns X+\ns Y=6$ and $\nt X+\nt Y=3$. 
By symmetry, suppose for a contradiction that both $f_5$, $f_6$ belong to $R_X$.
Notice that $q(f_5)+q(f_6)=-3$ in both layouts.
By Lemma~\ref{lem:inside}, $\ns X+\nt X+k_0\geq \ell(\{5,6\})=9$, and by Lemma~\ref{lemma5}, $|\ns X -3 - \nt X| > k_0$. 

If $\ns X -3 - \nt X > k_0$, then we obtain $\ns X -3 - \nt X > k_0 \geq 9 - \ns X-\nt X$. This gives $\ns X > 6$, which is a contradiction.

If $-\ns X +3 +\nt X > k_0$, then we obtain $-\ns X +3 + \nt X > k_0 \geq 9 - \ns X-\nt X$. This gives $\nt X > 3$, which is a contradiction.
\end{proof}

Since $C$ has 6 source edges, we have two different $\psi$-balanced layouts.
Let $q_1$ and $q_2$ be the layout where $q_1(f_5)=-3$, $q_1(f_6)=0$, and $q_2(f_5)=3$, $q_2(f_6)=-6$, respectively.
Let $K$ and $L$ be the subgraph of $G$ obtained by Lemma~\ref{lemma5} applied to $q_1$ and $q_2$, respectively, and let $k=|K|$  and $l=|L|$. 
Note that we already showed that each of $K$ and $L$ is a path joining pairs of distinct vertices of $C$; let $K$ and $L$ be a $(v_1, v_2; f_5)$-cut and $(w_1, w_2; f_6)$-cut, respectively. 
The paths $K$ and $L$ form a structure of kind (00), (11), (22), (20), or (02), see Figure~\ref{fig-65nocross}, Figure~\ref{fig-65cross}, and Figure~\ref{fig-650220} for illustration. We discuss these cases in separate claims.

\begin{figure}
\begin{center}
 \includegraphics[scale=1]{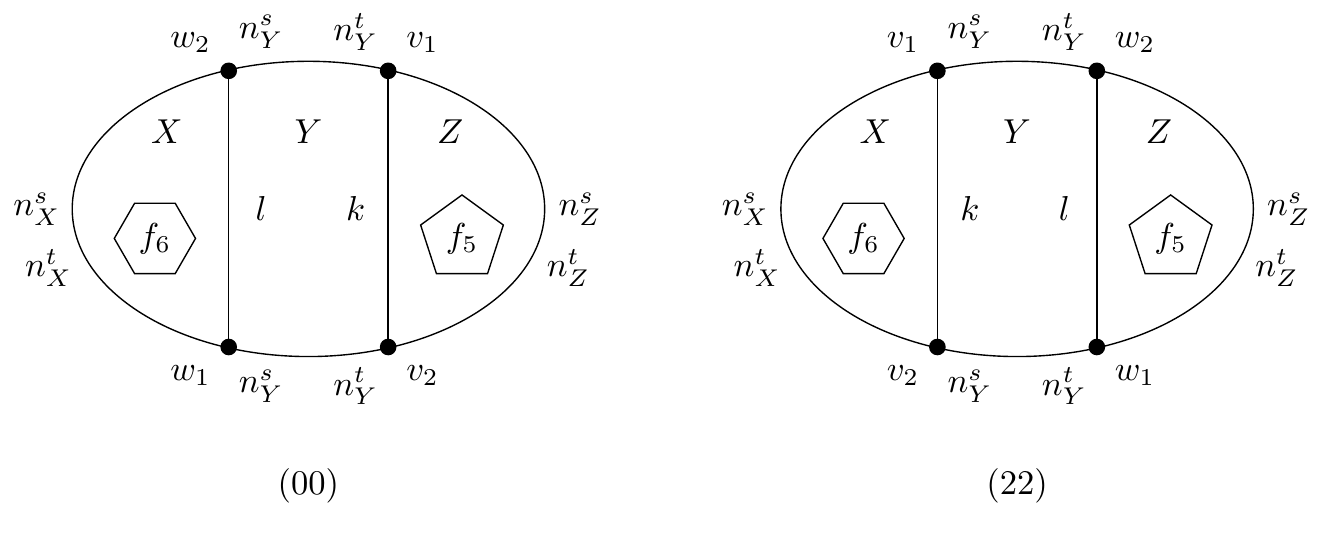}\\
\end{center}
\caption{The cases where $K$ and $L$ are of kinds (00) and (22).}
\label{fig-65nocross}
\end{figure}

\begin{claimn}
 If $K$ and $L$ are of kind (00), then $G$ is depicted in Figure~\ref{fig-all-critical-graphs}(c1).
\end{claimn}
\begin{proof}
Note that $K$, $L$ are not necessarily disjoint.
By symmetry, let $X$ be $C(w_1, w_2; v_1, v_2)$ such that the disk bounded by $L$ and $X$ contains $f_6$.
Similarly, let $Z$ be $C(v_1, v_2; w_1, w_2)$ such that the disk bounded by $K$ and $Z$ contains $f_5$. 
Denote by $Y$ the edges of $C$ that are neither in $X$ nor in $Z$.
See Figure~\ref{fig-65nocross}~(00).

By the assumption that $C$ has no chord, $k \geq 2$ and $l \geq 2$. 
By Claim~\ref{clno9}, we know $\ns X+\ns Y+\ns Z=6$ and $\nt X+\nt Y+\nt Z=3$.

Lemma~\ref{lem:inside} implies $l+\Sum X\geq \ell(\{6\})=6$. 
Moreover, by parity, $l+\Sum X$ must be even.
Similarly, Lemma~\ref{lem:inside} implies that $k+\Sum Z \geq \ell(\{5\}) = 5$ and it is odd.
Lemma~\ref{lemma5} applied to $q_1$ and $q_2$ implies
$|\ns X +\ns Y-\nt X-\nt Y| > k$ and
$|3+\ns Z+\ns Y   -\nt Z-\nt Y| > l$, respectively.

Here is the summary of the constraints:
\begin{align*}
|\ns X +\ns Y-\nt X-\nt Y| &> k \\ 
|3+\ns Z + \ns Y  -\nt Z-\nt Y| &> l  \\
 l+\Sum X&\geq  6   \text{ and even} \\
  k+\Sum Z& \geq  5  \text{ and odd} \\
  \ns X+\ns Y+\ns Z&=6  \\
  \nt X+\nt Y+\nt Z&=3   \\
 \min\{k,l\} &\geq 2
\end{align*}

All integer solutions to these constraints are in the following table:{
\begin{center}
\setlength{\tabcolsep}{8pt}
\renewcommand{\arraystretch}{1.2}
\begin{tabular}{|c|c|c|c|c|c|c|c|}
\hline
$\ns X$ & $\nt X$ & $\ns Y$ & $\nt Y$ & $\ns Z$ & $\nt Z$ & $k$ & $l$ \\
\hline
0&1&5&0&1&2&2&5  \\ \hline
0&1&6&0&0&2&3&5  \\ \hline
1&1&4&0&1&2&2&4  \\ \hline
1&1&5&0&0&2&3&4  \\ \hline
2&1&3&0&1&2&2&3  \\ \hline
2&1&4&0&0&2&3&3  \\ \hline
3&1&2&0&1&2&2&2  \\ \hline
3&1&3&0&0&2&3&2  \\ \hline
\end{tabular}
\end{center}
}

From these eight solutions we obtain the graph depicted in Figure~\ref{fig-all-critical-graphs}(c1),
up to identification of vertices.
\end{proof}

\begin{claimn}
 If $K$ and $L$ are of kind (22), then $G$ is depicted in Figure~\ref{fig-all-critical-graphs}(c2).
\end{claimn}
\begin{proof}
By symmetry, let $X$ be $C(v_1, v_2; w_1, w_2)$ such that the disk bounded by $K$ and $X$ contains $f_6$.
Similarly, let $Z$ be $C(w_1, w_2; v_1, v_2)$ such that the disk bounded by $L$ and $Z$ contains $f_5$. 
Denote by $Y$ the edges of $C$ that are in neither $X$ nor $Z$.
See Figure~\ref{fig-65nocross}~(22).

By the assumption that $C$ has no chord, $k \geq 2$ and $l \geq 2$. 
By Claim~\ref{clno9}, we know $\ns X+\ns Y+\ns Z=6$ and $\nt X+\nt Y+\nt Z=3$.

Lemma~\ref{lem:inside} implies $k+\Sum X\geq \ell(\{6\})=6$. 
Moreover, by parity, $k+\Sum X$ must be even.
Similarly, Lemma~\ref{lem:inside} implies $l+\Sum Z \geq \ell(\{5\}) = 5$ and it is odd.
Lemma~\ref{lemma5} applied to $q_1$ and $q_2$ implies
$|\ns X -\nt X| > k$ and
$|\ns Z + 3  -\nt Z| > l$, respectively.

Here are the constraints:
\begin{align*}
|\ns X -\nt X| &> k \\ 
|\ns Z + 3  -\nt Z| &> l  \\
 k+\Sum X &\geq6   \text{ and even} \\
  l+\Sum Z  &\geq5  \text{ and odd} \\
  \ns X+\ns Y+\ns Z&=6  \\
  \nt X+\nt Y+\nt Z&=3   \\
 \min\{k,l\} &\geq 2
\end{align*}
All integer solutions to these constraints are in the following table:

\vspace{3mm}

{
\begin{center}
\setlength{\tabcolsep}{8pt}
\renewcommand{\arraystretch}{1.2}
\begin{tabular}{|c|c|c|c|c|c|c|c|}
\hline
$\ns X$ & $\nt X$ & $\ns Y$ & $\nt Y$ & $\ns Z$ & $\nt Z$ & $k$ & $l$ \\
\hline
4&0&0&2&2&1&2&2 \\
\hline
4&0&0&3&2&0&2&3 \\
\hline
\end{tabular}
\end{center}
}

From these two solutions we obtain the graph depicted in Figure~\ref{fig-all-critical-graphs}(c2).
\end{proof}

\vspace{3mm}

\begin{claimn}
 If $K$ and $L$ are of kind (11), then $G$ is depicted in Figure~\ref{fig-all-critical-graphs}(d1) or (d2).
\end{claimn}
\begin{proof}
Assume that $K$ and $L$ are of kind (11), so that the clockwise order of their endpoints on $C$ is $v_1,w_2,v_2,w_1$. 
Let $v_1,w_2,v_2,w_1$ partition $C$ into four paths $X,Y,Z,W$ in the clockwise order such
that $X$ is an $w_1, v_1$-path. 
Moreover, the disk bounded by $X,Y,L$ contains $f_6$ and the disk bounded by $K,Y,Z$ contains $f_5$.
See Figure~\ref{fig-65cross} for an illustration. 

First we show that $|K| \leq 6$ and $|L| \leq 7$.
We obtain the following set of constraints by applying  Lemma~\ref{lemma5} and Lemma~\ref{lem:inside}.
\begin{align*}
| \ns X + \ns W - \nt X - \nt W | > |K|  \\ 
| \ns Z + \ns W +3 - \nt Z - \nt W | > |L|   \\ 
\Sum X + \Sum Y + |L| &\geq \ell(\{6\})=6 \text{ and even}  
\end{align*}
In all solutions, t $|K| \leq 6$ and $|L| \leq 7$. Hence Lemma~\ref{lem:common} applies
and $K$ and $L$ have a common point $v$.

Partition $L$ into paths $L_1$ and $L_2$ such that 
$L_1$ and $L_2$ is a $v, w_2$-path and a $v, w_1$-path, respectively. 
Do a similar partition of $K$ into $K_1$ and $K_2$.
Since $v$ is a common point, $f_6$ and $f_5$ is contained in interior faces of subgraphs of $G$ induced by $X,K_1,L_2$ and $Z,L_1,K_2$, respectively.
Let $k_i = |K_i|$ and $l_i = |L_i|$ for $i \in \{1,2\}$.

\begin{figure}
\begin{center}
 \includegraphics{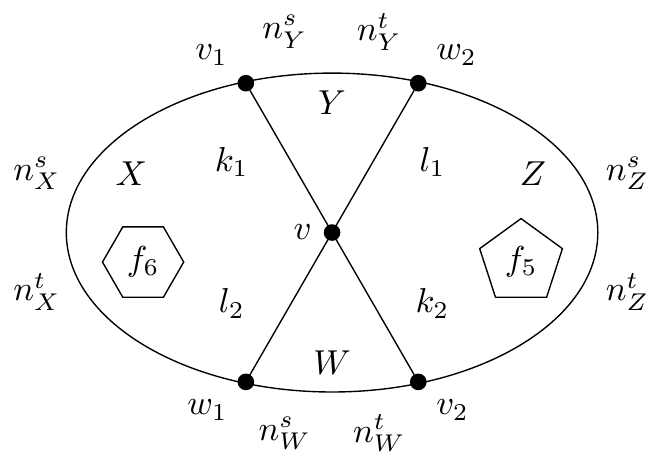}\\
\end{center}
\caption{Case where $K$ and $L$ are of kind (11).}
\label{fig-65cross}
\end{figure}

Note that $\min\{k_1,k_2,l_1,l_2\} \geq 1$ since $v$ is an internal vertex.

We obtain the following set of constraints by applying  Lemma~\ref{lemma5} and Lemma~\ref{lem:inside}.
\begin{align}
| \ns X + \ns W - \nt X - \nt W | > k_1+k_2   \label{eq-65-k1}\\ 
| \ns Z + \ns W +3 - \nt Z - \nt W | > l_1 + l_2  \label{eq-65-k2}\\
k_1+l_2+\Sum X &\geq \ell(\{6\})=6 \text{ and even} \label{eq-65-sz1}\\
l_1+k_2+\Sum Z &\geq \ell(\{5\})=5 \text{ and odd}\\
l_2+k_2+\Sum X+\Sum Y+\Sum Z &\geq \ell(\{5,6\})=9 \text{ and odd}\label{eq-65-sz3}\\
\ns X+\ns Y+\ns Z + \ns W&=6\\
\nt X+\nt Y+\nt Z + \nt W&=3
\end{align}
Inequalities (\ref{eq-65-k1}) and (\ref{eq-65-k2}) come from Lemma~\ref{lemma5}.
Inequalities (\ref{eq-65-sz1})--(\ref{eq-65-sz3}) come from the fact that interiors of cycles
are also critical graphs. 

This system of equations has 68 solutions. In all of them, $\ns X + \nt X + k_1 + l_2  =6$ and $\ns Z + \nt Z + k_2+l_1 = 5$.
Hence the region bounded by $X,K_1,L_2$ is a 6-face and the region bounded by $Z,L_1,K_2$ is a 5-face. In order to generate only general solutions, where faces share as little with $C$ as possible, we add constraints $\ns X + \nt X  = 0$ and $\ns Z + \nt Z = 0$. Then the system has only two solutions. 

{
\begin{center}
\setlength{\tabcolsep}{8pt}
\renewcommand{\arraystretch}{1.2}
\begin{tabular}{|c|c|c|c|c|c|c|c|c|c|c|c|}
\hline
$\ns X$ & $\nt X$ & $\ns Y$ & $\nt Y$ & $\ns Z$ & $\nt Z$ & $\ns W$ & $\nt W$ & $k_1$ & $k_2$ & $l_1$ & $l_2$ \\
\hline
0&0&0&3&0&0&6&0 &1&3&2&5  \\
\hline
0&0&0&3&0&0&6&0 &2&2&3&4  \\
\hline
\end{tabular}
\end{center}
}

From these solutions we obtain graphs depicted in Figure~\ref{fig-all-critical-graphs}(d1) and (d2). We also checked that the 68 solutions
can indeed be obtained from these two by identifying some vertices.
The solutions were obtained by a computer program that is available on arXiv and at \ourURL.
\end{proof}

\begin{figure}
\begin{center}
 \includegraphics[scale=1]{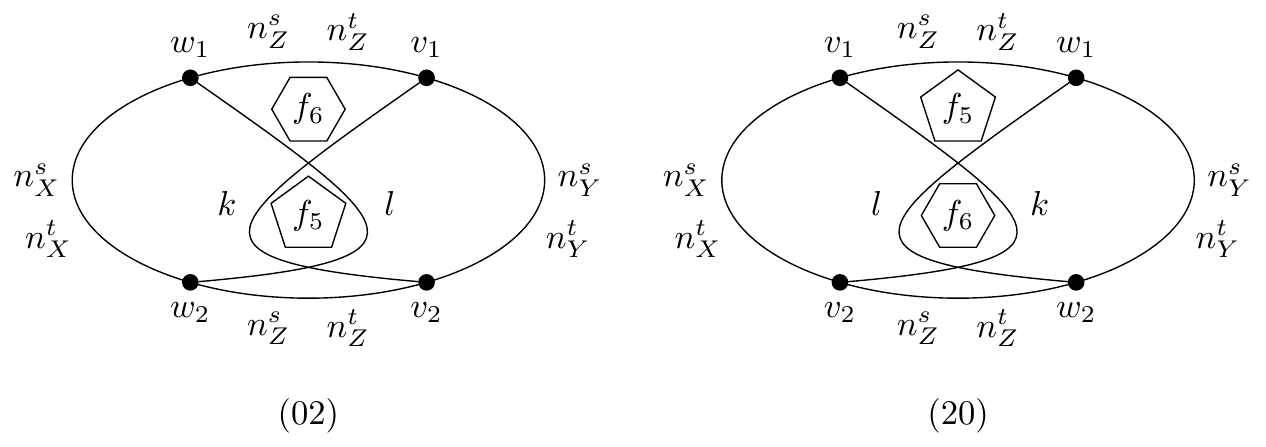}\\
\end{center}
\caption{Case where $K$ and $L$ are of kinds (02) and (20).}
\label{fig-650220}
\end{figure}

\begin{claimn}
 The case where $K$ and $L$ are of kind (02) does not occur.
\end{claimn}
\begin{proof}
Assume that $K$ and $L$ are of  kind (02), so that the clockwise order of their endpoints on $C$ is $v_1,v_2,w_2,w_1$. 
Let $Y$ and $X$ be the clockwise $v_1, v_2$-subpath and $w_2,w_1$-subpath, respectively, of $C$.
Let $Z$ be the edges of $C$ that are in neither $X$ nor $Y$.
See Figure~\ref{fig-650220}~(02) for an illustration. 

Observe that (by the structure of $K$ and $L$) the subgraph of $G$ formed by $Z$, $K$, and $L$
contains in the internal faces both $f_5$ and $f_6$ and at least one additional 4-face.
Hence $k+l+|Z| \geq 15$.
We obtain the following set of constraints by applying  Lemma~\ref{lemma5} and Lemma~\ref{lem:inside}.
\begin{align*}
| \ns X - \nt X | > k  \\ 
| \ns Y -6  -\nt Y  | > l \\
k +l +\Sum Z &\geq 15 \\
\ns X+\ns Y+\ns Z&=6\\
\nt X+\nt Y+\nt Z&=3
\end{align*}
This set of equations has no solution.
\end{proof}

\begin{claimn}
 The case where $K$ and $L$ are of kind (20) does not occur.
\end{claimn}
\begin{proof}
Assume that $K$ and $L$ are of  kind (20), so that the clockwise order of their endpoints on $C$ is $w_1,w_2,v_2,v_1$. 
Let $Y$ and $X$ be the clockwise $w_1, w_2$-subpath and $v_2,v_1$-subpath, respectively, of $C$.
Let $Z$ be the edges of $C$ that are in neither $X$ nor $Y$.
See Figure~\ref{fig-650220}~(20) for an illustration. 

Observe that (by the structure of $K$ and $L$) the subgraph of $G$ formed by $Z$, $K$, and $L$
contains in the internal faces both $f_5$ and $f_6$ and at least one additional 4-face.
Hence $k+l+|Z| \geq 15$.
We obtain the following set of constraints by applying  Lemma~\ref{lemma5} and Lemma~\ref{lem:inside}.
\begin{align*}
| \ns Y - 3 - \nt Y | > k  \\ 
| \ns X +3 -\nt X  | > l \\
k +l +\Sum Z &\geq 15 \\
\ns X+\ns Y+\ns Z&=6\\
\nt X+\nt Y+\nt Z&=3
\end{align*}
This set of equations has no solution.
\end{proof}

This finishes the proof of Lemma~\ref{l56b}.
\end{proof}


\subsection{One 5-face}

\begin{lemma}\label{l5a}
Let $G$ be a connected triangle-free plane graph with outer face  bounded by a chordless $9$-cycle $C$.
Moreover, let $G$ contain one 5-face $f_5$ that shares a path of length at least two with $C$, all other internal faces of $G$ are $4$-faces, 
and all non-facial $8^-$-cycles $K$ in $G$ bound a $K$-critical graph.
Then $G$ is $C$-critical.
\end{lemma}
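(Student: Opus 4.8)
The plan is to show directly that $G$ satisfies the definition of $C$-criticality, namely that there is a $3$-coloring of $C$ that does not extend to $G$, and that for every edge $e \in E(G) \setminus E(C)$ the coloring does extend to $G - e$. Since $G$ has exactly one $5$-face sharing a path of length at least two with $C$ and all other internal faces are $4$-faces, this puts us exactly in the setting of Theorem~\ref{thm:n-2} with $k = 9$ minus the requirement that the special face have length $k - 2 = 7$ — so Theorem~\ref{thm:n-2} does not apply verbatim. Instead I would mirror the structure of Lemma~\ref{l56a} and Lemma~\ref{l56b}: the ``non-extendability'' direction is witnessed by an explicit coloring (depicted in Figure~\ref{fig-all-critical-graphs}(a)), and the real content is the ``every $G - e$ extends'' direction.

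First I would exhibit a $3$-coloring $\psi$ of $C$ that does not extend. Label $C = c_1 \cdots c_9$ so that $f_5 \cap C$ is the path $c_1 c_2 c_3$ (length two); choose $\psi$ so that the number of source edges and sink edges on $C$, restricted to $E(C) \setminus E(f_5)$, is such that no $\psi$-balanced layout can be realized — concretely, arrange $\psi$ so that $E(C) \setminus E(f_5)$ contains only source edges (or only sink edges), which by the flow correspondence forces every $\psi$-balanced layout to push all its $q$-mass through $f_5$, and then a counting/parity argument via Lemma~\ref{lemma5} (as in the proof of Claim~\ref{clno9}) shows $\psi$ cannot extend. This is the analogue of the ``source/sink on the non-special part of $C$'' obstruction appearing in Theorem~\ref{thm:n-2}.

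Second, and this is the main step, I would show $\psi$ extends to $G - e$ for every $e \in E(G) \setminus E(C)$. Suppose not; then $G - e$ contains a $C$-critical subgraph $H$, and by Lemma~\ref{lem:inside} applied inside $G$ (using that all non-facial $8^-$-cycles of $G$ bound $K$-critical subgraphs, so no face of $H$ can ``absorb'' faces of $G$ incorrectly) every $5$-face of $H$ is a $5$-face of $G$, every $6$- or $7$-face of $H$ would have to contain a $5$-face of $G$ in its interior, etc. Since $G$ has only one $5$-face and no other $5^+$-face, the multiset $S(H)$ must be $\{5\}$, forcing $H = G$ — but $e \notin E(H) = E(G)$, a contradiction. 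This is exactly the bookkeeping argument used in Lemma~\ref{l56a}; I expect the subtlety to be verifying that $f_5$ being shared with $C$ along a path of length $\geq 2$ is compatible with $H$ being $C$-critical with $S(H) = \{5\}$, i.e. that the path-length-$\geq 2$ hypothesis is exactly what Theorem~\ref{thm:9cycleprecise}(a) requires and is not violated when passing to $H = G$.

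The hardest part will be the non-extendability argument: unlike the $\{5,6\}$ and $\{5,5,5\}$ cases, here there is only a single $5$-face, so the layout has very little freedom, and I must check carefully that for the chosen $\psi$ \emph{no} choice of $K_0$ in Lemma~\ref{lemma5} can be ruled out — meaning I need $\psi$ for which the lemma's conclusion genuinely holds (some path or cycle $K_0$ with the stated inequality exists), which is what certifies non-extendability. Concretely I would take $\psi$ making all of $E(C) \setminus E(f_5)$ source edges, note the unique $\psi$-balanced layout has $q(f_5) = -3$ (or $+3$ after a symmetry), and then check that $K_0 = C(c_3, c_1; \cdot)$ together with the interior path bounding $f_5$ violates the inequality $|n^s + m - n^t| > |K_0|$, giving the required obstruction; the remaining verification that this $\psi$ is consistent (i.e.\ actually a proper coloring of $C$ with the claimed source/sink pattern, which needs $9 - 2 = 7 \equiv 1 \pmod 3$ edges outside $f_5$ to be colorable as all-source) is a short parity check. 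Once both directions are in hand, $G$ is $C$-critical by definition.
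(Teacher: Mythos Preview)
Your bookkeeping argument for the ``every $G-e$ extends'' direction does not go through. You claim that any $C$-critical subgraph $H \subseteq G-e$ must satisfy $S(H)=\{5\}$ and hence $H=G$, but this is false. The possible multisets for $H$ are $\{5\},\{7\},\{5,6\},\{5,5,5\}$, and while $\{5,5,5\}$ is indeed ruled out (only one $5$-face in $G$) and $\{5\}$ does force $H=G$, the options $\{7\}$ and $\{5,6\}$ remain perfectly viable: a $7$-face of $H$ can enclose $f_5$ together with a single $4$-face of $G$ (this is exactly what $\mathcal S_7=\{\{5\}\}$ allows), and a $6$-face of $H$ can enclose two $4$-faces of $G$ (allowed by $\mathcal S_6=\{\emptyset\}$). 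In Lemma~\ref{l56a} the analogous argument works only because $G$ there has \emph{two} $5^+$-faces, and the second one ($f_6$) has nowhere to hide in $H$; with only one $5^+$-face that constraint disappears.

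This breaks your single-$\psi$ strategy: if $S(H)=\{7\}$, then by Theorem~\ref{thm:n-2} a coloring with $9$ source edges never extends to $H$, so you would need a $(6,3)$ coloring whose sink edges are placed so that $E(C)\setminus E(f_7)$ contains both a source and a sink---but the location of $f_7$ depends on $e$. The paper's proof therefore abandons the single-$\psi$ approach and chooses $\psi$ depending on which case $H$ falls into: the $9$-source coloring for $S(H)=\{5,6\}$ (extendability to $H$ comes from Lemma~\ref{clno9}), and a tailored $(6,3)$ coloring with two sinks on $E(f_5)\cap E(C)$ and the third sink placed in $E(C)\setminus E(f_7)$ for $S(H)=\{7\}$. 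Your non-extendability discussion is also garbled: a coloring with $7$ sources and $2$ sinks is not a valid $3$-coloring of a $9$-cycle (you need $n^s-n^t\equiv 0\pmod 3$), and Lemma~\ref{lemma5} runs in the direction ``$\psi$ does not extend $\Rightarrow$ $K_0$ exists'', not the reverse.
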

\begin{proof}
Let $e \in E(G) \setminus E(C)$.  We want to find a $3$-coloring $\psi$ of $C$ that does not extend to a proper $3$-coloring of $G$ but does extends to a proper $3$-coloring of $G-e$. 
Note that if $e \not\in E(f_5)$, then  $G-e$ has a $5$-face and a $6$-face, and if $e \in E(f_5)$, then $G-e$ has a $7$-face. 

If every coloring of $C$ extends to $G-e$, then we can let $\psi$ be a coloring with 9 source edges since $\psi$ does not extend to $G$ as there is no $\psi$-balanced layout for $G$.
If not all colorings of $C$ extend to $G-e$, then there is a $C$-critical subgraph $H$ of $G-e$
where the same set of precolorings of $C$ extends to $G-e$ as well as to $H$.
The property that every $8^-$-cycle $K$ either bounds a face or a $K$-critical subgraph gives
that $H$ contains either a $5$-face and a $6$-face or a $7$-face.

\begin{itemize}
\item[{\sl Case 1:}] $H$ contains a $5$-face and a $6$-face.\\ 
Let $\psi$ be a $3$-coloring of $C$ containing 9 source
edges; in other words, the colors of the vertices around $C$ are $1,2,3,1,2,3,1,2,3$.
Then $\psi$ extends to a $3$-coloring of $H$ by Claim~\ref{clno9}.
However, $\psi$ does not extend to a $3$-coloring of $G$ since it is not
possible to create a $\psi$-balanced layout for $G$.

\item[{\sl Case 2:}] $H$ contains a $7$-face $f_7$.\\
By Theorem~\ref{thm:n-2}, if $\psi$ is a $3$-coloring of $C$ containing 9 source edges,
then $\psi$ does not extend to a proper $3$-coloring of $H$, and if $\psi$ is a $3$-coloring of $C$ containing 6 source edges and 3 sink edges, then $\psi$ extends to a proper $3$-coloring of $H$
if $E(C)\setminus E(f_7)$ contains both  a sink edge and a source edge with respect to  $\psi$.
Now it remains to observe that there exists a coloring  $\psi$ of $C$ such that $E(f_5) \cap E(C)$ contains two sink edges and the third sink edge is in  $E(C)\setminus E(f_7)$. 
The other edges of $C$ are source edges. 
Such a coloring does not extend to $G$ but it does extend to $G-e$.
\end{itemize}
\end{proof}

\begin{lemma}\label{l5}
Let $G$ be a connected triangle-free plane graph with outer face  bounded by a chordless $9$-cycle.
Moreover, let $G$ contain one 5-face $f_5$ and all other internal faces of $G$ are $4$-faces.
If $G$ is $C$-critical, then $G$ is described by Theorem~\ref{thm:9cycleprecise}(a) and is depicted in Figure~\ref{fig-all-critical-graphs}(a).
\end{lemma}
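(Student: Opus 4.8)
The plan is to imitate the proof of Lemma~\ref{l56b}, which is simpler here because with a single non-$4$-face there is only one cut to analyse and no casework on kinds. Since $G$ is $C$-critical, Lemma~\ref{lem:inside} shows that every non-facial $8^-$-cycle $K$ of $G$ bounds a $K$-critical subgraph --- the general condition of Theorem~\ref{thm:9cycleprecise} --- so it remains to prove that $f_5\cap C$ is a path of length at least two and that $G$ is one of the graphs of Figure~\ref{fig-all-critical-graphs}(a). Pick a $3$-coloring $\psi$ of $C$ not extending to $G$; by symmetry $C$ has at least as many source as sink edges, so $\nss\in\{6,9\}$ since $\nss-\ntt\equiv 0\pmod 3$ and $\nss+\ntt=9$. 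First I would dispose of the case $\nss=9$: as $f_5\neq C$ there is $e\in E(f_5)\setminus E(C)$, and by $C$-criticality some $3$-coloring $\psi'$ extends to $G-e$ but not to $G$; now $G-e$ has one $7$-face $f_7$ and otherwise only $4$-faces, the coloring with $9$ source edges does not extend to $G-e$ (there is no balanced layout, as $|q(f_7)|\le 7$), hence $G-e$ has a $C$-critical subgraph $H$, and $S(H)=\{7\}$ because the $5^+$-faces of $H$ are unions of internal faces of $G-e$ (so none has length $5$) and $S(H)\in\Ss_9=\{\{7\},\{5\},\{6,5\},\{5,5,5\}\}$. By Theorem~\ref{thm:n-2} the coloring $\psi'$, which extends to $H$, has a sink edge in $E(C)\setminus E(f_7)$, so $\psi'$ has fewer than $9$ source edges; replacing $\psi$ by $\psi'$, I may assume $C$ has exactly $6$ source and $3$ sink edges. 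Then $G$ has a unique $\psi$-balanced layout $q$, with $q(f_5)=-3$.

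Next I apply Lemma~\ref{lemma5} to $q$, obtaining $K_0\subseteq G$. If $K_0$ were a cycle, the sum $m$ of $q$-values it encloses would be $0$ or $-3$ depending on whether $f_5$ lies inside, so $|m|\le 3<4\le|K_0|$ as $G$ is triangle-free, contradicting Lemma~\ref{lemma5}. Hence $K_0$ is a path with distinct ends, which split $C$ into subpaths $X$ and $Y$, chosen so that $f_5$ lies in the disk bounded by $X+K_0$. Writing $k=|K_0|$, I record: $k\ge 2$ (as $C$ is chordless); $|\ns X-\nt X-3|>k$ (Lemma~\ref{lemma5}; the inequality for the other subpath is the same here); $\ns X+\ns Y=6$ and $\nt X+\nt Y=3$; the cycle $X+K_0$ either bounds exactly $f_5$ or, being non-facial, bounds an $(X+K_0)$-critical graph with $S=\{5\}$ by Lemma~\ref{lem:inside}, so $|X+K_0|=\Sum X+k$ is odd and at least $5$; and likewise $|Y+K_0|=\Sum Y+k$ is even and at least $4$ (using Theorem~\ref{thm-gimbel} for the region containing only $4$-faces).

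Finally I would list all integer solutions of this small system with a computer program (available on arXiv and at \ourURL). In each solution the disk bounded by $X+K_0$ is either exactly $f_5$ --- so $f_5\cap C=X$ is a path of length $2$ or $3$, and $G$ is $f_5$ glued to a quadrangulation of the rest of the disk, which is the family depicted in Figure~\ref{fig-all-critical-graphs}(a) --- or $|X+K_0|=7$, and then the disk bounded by $X+K_0$ is one of the graphs of Theorem~\ref{thm:7cycleprecise}; in that case a short argument using that $G$ contains no separating $5^-$-cycle (part of the general condition above) pins down the position of this $7$-cycle relative to $f_5$ so that $f_5$ still meets $C$ in a path of length at least two, and $G$ is again as in Figure~\ref{fig-all-critical-graphs}(a). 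I expect the main obstacle to be exactly this reconstruction step: confirming that each of the finitely many solutions of the linear system yields a graph of Figure~\ref{fig-all-critical-graphs}(a) and that no ``misaligned'' $5$-face can occur in a $C$-critical graph, a verification that --- as in Lemma~\ref{l56b} --- is carried out with the aid of a computer.
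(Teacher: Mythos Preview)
Your proposal tracks the paper's proof closely through the reduction to a $6$-source coloring and the application of Lemma~\ref{lemma5}, and you correctly anticipate that the linear system splits into ``good'' solutions where $X+K_0$ bounds exactly $f_5$ and ``bad'' solutions where $|X+K_0|=7$. The gap is in your treatment of the bad solutions. You propose to invoke Theorem~\ref{thm:7cycleprecise} together with the absence of separating $5^-$-cycles to force $f_5$ to share two edges with $C$. This does not work: Theorem~\ref{thm:7cycleprecise} only guarantees that $f_5$ shares a path of length $\ge 2$ with the $7$-cycle $X+K_0$, and that path may lie entirely (or all but one edge) on $K_0$ rather than on $X\subseteq C$. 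For example, in the solution with $|K_0|=2$ one can have the Theorem~\ref{thm:7cycleprecise}(b) or (c) configuration with the $5$-face meeting the $7$-cycle exactly along $K_0$ and one adjacent edge, so $f_5\cap C$ is a path of length at most one. The ``no separating $5^-$-cycle'' condition does not exclude this.

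What actually happens (and what the paper does) is that these three solutions must be \emph{eliminated}, not reconciled with the conclusion: they do not correspond to $C$-critical graphs at all. The paper's argument picks an edge $e'$ interior to the $7$-cycle and observes that any $\varrho$ witnessing criticality for $e'$ must again match one of the six solution patterns; but in all three bad patterns the complementary arc (your $Y$, the paper's $X$) consists entirely of source edges and is strictly longer than $K_0$, so no coloring of $K_0$ can make the quadrangulated side $\varrho$-balanced, and $\varrho$ fails to extend to $G-e'$ --- a contradiction. Your plan needs this contradiction step rather than a structural placement argument.
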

\begin{proof}
Let $G$ be $C$-critical. 
By Lemma~\ref{lem:inside}, every $8^-$-cycle $K$ bounds a face or a $K$-critical subgraph in $G$. 
Let $e \in E(G) \setminus E(C)$ such that $G-e$ contains a 7-face $f_7$.
Let $\psi$ be a $3$-coloring of $C$ that extends to $G-e$ but does not extend to $G$.

By Theorem~\ref{thm:n-2}, if $\psi$ is a $3$-coloring of $C$ containing 9 source edges,
then $\psi$ does not extend to a proper $3$-coloring of $G-e$.
Hence $\psi$ is a $3$-coloring of $C$ containing 6 source edges and 3 sink edges.
Let $q$ be a $\psi$-balanced layout of $G$. The only possibility is $q(f_5)=-3$.

Since $\psi$ does not extend to $G$ and $q$ is  $\psi$-balanced layout of $G$, Lemma~\ref{lemma5} can be applied.
Notice that Lemma~\ref{lemma5} cannot give that $K_0$ is a cycle since $|m| \leq 3$ and
there is no cycle of length at most 2. Hence $K_0$ is a path, and let $k_0=|K_0|$. 

Let the endpoints of $K_0$ partition $C$ into two paths $X$ and $Y$ that are internally disjoint
and have the same endpoints as $K_0$.
Since $\psi$ has six source edges, we obtain $\ns X+\ns Y=6$ and $\nt X+\nt Y=3$. 
By symmetry assume that $f_5$ is in the region bounded by $Y$ and $K_0$.
Lemma~\ref{lemma5} implies $|\ns Y -3 - \nt Y| > k_0$.
Since $Y$ contains $f_{5}$, $k_{0}+\Sum Y\geq\ell(\{5\})=5$ and odd. 
Because $C$ has no chords, $k_0 \geq 2$.
We solve this system of constraints by a computer program. 
The solutions are in Table~\ref{tab:soll5}. 
Sketches of the solutions are in Figure~\ref{fig-5-alone}.

\begin{table}[ht]
\begin{center}
\setlength{\tabcolsep}{8pt}
\renewcommand{\arraystretch}{1.2}
\begin{tabular}{|c|c|c|c|c|c|}
\hline
\# &$\ns X$ & $\nt X$ & $\ns Y$ & $\nt Y$ & $k_{0}$\\
\hline
(a)& 6&1&0&2&3 \\
\hline
(b)& 6&0&0&3&2 \\
\hline
(c)&5&1&1&2&2 \\ 
\hline
(d)&6&0&0&3&4 \\
\hline
(e)&5&0&1&3&3 \\
\hline
(f)&4&0&2&3&2 \\
\hline
\end{tabular}
\end{center}
\caption{Solutions in Lemma~\ref{l5}}\label{tab:soll5}
\end{table}

\def\tempL{0.17}
\begin{figure}
\begin{center}
\includegraphics[width=\tempL\textwidth,page=1]{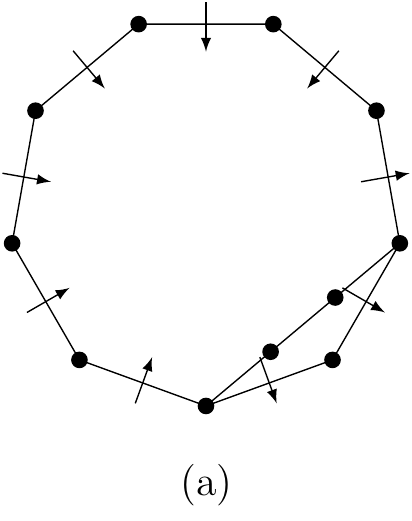}
\includegraphics[width=\tempL\textwidth,page=2]{fig-5-alone}
\includegraphics[width=\tempL\textwidth,page=3]{fig-5-alone}

\includegraphics[width=\tempL\textwidth,page=4]{fig-5-alone}
\includegraphics[width=\tempL\textwidth,page=5]{fig-5-alone}
\includegraphics[width=\tempL\textwidth,page=6]{fig-5-alone}
\end{center}
\caption{Solutions in Lemma~\ref{l5}.}\label{fig-5-alone}
\end{figure}
\def\tempL{0.19}

From the first three solutions we obtain that $Y$ is part of a $5$-face $f_{5}$ sharing at least two sink edges with $C$. This is the desired conclusion.

The other three solutions give that $Y,K_0$ form a  $7$-cycle sharing at least three sink edges with $C$.
We need to rule out this case.  
Since the cycle formed by $Y,K_0$ does not bound  a face in $G$, it must contain an edge $e'$ in its interior.
Since $G$ is $C$-critical, there exists a proper $3$-coloring $\varrho$ of $C$ that does not extend to $G$
but does extend to $G-e'$. 
Notice that the solutions (d), (e), and (f)  also describe all patterns of a $3$-coloring of
$C$ that do not extend to $G$, in particular for $\varrho$.
In all three cases, $X$ contains only source edges and $|X| > |K_0|$.
Since the cycle formed by $X,K_0$ contains only 4-faces in its interior, it is not possible to create a $\rho$-balanced layout in its interior.
Hence $\varrho$ does not extend to subgraph of $G-e'$ bounded $X,K_0$ . 
Therefore  $\varrho$ does not extend $G$. This contradicts the $C$-criticality of $G$.
Hence the cases  (d), (e), and (f) do not correspond to $C$-critical graphs.

This finishes the proof of Lemma~\ref{l5}.
\end{proof}


\subsection{Three 5-faces}

\begin{lemma}\label{lall5a}
Let $G$ be a connected triangle-free plane graph with outer face  bounded by a chordless $9$-cycle $C$.
Moreover, let $G$ contain three 5-faces, all other internal faces are $4$-faces, and
all non-facial $8^-$-cycles $K$ in $G$ bound a $K$-critical graph.
If there is a proper 3-coloring $\psi$ of $C$ that does not extend to  a 3-coloring of $G$, then $G$ is $C$-critical.
\end{lemma}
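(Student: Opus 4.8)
This is the three-$5$-face analogue of Lemmas~\ref{l56a} and~\ref{l5a}, and the plan is to run essentially the same argument. It suffices to prove that for every $e\in E(G)\setminus E(C)$ the given coloring $\psi$ extends to a proper $3$-coloring of $G-e$: indeed, if $H'$ is any proper subgraph of $G$ with $C\subseteq H'$, pick $e\in E(G)\setminus E(H')$, so $e\notin E(C)$; then $\psi$ extends to $G-e\supseteq H'$, hence to $H'$, while $\psi$ does not extend to $G$, so $H'$ witnesses $C$-criticality.

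So fix $e\in E(G)\setminus E(C)$ and suppose for contradiction that $\psi$ does not extend to $G-e$. Choosing a minimal subgraph of $G-e$ containing $C$ to which $\psi$ does not extend, we obtain a $C$-critical subgraph $H$ of $G-e$ with $C\subseteq H$. Since $H$ has outer face a $9$-cycle, $S(H)\in\Ss_9=\{\{5\},\{7\},\{5,6\},\{5,5,5\}\}$.

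Next I would record how the three $5$-faces of $G$ are distributed among the faces of $H$. Since $G$ has girth $4$ with every internal face having length $4$ or $5$, and every non-facial $8^-$-cycle $K$ of $G$ bounds a $K$-critical subgraph, Lemma~\ref{lem:inside} together with $\Ss_6=\{\emptyset\}$ and $\Ss_7=\{\{5\}\}$ yields: every $5$-cycle of $G$ bounds a $5$-face; the closed disk bounded by a non-facial $6$-cycle of $G$ contains only $4$-faces; and the closed disk bounded by a non-facial $7$-cycle of $G$ contains exactly one $5$-face of $G$. As a $4$-face of $H$ is bounded by a $4$-cycle of $G$, which is facial in $G$ (there is no non-facial $4$-cycle), it coincides with a $4$-face of $G$ and contains no $5$-face of $G$ in its interior; likewise the outer face of $H$ coincides with that of $G$. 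Consequently each of the three $5$-faces of $G$ lies in the closed disk of a unique internal $5^+$-face of $H$, where a $5$-face, a $6$-face, and a $7$-face of $H$ account for exactly one, zero, and one $5$-faces of $G$ respectively, and $H$ has no $8$-face.

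Finally I would finish by case analysis on $S(H)$. If $S(H)\in\{\{5\},\{7\},\{5,6\}\}$, then the internal $5^+$-faces of $H$ can jointly contain at most one $5$-face of $G$, contradicting that $G$ has three. If $S(H)=\{5,5,5\}$, then the three $5$-faces of $H$ must coincide with the three $5$-faces of $G$, all remaining internal faces of $H$ coincide with $4$-faces of $G$, and the outer face of $H$ coincides with that of $G$; hence every face of $H$ coincides with a face of $G$, which is impossible since $e\in E(G)\setminus E(H)$ would have to be drawn in the interior of some face of $H$. Either way we reach a contradiction, so $\psi$ extends to $G-e$, as required. As in Lemma~\ref{l56a}, the only part needing care is the face-containment bookkeeping recorded above — in particular, checking that each $5$-face of $G$ is assigned to a well-defined face of $H$ even though $H$ need not contain every vertex of $G$ — and this is handled exactly via Lemma~\ref{lem:inside}; I do not expect any substantial obstacle beyond that.
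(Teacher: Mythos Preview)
Your proposal is correct and follows essentially the same argument as the paper's proof: reduce to a $C$-critical subgraph $H$ of $G-e$, use $\Ss_9$ to list $S(H)$, then use the hypothesis on non-facial $8^-$-cycles (via Lemma~\ref{lem:inside}) to count how many $5$-faces of $G$ each face of $H$ can swallow, forcing $S(H)=\{5,5,5\}$ and hence $H=G$. Your write-up is in fact a bit more explicit than the paper's about why $4$-faces of $H$ must be $4$-faces of $G$ and why $H=G$ follows once all faces coincide; the paper compresses this into the single line ``Hence, $H$ contains three odd faces, and the only option for the multiset of $5^+$-faces of $H$ is $\{5,5,5\}$. That would mean that $G$ is the same graph as $H$.''
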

\begin{proof}
Let $e\in E(G)\setminus E(C)$. 
We want to show that $\psi$ extends to a proper $3$-coloring of $G-e$.
Suppose that $\psi$ does not extend to a $3$-coloring of $G-e$. 
Then there exists a $C$-critical subgraph $H$ of $G-e$, such that the $3$-colorings of $C$ that extend to $G-e$ are exactly the $3$-colorings of $C$ that extend to $H$.
Since $H$ is $C$-critical, its multiset of  $5^+$-faces is one of $\{5\}, \{7\}, \{5,6\}, \{5,5,5\}$.
Since all non-facial $8^-$-cycles $K$ in $G$ bound $K$-critical subgraphs, Lemma~\ref{lem:inside} implies that 
every $5$-face of $H$ is a $5$-face of $G$, 
every $7$-face of $H$ contains exactly one $5$-face of $G$, 
and a $6$-face of $H$ contains no $5$-faces in the interior.
Hence, $H$ contains three odd faces, and the only option for the multiset of $5^+$-faces of $H$ is $\{5,5,5\}$.
That would mean that $G$ is the same graph as $H$, and this is a contradiction.
\end{proof}

\begin{lemma}\label{lall5}
Let $G$ be a connected triangle-free plane graph with outer face bounded by a chordless $9$-cycle $C$. 
Moreover, let $G$ contain three 5-faces and let all other internal faces of $G$ be 4-faces.
If $G$ is $C$-critical, then $G$ is described by Theorem~\ref{thm:9cycleprecise}(e)
 and is depicted in Figure~\ref{fig-all-critical-graphs}(Bij) for some $i$ and $j$.
\end{lemma}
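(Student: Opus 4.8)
The plan is to mirror the structure of the proof of Lemma~\ref{l56b}, now with three $5$-faces $f_5^1,f_5^2,f_5^3$ instead of one $5$-face and one $6$-face. Since $G$ is $C$-critical, Lemma~\ref{lem:inside} guarantees that every non-facial $8^-$-cycle bounds a $K$-critical subgraph, so the structural hypotheses of the companion lemma (Lemma~\ref{lall5a}) are met and it suffices to pin down the shape of $G$. Fix a $3$-coloring $\psi$ of $C$ that does not extend. By symmetry (exchanging source and sink edges) we may assume $C$ has at least as many source edges as sink edges, so $\ns{} \in \{9,6\}$; moreover $m$ (the sum of $q$-values) must equal $\ns{}-\ntt \in \{9,3\}$, and with three faces each contributing $\pm 3$ the layouts are tightly constrained. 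As in the earlier proof I expect to first dispose of the case $\ns{}=9$: here the unique $\psi$-balanced layout has $q(f_5^i)=-3$ for all $i$, and one analyzes $K_0$ from Lemma~\ref{lemma5}. If $K_0$ is a cycle, then $|m|>|K_0|$ forces $|m|\in\{3,6,9\}$ against $\ell(\{5\})=5$, $\ell(\{5,5\})=\,?$ (the relevant bound $\ge 6$ from the $8$-face analysis, cf. Theorem~\ref{thm:8cycleprecise}), and $\ell(\{5,5,5\})=9$; and if $K_0$ is a path, splitting $C$ into $X,Y$ and combining Lemma~\ref{lemma5} with Lemma~\ref{lem:inside} should be contradictory, analogously to Lemma~\ref{clno9}. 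This yields a ``Claim~\ref{clno9}-type'' statement: $\psi$ must have $6$ source edges.

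With $\ns{}=6$, there are exactly three $\psi$-balanced layouts $q_1,q_2,q_3$, each assigning $q=-3$ to one chosen $5$-face and $q=+3$ to the other two (so $m=3$ in each). Applying Lemma~\ref{lemma5} to each $q_j$ produces a subgraph $K^{(j)}$; each $K^{(j)}$ is a path joining two distinct vertices of $C$ (a cycle is excluded since $|m|\le 9$ but $\ell$-bounds rule out the relevant interiors, and the chordless hypothesis gives $|K^{(j)}|\ge 2$). Label $K^{(j)}$ as an $(x_j,y_j;\,\cdot\,)$-cut. Now the three paths $K^{(1)},K^{(2)},K^{(3)}$ partition the edges of $C$ and cut $G$ into regions; using Lemma~\ref{lem:inside} each region is critical, so each region containing a single $5$-face has its boundary length $\ge 5$ and odd, each region with two $5$-faces has boundary $\ge$ (the $\{5,5\}$ bound) and even, the region with all three has boundary $\ge 9$, and each pairwise configuration of $K^{(i)},K^{(j)}$ has one of the five kinds $(00),(02),(20),(22),(11)$ from the Preliminaries. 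For the $(11)$ configurations I would invoke Lemma~\ref{lem:common} (the length bounds $\le 7$, and $\le 6$ on the shorter, should come out of a preliminary pass over the constraints exactly as in Lemma~\ref{l56b}) to split at a common point $v$ and reduce to subpaths $K_1,K_2,L_1,L_2$, etc.

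The combinatorial heart is then: enumerate the integer solutions to the resulting systems of linear (in)equalities — the unknowns being the numbers of source/sink edges on each of the $C$-segments and the lengths of the various path-pieces, subject to $\ns{}$-totals $6$ and $\ntt$-totals $3$, the $\ell$-bounds with parity, the chordless bounds $\ge 2$ (or $\ge 1$ at an internal common point), and the strict inequalities from Lemma~\ref{lemma5} applied to $q_1,q_2,q_3$. Each kind, and each way the three cuts interleave around $C$, is a separate small system; I would solve them by the same computer enumeration used elsewhere in the paper (programs available at \ourURL), then add the normalizing constraints (each face shares as little as possible with $C$) to extract the ``generic'' solutions, and finally observe that every solution is obtained from one of the depicted graphs $(Bij)$ in Figure~\ref{fig-all-critical-graphs} by identifying vertices. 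The main obstacle, and the reason for the computer search, is the sheer case explosion: three cuts around a $9$-cycle produce many more interleaving patterns and kind-combinations than the two-cut analysis of Lemma~\ref{l56b}, and for some interleavings one must also argue (as in the $(02)$ and $(20)$ claims there) that the region forced to contain all three $5$-faces plus extra $4$-faces makes the length constraints infeasible, killing those patterns outright. Verifying that no solution family was missed, and that the depicted $(Bij)$ list is exactly the set of generic solutions up to identification, is where the care goes; the individual linear systems are routine.
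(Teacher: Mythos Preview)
Your outline follows the right template but has one sign slip and, more importantly, misses the structural step that makes the paper's case analysis tractable.

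First the slip: with six source edges and three sink edges, a $\psi$-balanced layout needs $\nss+m=\ntt$, i.e.\ $m=-3$, so each layout assigns $+3$ to \emph{one} $5$-face and $-3$ to the other two (not the reverse).

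The real gap is what comes next. Applying Lemma~\ref{lemma5} to a layout $q_j$ gives a path $K^{(j)}$ splitting $G$ into two regions; the paper first proves (your Claim~\ref{cl555-AorB-2} analogue) that each side contains at least one $5$-face, so exactly one side contains a single $5$-face, the \emph{lonely} face $f_j$. The crucial dichotomy is then whether $q_j(f_j)=-3$ (type~A) or $q_j(f_j)=+3$ (type~B). If type~A could occur, the three lonely faces $f_1,f_2,f_3$ need not be distinct (two different layouts could isolate the same $5$-face), and the interaction of the three cuts is wild. The paper spends Claims~\ref{cl-AA} and~\ref{cl-AB} proving that type~A never happens: if $q_1$ were type~A, then the layout $q_2$ with $q_2(f_1)=+3$ can be neither type~A nor type~B, a contradiction. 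Only then do we know $q_j(f_j)=+3$ for each $j$, hence $f_1,f_2,f_3$ are the three distinct $5$-faces, and each $K^{(j)}$ is a $(u_j,v_j;f_j)$-cut with a well-defined orientation. This is what lets the subsequent analysis be organized by pairwise kinds (BB00), (BB11), (BB20), (BB22) of cuts with \emph{known} lonely faces.

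Your plan jumps straight to ``enumerate all interleavings of three cuts,'' but without the type~A elimination you do not control which face sits where relative to each cut, and your linear systems are underspecified; the case explosion you anticipate would be worse than you suggest and several of your parity/length constraints would not even be well-posed. There is also a subtlety you do not foresee: kind (BB20) cannot simply be ruled out by infeasibility---the paper shows it \emph{can} occur but that one of the two paths may be rerouted (using the detailed face structure forced by the fourteen solutions) to convert (BB20) into (BB11) without creating a new (BB20). Only after that reduction do the seven global patterns (B1)--(B7) exhaust the possibilities.
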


\begin{proof}
Let $G$ be a $C$-critical graph containing three $5$-faces.
Hence there is a proper 3-coloring $\psi$ of $C$ that does not extend to a proper 3-coloring of $G$. 
Without loss of generality, assume $C$ has more source edges than sink edges in the coloring $\psi$.
Either $C$ contains 9 source edges and no sink edges or $C$ contains $6$ source edges and $3$ sink edges. 

Given $i\in\{0, 1, 2, 3\}$, let $\ell_5(i)=\ell(S)$ where $S$ is a multiset of cardinality $i$ containing only elements 5. 
Observe that $\ell_5(0)=4, \ell_5(1)=5, \ell_5(2)=8$, and $\ell_5(3)=9$. 

\begin{claimn}\label{cl555-no9}
There are 6 source edges in $C$.
\end{claimn}
\begin{proof}
Suppose for a contradiction that there are 9 source edges.
Hence there is just one $\psi$-balanced layout $q$ assigning $-3$ to every $5$-face.
Let $K_0$ and $m$ be obtained from Lemma~\ref{lemma5}, which says $|m|>|K_0|$.
Let $k=|K_0|$.

Suppose $K_0$ is a cycle. When $i$ of the $5$-faces are in the interior of $K_0$, then $3i=|m|>k\geq \ell_5(i)$, which is a contradiction for all $i\in\{0, 1, 2, 3\}$. 

Therefore, $K_0$ is a path. Let $C$ be partitioned into paths $X$ and $Y$ that both have the same endpoints as $K_0$. 
Note that $\ns X+\ns Y=9$ and $\nt X+\nt Y=0$, which implies $\nt X=\nt Y=0$. 
Since $C$ is chordless, $k \geq 2$.
By symmetry assume that $X,K_0$ form a cycle that has $i\in\{0, 1\}$ of the three 5-faces in its interior.
Lemma~\ref{lemma5} implies that $|\ns X -3i| > k$ and $\ns Y + k \geq  \ell_5(3-i)$.
This set of equations gives a contradiction for all $i\in\{0, 1\}$.
\end{proof}

Hence $C$ contains 6 source edges and 3 sink edges. 
Let $q$ be a $\psi$-balanced layout, and we know that the three $5$-faces of $G$ are assigned $q$-values $3, -3, -3$. Notice there are three different $\psi$-balanced layouts. Let $K_0$ be obtained from Lemma~\ref{lemma5}.

\begin{claimn}\label{cl555-AorB-1}
$K_0$ is a path with both endpoints in $C$. 
\end{claimn}
\begin{proof}
Suppose for a contradiction that $K_0$ is a cycle.
Denote by $m$ the sum of the $q$-values of the faces in the interior of $K_0$.
Lemma~\ref{lemma5} implies that $|m|>|K_0|$.
When $i$ of the $5$-faces are in the interior of $K_0$, then $3i\geq |m|>|K_0|\geq \ell_5(i)$, which is a contradiction for all $i\in\{0, 1, 2, 3\}$.
\end{proof}

Claim~\ref{cl555-AorB-1} says that $K_0$ is a path.
Let $C$ be partitioned into paths $X$ and $Y$ that both have the same endpoints as $K_0$. 
Denote by $R_X$ and $R_Y$ the induced subgraph of $G$ whose outer face is bounded by $K_0, X$ and $K_0, Y$, respectively.

\begin{claimn}\label{cl555-AorB-2}
Each $R_X$ and $R_Y$ contains at least one 5-face.
\end{claimn}
\begin{proof}
Note that $\ns X+\ns Y=6$ and $\nt X+\nt Y=3$.
Without loss of generality, $R_X$ contains three $5$-faces.
Hence $\ns X+\nt X + k\geq 9$, and Lemma~\ref{lemma5} gives $|\ns X -3 -\nt X| > k$. This set of constraints has no solution which is a contradiction.
\end{proof}

\begin{figure}[ht]
\centering
\includegraphics{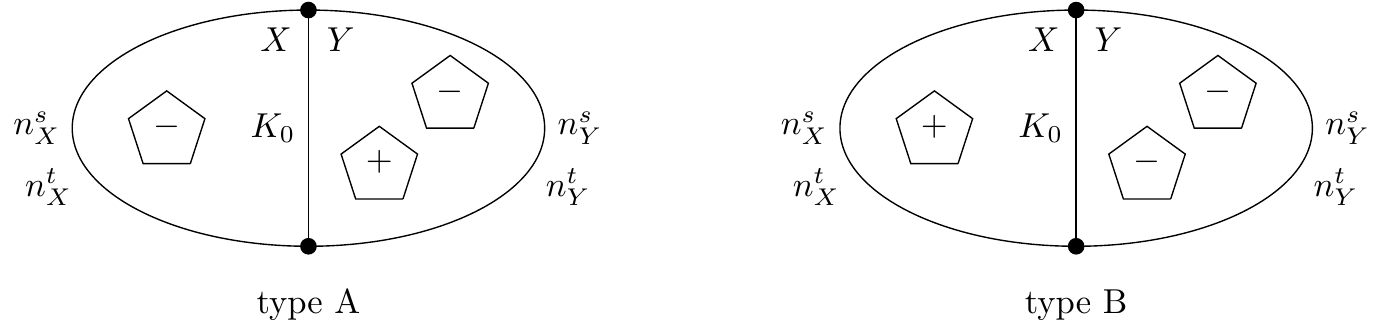}
\caption{When $C$ has 6 source edges and has three $5$-faces. The  possible configurations are of type A (left) and type B (right).}
\label{fig-555-flow3-cuts}
\end{figure}

By Claim~\ref{cl555-AorB-2} and by symmetry, we may assume that $R_X$ contains exactly one 5-face $f$; we call $f$ \emph{lonely} with respect to $K_0$.
If $q(f) = -3$, then we call this configuration \emph{type A} and if $q(f) = 3$, then we call it \emph{type B}; see Figure~\ref{fig-555-flow3-cuts}.

Denote the three different $\psi$-balanced layouts by $q_1$, $q_2$, and $q_3$.
For $i \in \{1,2,3\}$, let $K_i$ be $K_0$ obtained from Lemma~\ref{lemma5} when applied to $q_i$.
By Claim~\ref{cl555-AorB-2}, we can define $f_i$ to be the lonely face for $K_i$.
Notice that $f_1$, $f_2$, and $f_3$ are not necessarily pairwise distinct faces.
Label the endpoints of $K_i$ by $u_i$ and $v_i$ such that $K_i$ is a $(u_i,v_i,f_i)$-cut.
Define $k,l,m$ to be the length of $K_1, K_2, K_3$, respectively.

First we show that configurations of type A do not exist.

\begin{claimn}\label{cl-AA}
Let $q_1$ be a configuration of type A and let $q_2$ be a layout where $q_2(f_1)=3$. Then $q_2$ is not a configuration of type A.
\end{claimn}
\begin{proof}
Suppose for a contradiction that both $q_1$ and $q_2$ give a configuration of type A,  so $q_2(f_1)=3$ and $q_2(f_2)=-3$.
Since  $q_2(f_1)=3$, and $q_2(f_2)=-3$, we have that $f_1$ and $f_2$ are distinct.
Let $f_0$ be the third 5-face.

By symmetry, paths $K_1$ and $K_2$ give one of  four possible kinds (11), (00), (22),  and (20).
The kind (02) is symmetric with (20).
For an illustration, see Figure~\ref{fig-555_cuts_AA}.

Suppose $K_1$ and $K_2$ are of kind (11). 
The situation is depicted in Figure~\ref{fig-555_cuts_AA}~(AA11).
Let $X,A,Y,Z$ be $C(u_2, u_1; v_2, v_1),C(u_1,v_2;v_1,u_2),C(v_2, v_1; u_2, u_1),C(v_1, u_2; u_1, v_2)$ respectively.
We obtain the following constrains that must be satisfied by using Lemma~\ref{lemma5} and Lemma~\ref{lem:inside}.
\begin{align}
|\ns X + \ns Z - \nt X -\nt Z| & > k_1+k_2 \label{555-aa-left-flow1}\\
|\ns Y + \ns Z - \nt X -\nt Y| & > l_1+l_2  \label{555-aa-left-flow2}\\
\Sum X+\Sum A + l &\geq 7 \text{ and odd}  \label{555-aa-7l}\\
\Sum Y+\Sum A + k & \geq7 \text{ and odd}  \label{555-aa-7k}\\
\Sum X+\Sum Y + k + l &\geq 10 \label{555-aa-10}
\end{align}
Inequalities~\eqref{555-aa-left-flow1} and \eqref{555-aa-left-flow2} follow from Lemma~\ref{lemma5}. Inequalities~\eqref{555-aa-7l},  \eqref{555-aa-7k}, and \eqref{555-aa-10} follow from Lemma~\ref{lem:inside} and the structure of the (11) kind.

Suppose $K_1$ and $K_2$ are of kind (00).
The situation is depicted in Figure~\ref{fig-555_cuts_AA}~(AA00).
Let $X$ and $Y$ be $C(u_2, v_2; u_1, v_1)$ and $C(u_1,v_1;u_2,v_2)$ respectively.
Let $Z$ be edges of $C$ that are in neither $X$ nor $Y$.

\begin{figure}[ht]
\centering
\includegraphics[width=0.9\textwidth]{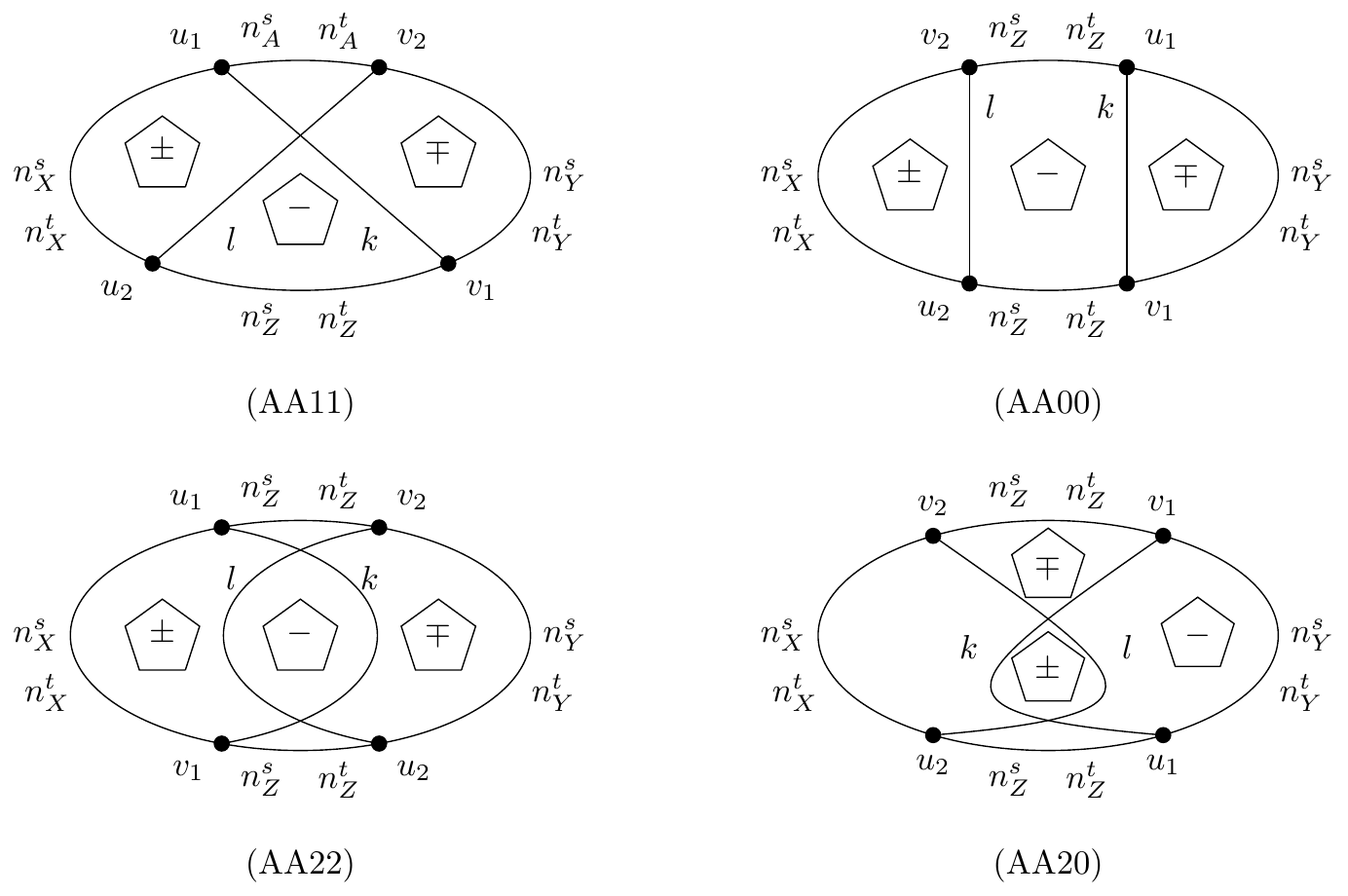}
\caption{Four different cases of two types A. Face $f_1$, $f_2$, and $f_3$ has symbol $\mp$, $\pm$, and $-$, respectively.}
\label{fig-555_cuts_AA}
\end{figure}

As in the previous case we obtain the following set of constraints that must be satisfied.
\begin{align}
 |\ns X + \ns Z - \nt X -\nt Z| & > k \label{555-aa-right-flow1}\\
 |\ns Y + \ns Z - \nt X -\nt Y| & > l  \label{555-aa-right-flow2}\\
 \Sum Y+k & \geq 5 \text{ and odd}\\
 \Sum X+l & \geq 5 \text{ and odd}
\end{align}
Inequalities~\eqref{555-aa-right-flow1} and~\eqref{555-aa-right-flow2} are obtained from Lemma~\ref{lemma5}.
The other inequalities come from Lemma~\ref{lem:inside}.
Recall that we assumed that $C$ has no chords, so we also include that $\min\{k,l\} \geq 2$.
The above set of constraints has no solution. 
Hence $K_1$ and $K_2$ cannot be of kind (00).

The next case (22) is depicted in Figure~\ref{fig-555_cuts_AA} (AA22).
Let $X$ and $Y$ be $C(v_1,u_1;v_1,u_2)$ and $C(v_1, u_2; v_1, u_1)$, respectively.
Let $Z$ be edges of $C$ that are in neither $X$ nor $Y$.

Using Lemmas~\ref{lemma5} and \ref{lem:inside} we obtain the following set of constraints that must be satisfied:
\begin{align*}
 |\ns X + \ns Z - \nt X -\nt Z| & > k \\
 |\ns Y + \ns Z - \nt X -\nt Y| & > l  \\
\Sum Y+  l & \geq 8 \text{ and even}\\
\Sum X+  k & \geq 8 \text{ and even}
\end{align*}
This system has no solution. This finishes the case (22) of Claim~\ref{cl-AA}.

The last case (20) is depicted in Figure~\ref{fig-555_cuts_AA}~(AA20).
Let $X$ and $Y$ be $C(u_2,v_2;v_1,u_1)$ and $C(v_1, u_1; u_2, v_2)$, respectively.
Let $Z$ be edges of $C$ that are in neither $X$ nor $Y$.
Using Lemmas~\ref{lemma5} and \ref{lem:inside} we obtain the following set of constraints that must be satisfied:
\begin{align*}
 |\ns Y - \nt Y| & > k \\
 |\ns X -3  - \nt X| & > l  \\
\Sum Y+  k &\geq 8 \text{ and even}\\
\Sum X+  l & \geq 5 \text{ and odd} \\
k +  l+\ns Z+\nt Z&\geq 10 
\end{align*}
The last equation was obtained from the fact that in the kind (20),
the subgraph of $G$ bounded by $K_1$, $K_2$, and $Z$ contains at least two $5^+$-faces.
This system has no solutions. This finishes the proof of Claim~\ref{cl-AA}.
\end{proof}

\begin{claimn}\label{cl-AB}
Let $q_1$ be a configuration of type A and let $q_2$ be a layout where $q_2(f_1)=3$. Then $q_2$ is not a configuration of type B.
\end{claimn}
\begin{proof}
Suppose for a contradiction that $q_1$ gives a configuration of type A and $q_2$ gives a configuration of type B, where  $q_2(f_1) = 3$, hence $f_1=f_2$.
We have four kinds depending on the order of the endpoints of $K_1$ and $K_2$.
The cases are depicted in Figure~\ref{fig-555_cuts_AB}. The kind (22) is not possible if $f_1=f_2$.

\begin{figure}[ht]
\centering
\includegraphics[]{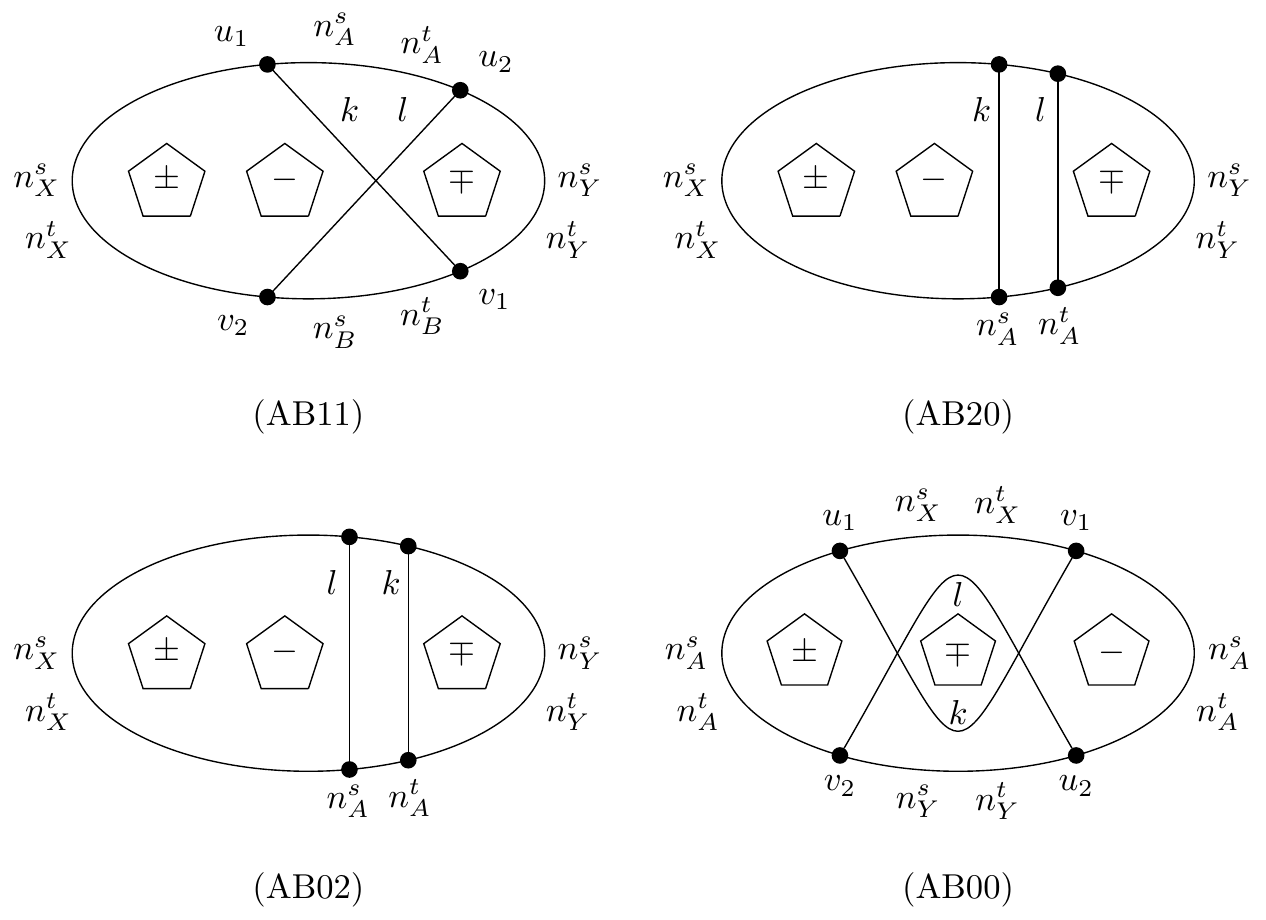}
\caption{Sketches of kinds (11), (20), (02), and (00) for one configuration of type A and one configuration of type B.}
\label{fig-555_cuts_AB}
\end{figure}

Depending on the case, 
from by Lemma~\ref{lemma5} and Lemma~\ref{lem:inside}.
 we obtain a set of constraints that must be satisfied.

\begin{itemize}
\item[(AB11):]
\begin{align*}
   |\ns X + \ns B - \nt X -\nt B| & > k  \\ 
   |\ns X + \ns A - 6 -\nt X -\nt A| & > l  \\ 
   \Sum Y+\Sum A +k &\geq 7 \text{ and odd}\nonumber\\
   \Sum X+\Sum Y+k+l & \geq 13 \text{ and odd} \nonumber
\end{align*} 
\end{itemize}

\begin{itemize}
\item[(AB20):]
\begin{align*}
| \ns X- \nt X| &> k \\ 
  | \ns Y + 3 - \nt Y| &> l \\ 
  \Sum Y+l &\geq 5 \text{ and odd}\nonumber\\
  \Sum X+k & \geq 8 \text{ and even} \nonumber
\end{align*}  
\end{itemize}

\begin{itemize}
\item[(AB02):]
\begin{align*}
   |\ns Y -3 -\nt Y| &> k \\ 
   |\ns X - 6 - \nt X| &> l  \\ 
  \Sum Y+k &\text{ is } \geq 5 \text{ and odd}\nonumber\\
  \Sum  X+l &\text{ is } \geq 8 \text{ and even} \nonumber
\end{align*}  
\end{itemize}

\begin{itemize}
\item[(AB00):]
\begin{align}
   |\ns X -3 -\nt X| &> k \nonumber \\ 
   |\ns Y +3 - \nt Y| &> l \nonumber \\ 
  \Sum X+k & \geq 5 \text{ and odd}\nonumber\\
  \Sum Y+l & \geq 5 \text{ and odd}\nonumber\\
\ns A+\nt A+k+l & \geq 13 \label{555-ab-00-last} 
\end{align}  
\end{itemize}

Inequality~\eqref{555-ab-00-last} comes from the fact that the subgraph bounded by $K_1$, $K_2$ and $A$ must contain all three 5-faces of $G$ in its interior faces.
In addition, we include that $\min\{k_1,k_2,l_1,l_2\}\geq 1$ since $v$ is not a vertex of $C$ and $\min\{k,l\} \geq 2$ since $C$ has no chords.

None of the four sets of constraints has any solution, which is a contradiction.
\end{proof}

By Claim~\ref{cl-AA} and Claim~\ref{cl-AB}, every layout gives a configuration of type B. 
Thus, we know that for each $i \in \{1,2,3\}$, $q_i(f_i) = 3$, and $f_1,f_2,f_3$ are pairwise distinct. 
Let $P_i$ be the subpath of $C$ such that $K_i$ and $P_i$ bound a cycle that contains $f_i$.

Let $i,j \in \{1,2,3\}$ and $i \neq j$.
Based on the order of $u_i,v_i,u_j,v_j$ on $C$, and $K_i$ and $K_j$ we get four
possible kinds (BB00), (BB11), (BB22), (BB20); see Figure~\ref{fig-555-cuts-BB}.
Note that (BB02) is symmetric to what would be (BB20).
\begin{figure}[ht]
\centering
\includegraphics[width=0.8\textwidth]{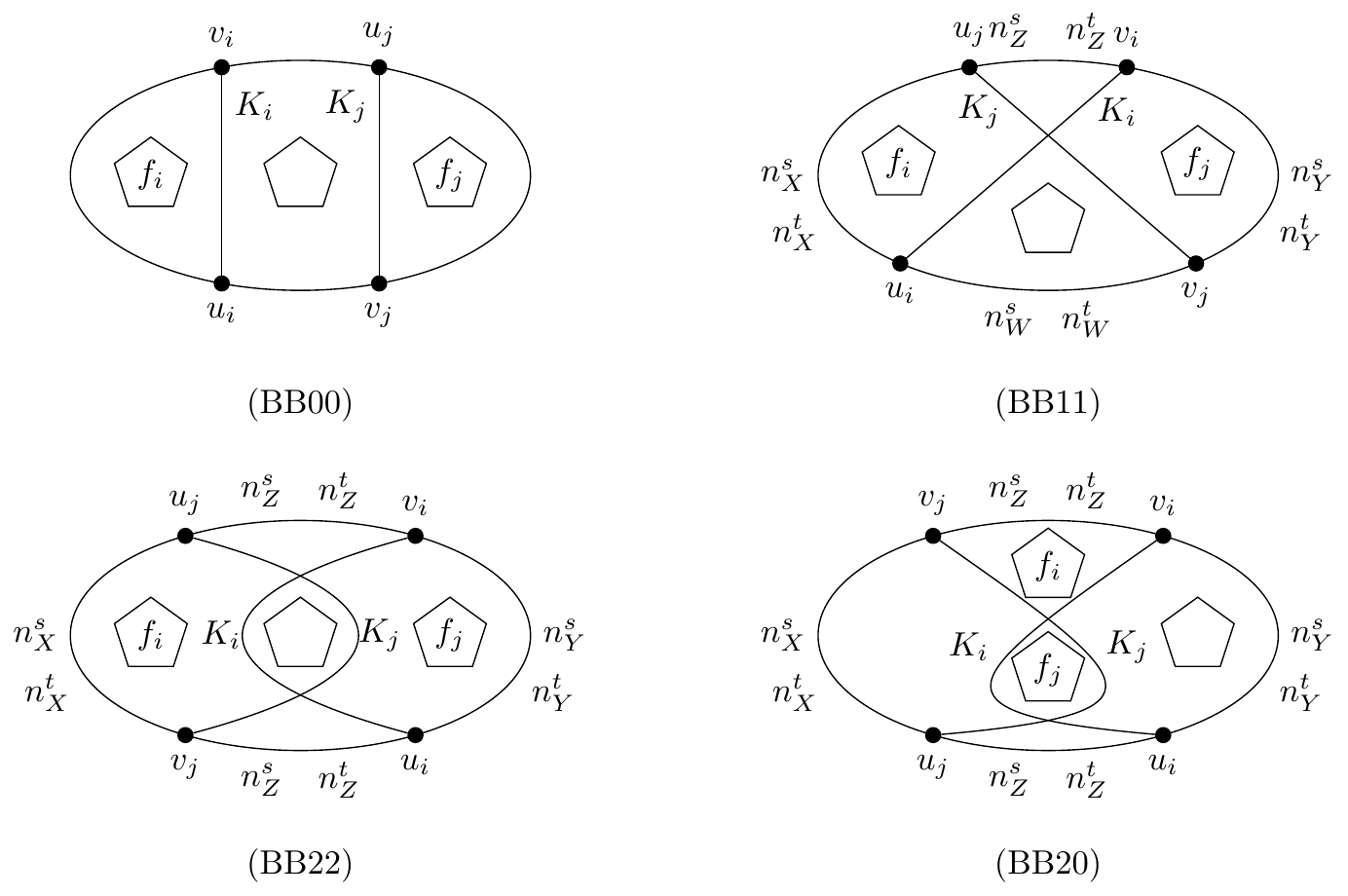}
\caption{Possible configurations of  two cuts of type $B$.}
\label{fig-555-cuts-BB}
\end{figure}

\begin{claimn}\label{cl-555-BB}
For all $i,j\in\{1,2,3\}$ and $i\neq j$ we get that $K_i$ and $K_j$ do not form
(BB22).
\end{claimn}
\begin{proof}
Suppose for a contradiction that $K_i$ and $K_j$ do form (BB22).
See Figure~\ref{fig-555-cuts-BB}~(BB22) for a sketch of the situation.
Let $X$ and $Y$ be $C(v_j, u_j; v_i,u_i)$ and $C(v_i, u_i; v_j,u_j)$, respectively.
Let $Z$ be the edges of $C$ that are in neither $X$ nor $Y$.

Let $t = 6-i-j$.
Since $K_i$ and $K_j$ form (BB22), the subgraph of $G$ bounded by $K_i \cup Y$
contains faces that contain 5-faces $f_j$ and $f_{t}$, 
and the subgraph of $G$ bounded by $K_j \cup X$ contains faces that contain 5-faces $f_i$ and $f_{t}$.
This, Lemma~\ref{lemma5}, and Lemma~\ref{lem:inside} give the following set of constraints.

\begin{align*}
   |\ns Y -6 -\nt Y| &> k_i \\
   |\ns X -6 -\nt X| &> k_j \\
   k_i + \Sum Y  & \geq 8 \text{ and even}\\
   k_j + \Sum X  & \geq 8 \text{ and even}
\end{align*}  
This set of constraints has no solution.
\end{proof}

\begin{claimn}\label{cl-555-BB}
For all $i,j\in\{1,2,3\}$ and $i\neq j$ we get that $K_i$ and $K_j$ do not form
(BB20) or there exist alternative paths that form (BB11) and no new (BB20) is created.
\end{claimn}
\begin{proof}
Suppose  for a contradiction that $K_i$ and $K_j$ form (BB20).
See Figure~\ref{fig-555-cuts-BB}~(BB20) for a sketch of the situation.
Let $X$ and $Y$ be $C(u_j, v_j; v_i,u_i)$ and $C(v_i, u_i; u_j,v_j)$, respectively.
Let $Z$ be the edges of $C$ that are in neither $X$ nor $Y$.

First we will obtain a few potential solutions. The first four inequalities follow from Lemmas~\ref{lemma5} and \ref{lem:inside}. The inequality \eqref{eq14} comes from the description of (BB20) where 
the subgraph of $G$ bounded by $K_i,K_j$, and $Z$ contains at least three interior faces where at least two are 5-faces. The inequality \eqref{eq7} comes from (BB20) saying that $X$ and $K_j$ do not form the boundary of $f_j$.
\begin{align}
   |\ns Y + 3 - \nt Y| &> k_i \nonumber\\
   |\ns X + 3 -\nt X| &> k_j \nonumber\\
   k_j + \Sum X  &\geq 5 \text{ and odd}\nonumber \\
   k_i + \Sum Y  &\geq 8 \text{ and even}\nonumber\\
   k_i+k_j+\Sum Z &\geq 14 \label{eq14}\\
   k_j+\Sum X &\geq 7 \label{eq7} 
\end{align}  
This set of constraints has the following four solutions.
{
\begin{center}
\setlength{\tabcolsep}{8pt}
\renewcommand{\arraystretch}{1.2}
\begin{tabular}{|c|c|c|c|c|c|c|c|}
\hline
$\ns X$ & $\nt X$ & $\ns Y$ & $\nt Y$ & $\ns Z$ & $\nt Z$ & $k_i$ & $k_j$ \\
\hline
3&0&0&3&3&0&7&4  \\ \hline
4&0&0&3&2&0&7&5  \\ \hline
5&0&0&3&1&0&7&6  \\ \hline
6&0&0&3&0&0&7&7  \\ \hline
\end{tabular}
\end{center}
}
Notice that in all the solutions $k_i+k_j+\Sum Z = 14$. 
Hence the subgraph of $G$ bounded by $K_i,K_j$, and $Z$ has two 5-faces and one 4-face.
We create a more detailed instance where we split $Z$ into two paths $C(v_j,v_i;u_j,u_j)$ that we
keep calling $Z$ and $C(u_i,u_j;v_j,v_i)$ that we call $W$. Moreover, we partition $K_i$ and $K_j$
into three subpaths of lengths $i_1,i_2,i_3$ and $j_1,j_2,j_3$ respectively.
See Figure~\ref{fig-555-cuts-BB20}.
 This leads to the following constraints, where the first six are the same as before.
 \begin{align}
   |\ns Y + 3 - \nt Y| &> k_i \nonumber\\
   |\ns X + 3 -\nt X| &> k_j \nonumber\\
   k_j + \Sum X  & \geq 5 \text{ and odd}\nonumber \\
   k_i + \Sum Y  & \geq 8 \text{ and even}\nonumber\\
   k_i+k_j+\Sum Z &\geq 14 \nonumber\\
   k_j+\Sum X &\geq 7  \nonumber\\
   i_1+j_2+i_3+ \Sum Y   & \geq 5 \text{ and odd}\\
   i_2+j_2  &\geq 5 \text{ and odd}\\
   i_1+j_1 + \Sum Z  & \geq 5 \text{ and odd}
\end{align}  
The system has 14 solutions. 
Create a path $K_j'$ from $K_j$ by dropping the piece corresponding to $j_3$
and replacing it by $i_3$ and potentially deleting repeated edges. The path $K_j'$ is a path with endpoints in $C$ and it makes $f_j$ lonely.
Moreover, all 14 solutions satisfy $|\ns X + \ns W + 3 -\nt X -\nt W| > j_1+j_2+i_3$. 
Hence $K_j'$ can be used instead of $K_j$, and $K_j'$ and $K_i$ form configuration (BB11).
\begin{figure}[ht]
\centering
\includegraphics{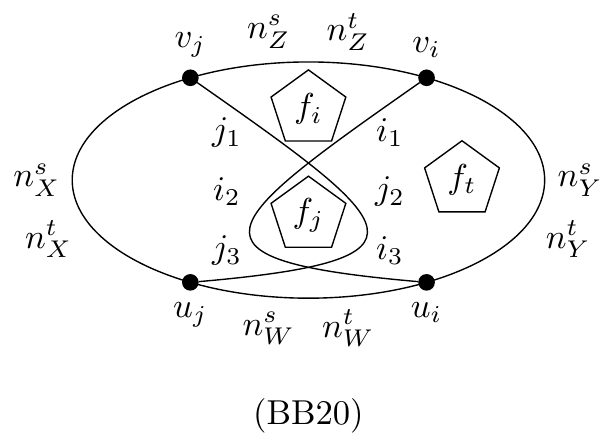}
\caption{More details for configuration (BB20).}
\label{fig-555-cuts-BB20}
\end{figure}

Finally, we need to show that no new (BB20) or (BB02) is created by replacing $K_j$ by $K_j'$.
All 14 solutions satisfy $i_3+j_3+\ns W+\nt W = 4$, $i_2+j_2=5$, $i_1+j_1+\ns Z +\nt Z = 5$, $\Sum Y = 3$, and $\Sum Z \leq 2$.
Hence the subgraph of $G$ induced by $W$, $Z$, $K_i$, and $K_j$ contains a 4-face and two 5-faces as internal faces and one of the 5-faces is sharing with $C$ vertices $v_j$ and $v_i$. 

Let $K_a$ and $K_b$ form (BB20) or (BB20) for some $a,b \in \{1,2,3\}$.
The previous paragraph implies that for any $c \in \{a,b\}$ 
one of the edges of $K_c$  incident to $v_c$ and $u_c$ is incident to a 4-face and the
other edge is incident to a 5-face $f_a$ or $f_b$. Moreover, one of $f_a$ and $f_b$ is disjoint from $C$
and the other one is sharing at most two edges with $C$.

Let $t = 6-i-j$ and $K_t$ be in $\{K_1,K_2,K_3\}$ with endpoints $u_t$ and $v_t$.
Suppose for contradiction that a new (BB20) or (BB02) is created by replacing $K_j$ by $K_j'$.
Since $K_i$ is not changed, the new (BB20) or (BB02) is formed by $K_j'$ and $K_t$.
Hence $K_j$ and $K_t$ is neither (BB20) nor (BB02).
The new  (BB20) or (BB02) must satisfy the constraints from the previous paragraph.
Since the edge of $K_j'$ incident to $v_j$ is incident to a 4-face and $f_i$,
the edge $e$ of $K_j'$ incident to $u_i$ must be incident to $f_t$.
Notice that $e$ is also incident to a 4-face $h$ that is incident to $W$. 
Hence $f_t$ must be on the opposite side of $e$ than $h$.
Let  $x \in \{u_t,v_t\}$ be incident to an edge of $K_t$ that is incident to $f_t$.
Since $\Sum Y = 3$ and $f_t$ is sharing at most two edges with $C$, we obtain that
$x \in Y$ and the order around $C$ is $u_iv_jx$. 
Since $K_j'$ and $K_t$ form (BB20) or (BB02) and we know the order for $x$, the order of the endpoints of $K_j'$ and $K_t$ is  $u_iv_jv_tu_t$. Hence $K_j'$ and $K_t$ form (BB02).
Observe that the order of endpoints of $K_j$ and $K_t$ is $u_jv_jv_tu_t$. Hence $K_j$  and $K_t$ form (BB02), a contradiction.
\end{proof}

\begin{claimn}\label{cl-555-BB11}
For all $i,j\in\{1,2,3\}$ and $i\neq j$ if $K_i$ and $K_j$ form (BB11) then they have a common point.
\end{claimn}
\begin{proof}
In order to apply Lemma~\ref{lem:common} we need to verify that 
$\max\{|K_i|,|K_j|\} \leq 7$ and if  $|K_i| = |K_j|=7$ then $K_i$ and $K_j$ have common endpoints.
Let $K_i$ and $K_j$ form (BB11), see Figure~\ref{fig-555-cuts-BB} (BB11) for illustration.
Let $X$, $Z$, $Y$, and $W$ be $C(u_i,u_j;v_i,v_j)$, $C(u_j,v_i;v_j,u_i)$, $C(v_i,v_j;u_i,u_j)$, and $C(v_j,u_i;u_j,v_i)$, respectively. 
Denote $|K_i|$ and $|K_j|$ by $k_i$ and $k_j$, respectively.
Lemma~\ref{lemma5} and Lemma~\ref{lem:inside} imply that the following constraints are satisfied.
 \begin{align}
   |\ns X + \ns Z + 3 - \nt X - \nt Z| &> k_i \nonumber\\
   |\ns Y + \ns Z + 3 - \nt Y - \nt Z| &> k_j \nonumber\\
   \Sum X + \Sum Z + k_i  & \geq 7 \text{ and odd}\nonumber \\
   \Sum Y + \Sum Z + k_j  & \geq 7 \text{ and odd}\nonumber
\end{align} 
All solutions to these constraints satisfy that $\max\{k_i,k_j\} \leq 7$. Moreover,
if $k_i=k_j=7$ then $\Sum X + \Sum Y = 0$. Hence  Lemma~\ref{lem:common} applies and
there is a common point.
\end{proof}

Now we know that we have only configurations (BB00) and (BB11) with common points.

Denote the length of the path $K_1$, $K_2$, and $K_3$ by $k$, $l$, and $m$, respectively.
We will use $k_1$, $k_2$, $k_3$ to denote the lengths of subpaths of $k$ if some of the paths form (BB11); $l_1$, $l_2$, $l_3$, $m_1$, $m_2$, $m_3$ will be used similarly. 
See Figure~\ref{fig-555-cuts-BBB}.

If there is a pair of layouts giving configuration (BB00), we distinguish the following cases:\begin{itemize}
\item[(B1)] all pairs form (BB00).
\item[(B2)] one pair  forms (BB11).
\item[(B3)] two pairs form (BB11)
\end{itemize}
If all three pairs of layouts give (BB11), then we define $v_K$ and $v_L$ to be the common point of $K_3$ with $K_1$ and $K_2$, respectively.
The vertex $v_K$ is \emph{before} $v_L$ if $v_K$ appears before $v_L$ when
traversing the cycle formed by $K_3$ and $P_3$ in the clockwise order and the starting
point is on $C$.
\begin{itemize} 
   \item[(B4)] $P_1$, $P_2$, and $P_3$ have a common edge and $v_K$ is before $v_L$ or $v_K = v_L$.
   
   \item[(B5)] There is no common edge of $P_1$, $P_2$, and $P_3$ and  $v_K$ is before $v_L$ or $v_K=v_L$.
   \item[(B6)] There is no common edge of $P_1$, $ P_2$, and $P_3$, and  $v_L$ is before $v_K$ and $v_K \neq v_L$
   \item[(B7)] $P_1$, $P_2$, and $P_3$ have a common edge and $v_L$ is before $v_K$ and $v_K \neq v_L$.
   \end{itemize}
See Figure~\ref{fig-555-cuts-BBB} for an illustration of the cases (B1)--(B7).
Since one layout may contain several different configurations of type B, pick  $K_1$, $K_2$, $K_3$ such that
the number of (B11) pairs is minimized.

\begin{figure}[ht]
\centering
\includegraphics[width=0.7\textwidth]{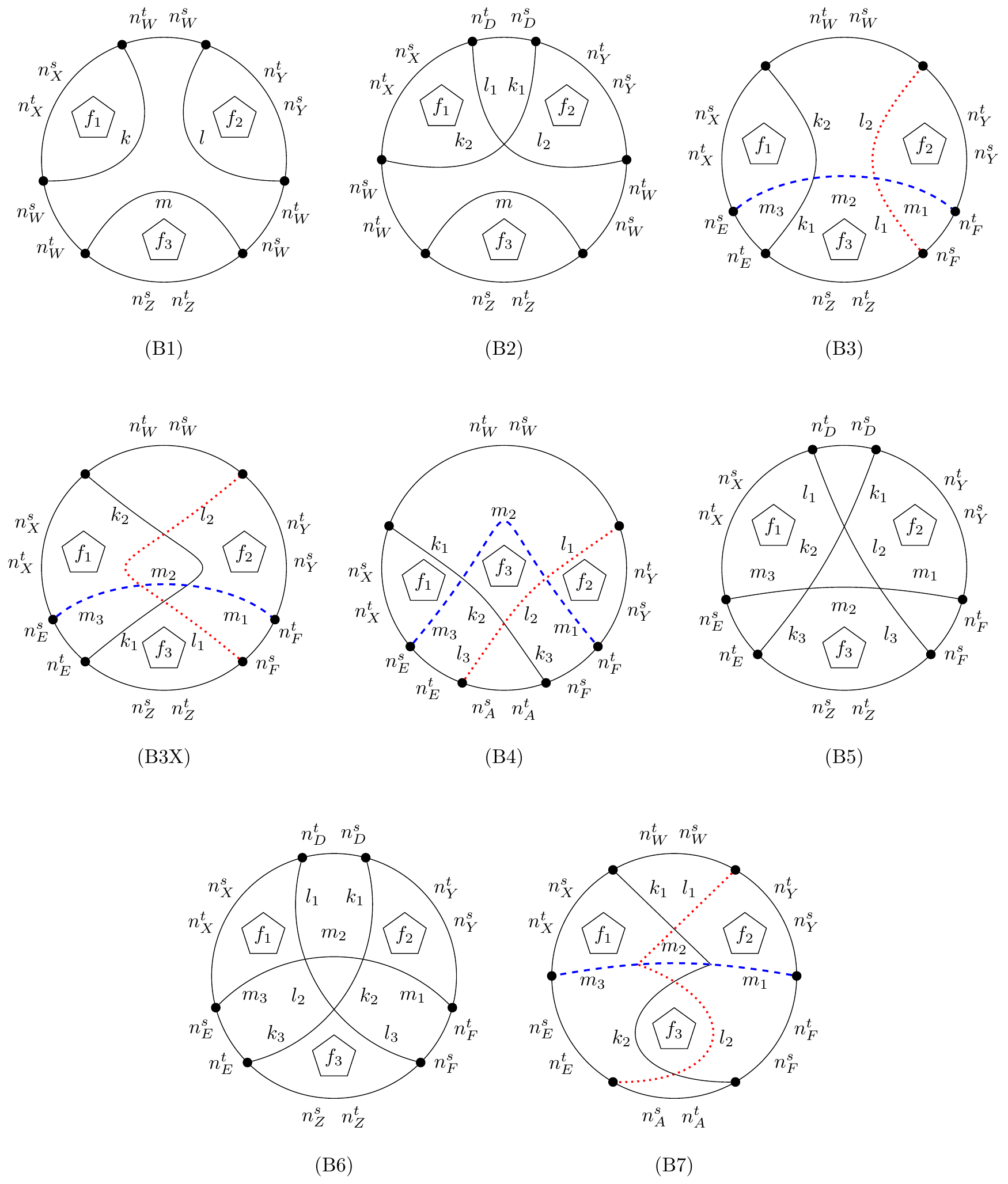}
\caption{Possible configurations of cuts $K_1$, $K_2$, $K_3$.}
\label{fig-555-cuts-BBB}
\end{figure}

Next we give constraints for each of the cases (B1)--(B7). Solutions to these constraints were obtained by simple computer programs.
Critical graphs obtained from (B$i$) are depicted in Figure~\ref{fig-all-critical-graphs} as (B$ij$) for all $i,j$.

Endpoints of $K_1$, $K_2$, and $K_3$ partition $C$ into several internally disjoint paths.
The paths have names in $\{X,Y,Z,W,A,D,E,F\}$ with exception of $W$ in (B1) and (B2),
where $W$ refers to a union of up to three and two paths, respectively.
To simplify the write-up we refer the reader to Figure~\ref{fig-555-cuts-BBB} for the labelings of the paths.

\begin{claimn}\label{cl-B1}
 The configuration (B1) results in a critical graph where every 5-face shares at least two edges with the boundary. 
Moreover, in every non-extendable $3$-coloring of the outer face, every 5-face contains two source edges.
\end{claimn}
\begin{proof}
 We refer the reader to Figure~\ref{fig-555-cuts-BBB} (B1) for the labelings of the paths.
 By Lemma~\ref{lemma5} we get the first three equations and by Lemma~\ref{lem:inside} we get the remaining equations.
\begin{align*}
   |\ns X + 3 -\nt X| &> k , \\
   |\ns Y + 3 -\nt Y| &> l , \\
   |\ns Z + 3 -\nt Z| &> m , \\
   k+\Sum X & \geq 5 \text{ and odd,}\\
   l+\Sum Y & \geq 5 \text{ and odd,}\\
   m+\Sum Z & \geq 5 \text{ and odd,}\\
\end{align*}  
In addition, we also include some constraints to break symmetry; for example $\nt X + \ns X \geq  \nt Y + \ns Y \geq \nt Z + \ns Z$.
All solutions to this system of equations are in Table~\ref{tab:solBBB1}. 
\begin{table}[ht]
\begin{center}
\setlength{\tabcolsep}{8pt}
\renewcommand{\arraystretch}{1.2}
\begin{tabular}{|c|c|c|c|c|c|c|c|c|c|c|}
\hline
$\ns X$ & $\nt X$ & $\ns Y$ & $\nt Y$ & $\ns Z$ & $\nt Z$ & $\ns w$ & $\nt W$ & $k$ & $l$ &  $m$\\
\hline
2&0&2&0&2&0&0&3&3&3&3\\
\hline
2&1&2&0&2&0&0&2&2&3&3\\
\hline
2&1&2&1&2&0&0&1&2&2&3\\
\hline
2&1&2&1&2&1&0&0&2&2&2\\
\hline
\end{tabular}
\end{center}
\caption{Solutions from Claim~\ref{cl-B1}.}\label{tab:solBBB1}
\end{table}
 By inspecting the solutions from Table~\ref{tab:solBBB1}, we conclude that they satisfy the statement of the claim.
\end{proof}

For the remaining cases, we give the sets of constraints but we skip detailed justification since they all come from the description of the configurations, Lemma~\ref{lemma5}, Lemma~\ref{lem:inside}, and the fact that $C$ has no chords.
We provide computer programs online for solving the sets of equations and to help with checking the solutions.

The description of the configuration is the following, see Figure~\ref{fig-555-cuts-BBB}.
If there is exactly one (B11) pair, then we get configuration (B2), where we assume it is pair $q_1$ and $q_2$.

If there are two (B11) pairs, then assume that $q_3$ is in both pairs. 
There are two common points on $K_3$, where one is shared with $K_1$ and the other is shared with $K_2$.
Depending on the order of these points we get either (B3) or (B3X).
The last option is that all three pairs are (B11).
By considering the order of the endpoints of $K_1,K_2,K_3$ and the order of the common points on $K_3$, we get (B4)--(B7).

\begin{claimn}\label{cl-B2}
 Configurations (B2)--(B7) result in critical graphs (B21)--(B52).
 Every graph in Figure~\ref{fig-all-critical-graphs} represents several graphs
 that can be obtained from the depicted graph by identifying edges and vertices
 and by filling every face of even size by a quadrangulation with no separating
 4-cycles. Moreover, the 5-faces in (B21) and (B22) that share two edges with $C$
 can be moved along $C$ as long as they stay neighboring with a region with three 
 sink edges.
\end{claimn}
\begin{proof}[Proof Outline:]
We slightly abuse notation and use $k_i,l_i,m_i$ for subpaths of $K_1,K_2,K_3$ respectively as well
as for lengths of these subpaths, where $i \in \{1,2,3\}$.
For a path in $\{X,Y,Z,W,A,D,E,F\}$, we use its lower case letter to denote its length.

For each case we include constraints that all three layouts give configurations of type $B$
using Lemma~\ref{lem:inside} and Lemma~\ref{lemma5} analogously to Claim~\ref{cl-B1}.
In addition, we add the following set of constraints depending on the case:
\begin{itemize}
\item[(B2):]
\begin{align*}
x + k_2 + l_1 &\geq 5 \text{ and odd} &
y + k_1 + l_2 &\geq 5 \text{ and odd}  \\
z+w+k_2+l_2 &\geq 7 \text{ and odd} \text{ if }w > 0 &
x+d+y+k_2+l_2 &\geq 8 \text{ and even} \\
x+y+z+w+l_1+k_1 &\geq 9 \text{ and odd} 
\end{align*}
\end{itemize}

\begin{itemize}
\item[(B3):]
\begin{align*}
e+x+k_1+k_2 &\geq 7  &
f+y+l_1+l_2 &\geq 7 \\
y+m_1+l_2 &\geq 5 \text{ and odd} &
z+k_1+m_2+l_1 &\geq 5 \text{ and odd} \\
x+m_3+k_2 &\geq 5 \text{ and odd} &
y+z+f+k_1+m_2+l_2 &\geq 8 \text{ and even} 
\end{align*}
\end{itemize}

\begin{itemize}
\item[(B3X):]
\begin{align*}
\min\{m_1,m_2,m_3,k_1,k_2,l_1,l_2\} &\geq 1 &  k_1+l_1+z  &\geq 6 \\
\text{if } l_2 = 1\text{ then } x+m_3+k_2 &\geq 6  &   y+m_1+m_2+l_2 &\geq 5 \text{ and odd} \\
\text{if } k_2 = 1\text{ then } y+m_1+l_2 &\geq 6 &  x+k_2+m_2+m_3 &\geq 5 \text{ and odd} 
\end{align*}
\end{itemize}

\begin{itemize}
\item[(B4):]
\begin{align*}
x+e+l_3+k_1+k_2&\geq 5  \text{ and odd} &
  y+f+k_3+k_2+m_2+l_1 &\geq 8  \text{ and even}  \\
k_2+l_2+m_2&\geq 5  \text{ and odd} &
f+y+w+x+m_3+k_2+k_3 &\geq 9  \text{ and odd}  \\
x+k_1+m_3 &\geq 5  \text{ and odd} &
e+x+k_1+m_2+l_2+l_3 &\geq 8  \text{ and even} \\
y+l_1+m_1 &\geq 5  \text{ and odd} &
y+w+x+e+l_3+l_2+m_1 &\geq 9  \text{ and odd} \\
& & f+y+w+x+e+l_3+k_3 &\geq 9  \text{ and odd}
\end{align*}
\end{itemize}

\begin{itemize}
\item[(B5):]
\begin{align*}
 \min\{k_2,l_2,m_2\} &\geq 1 \text{ or }k_2=l_2=m_2 = 0
\end{align*}
\begin{align*}
 y+k_1+l_2+m_1 &\geq 5  \text{ and odd} &
 x+l_1+k_2+m_3 &\geq 5  \text{ and odd} \\
 z+k_3+m_2+l_3 &\geq 5  \text{ and odd} &
 y+f+l_3+l_2+k_1  &\geq 7  \text{ and odd} \\
 x+e+k_3+k_2+l_1  &\geq 7  \text{ and odd} &
 m_1+m_2+k_3+f+z &\geq 7  \text{ and odd} \\
 m_3+m_2+l_3+e+z &\geq 7  \text{ and odd} &
 l_1+l_2+m_1+d+y  &\geq 7  \text{ and odd}
\end{align*}
\end{itemize}

\begin{itemize}
\item[(B6):]
\begin{align*}
m_2 &\geq 1 &
 y+k_1+m_1 &\geq 5  \text{ and odd} \\
 x+l_1+m_3 &\geq 5  \text{ and odd} &
 z+k_3+l_3 &\geq 5  \text{ and odd}
\end{align*}
\end{itemize}

\begin{itemize}
\item[(B7):]
\begin{align}
k_1+l_1+m_1+m_3+x+y &\geq 10  \label{eq7:a} \\
e+x+w+y+f+l_2+k_2-5 &\geq 9 \label{eq7:b}
\end{align}
\end{itemize}

We enumerated all solutions to all seven sets of constraints, and we checked that the resulting graphs are depicted in Figure~\ref{fig-all-critical-graphs}.
In order to eliminate mistakes in computer programs, we have two implementations
by different authors and we checked that they give identical results.
Sources for programs for cases (B2)--(B7) together with their outputs
can be found on arXiv and at \ourURL.

\begin{figure}
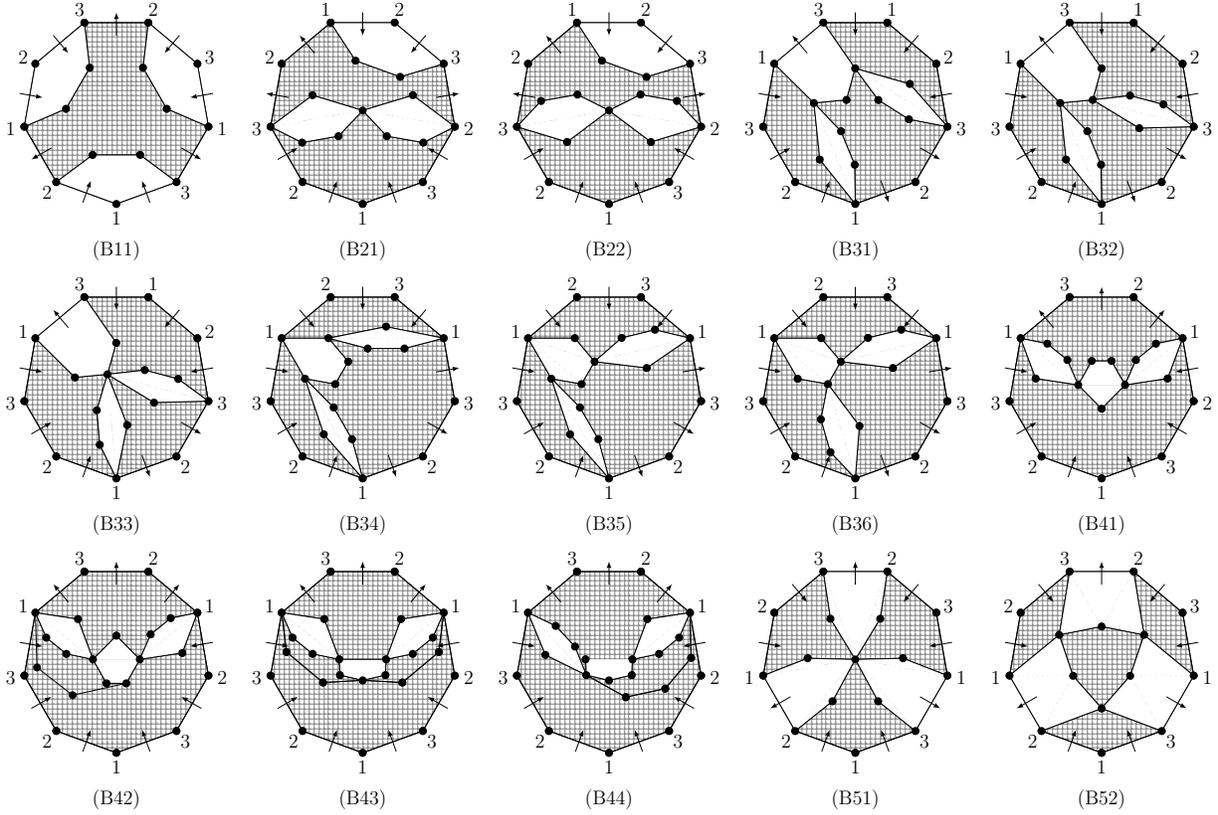

\begin{center}
\includegraphics[width=\tempL\textwidth,page=1]{fig-criticals/fig-555-critical}
\includegraphics[width=\tempL\textwidth,page=2]{fig-criticals/fig-555-critical}
\includegraphics[width=\tempL\textwidth,page=3]{fig-criticals/fig-555-critical} 
\includegraphics[width=\tempL\textwidth,page=7]{fig-criticals/fig-555-critical}
\includegraphics[width=\tempL\textwidth,page=8]{fig-criticals/fig-555-critical}

\includegraphics[width=\tempL\textwidth,page=9]{fig-criticals/fig-555-critical}
\includegraphics[width=\tempL\textwidth,page=4]{fig-criticals/fig-555-critical} 
\includegraphics[width=\tempL\textwidth,page=5]{fig-criticals/fig-555-critical}
\includegraphics[width=\tempL\textwidth,page=6]{fig-criticals/fig-555-critical}
\includegraphics[width=\tempL\textwidth,page=10]{fig-criticals/fig-555-critical}

\includegraphics[width=\tempL\textwidth,page=11]{fig-criticals/fig-555-critical}
\includegraphics[width=\tempL\textwidth,page=12]{fig-criticals/fig-555-critical}
\includegraphics[width=\tempL\textwidth,page=13]{fig-criticals/fig-555-critical}
\includegraphics[width=\tempL\textwidth,page=15]{fig-criticals/fig-555-critical}
\includegraphics[width=\tempL\textwidth,page=14]{fig-criticals/fig-555-critical}
\end{center}
\caption{All solutions to cases (B1)--(B7).
}
\label{fig-all-555-solutions}
\end{figure}

The most general solution for each of the sets of equations is depicted in Figure~\ref{fig-all-555-solutions}.  
Notice that (B3X), (B6), and (B7)  have no solutions.
In (B7), inequality \eqref{eq7:a} comes from a subgraph having two faces where each contains a 5-face in the interior and \eqref{eq7:b} comes from Lemma~\ref{lem:inside} and the $-5$ appears due to $k_2$ and $l_2$ enclosing $f_3$. 
Observe that (B34), (B35), and (B36) are
special cases of  (B41), (B42), and (B43), respectively. Hence we dropped (B34), (B35), and (B36) from Figure~\ref{fig-all-critical-graphs}. 
One can think of (B41), (B42), and (B43) as being obtained from (B34), (B35), and (B36) by duplicating a subpath $P$ of $C$  where all dual edges of $P$ are oriented inside. 
Notice that by using this operation, (B44) could be obtained from (B21), also (B22) from (B11) and (B41) from (B22). 
We suspect that it is part of a more general description of $C$-critical graphs, where $C$ is larger.

We think the case (B4) is the most complicated case.
We again used the trick to identify general solutions quickly by observing that regions bounding faces contain only the face
and obtained seven solutions. We include sketches of the solutions generated by our program in Figure~\ref{fig-B4}. Although there are
seven solutions, they give only four distinct cases due to some vertex identifications.

\def\tempL{0.22}
\begin{figure}
\begin{center}
\includegraphics[width=\tempL\textwidth,page=1]{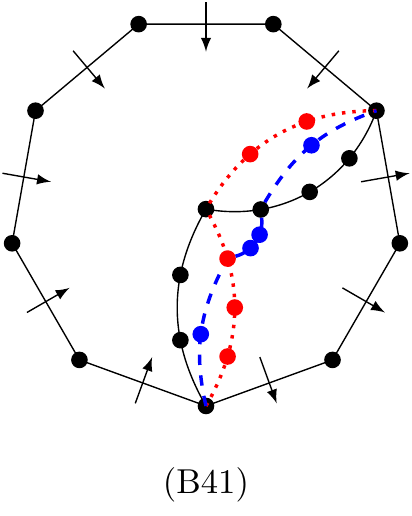}
\includegraphics[width=\tempL\textwidth,page=2]{fig-B4}
\includegraphics[width=\tempL\textwidth,page=3]{fig-B4}
\includegraphics[width=\tempL\textwidth,page=4]{fig-B4}
\includegraphics[width=\tempL\textwidth,page=5]{fig-B4}
\includegraphics[width=\tempL\textwidth,page=6]{fig-B4}
\includegraphics[width=\tempL\textwidth,page=7]{fig-B4}
\end{center}
\caption{Sketches of solutions to case (B4) generated by our program.  The style of paths $K_1$, $K_2$, and $K_3$ correspond to the style in Figure~\ref{fig-555-cuts-BBB}~(B4). 
}
\label{fig-B4}
\end{figure}

\end{proof}

This finishes the proof of Lemma~\ref{lall5}.
\end{proof}


\section{Acknowledgements}

We would like to thank Zden\v{e}k Dvo\v{r}\'{a}k for fruitful discussions and  
we are very grateful to anonymous referee who spotted numerous mistakes
in the paper and suggested simplification to the proofs.

 This work was supported by the European Regional Development Fund (ERDF), project NTIS - New Technologies for the Information Society,
 European Centre of Excellence, CZ.1.05/1.1.00/02.0090.

 The first author was supported by the National Research Foundation of Korea (NRF) grant funded by the Korea government (MSIP) (NRF-2015R1C1A1A02036398).

 The second and third authors were supported by project P202/12/G061 of the Grant Agency of the Czech Republic.

 The last author was supported by NSF grants DMS-1266016 and DMS-1600390.
 
 A preliminary version of this paper without a complete proof was published in proceedings on IWOCA 2014~\cite{cyclelncs}.
 
\bibliographystyle{abbrv}
\bibliography{9cyc}

\end{document}